\documentclass[12pt,reqno]{NumPDEsArticle}

\usepackage[utf8]{inputenc}
\usepackage[T1]{fontenc}
\usepackage[english]{babel}
\usepackage{csquotes}
\usepackage{enumitem}
\usepackage{amsmath,amssymb}
\usepackage{booktabs}
\usepackage{biblatex}
\usepackage{stmaryrd}
\usepackage[caption=false]{subfig}
\usepackage{moreverb}
\usepackage{orcidlink}
\usepackage{bm}
\usepackage[normalem]{ulem}
\usepackage{tkz-euclide}
\usepackage[caption=false]{subfig}
\usepackage{color, colortbl}
\usepackage{diagbox,graphicx}
\usepackage{subcaption}
\usepackage[export]{adjustbox}

\usepackage{NumPDEsMacros}

\addbibresource{literature.bib}
\setlength{\biblabelsep}{\labelsep}


\newcommand{\exact}{\star}
\newcommand{\coarse}{H}
\newcommand{\fine}{h}
\newcommand{\kk}{{\underline{k}}}
\newcommand{\elll}{{\underline\ell}}
\newcommand{\jj}{{\underline j}}

\newcommand{\RRR}{\mathcal{R}}

\newcommand{\dist}{{\rm d\!l}^2}
\newcommand{\Eta}{{\rm H}}



\newcommand{\Cmesh}{\const{C}{mesh}}
\newcommand{\Ccont}{C_{\rm cnt}}

\newcommand{\llin}{\const{\lambda}{lin}}
\newcommand{\thetamark}{\const{\theta}{mark}}
\newcommand{\qalg}{\const{q}{alg}}

\newcommand{\Ceng}{C_{\rm nrg}^\star}
\newcommand{\CCeng}{C_{\rm nrg}}
\newcommand{\qnrgs}{q_{\rm nrg}^\star}
\newcommand{\qnrg}{q_{\rm nrg}}

\definecolor{best}{HTML}{2ca02c}
\definecolor{row}{HTML}{1f77b4}
\definecolor{col}{HTML}{bcbd22}
\setlength{\tabcolsep}{5pt} 

\definecolor{pyBlue}{HTML}{1f77b4}
\definecolor{pyRed}{HTML}{d62728}
\definecolor{pyGreen}{HTML}{2ca02c}
\definecolor{pyOrange}{HTML}{ff7f0e}
\definecolor{pyPurple}{HTML}{9467bd}
\definecolor{pyYellow}{HTML}{bcbd22}
\definecolor{pyGrey}{HTML}{7f7f7f}
\tikzset{reference/.style={thick,dashed}}
\tikzset{meshStyle/.style={very thin, color = black!75!white, fill=none, line 
		join=round, 
		line cap=round}}
\usetikzlibrary{shapes.geometric, calc, arrows, angles, matrix, colorbrewer}



\title[Cost-optimal FEM for nonlinear elliptic PDEs]{Unconditional 
full linear convergence and optimal complexity of adaptive iteratively 
linearized FEM for nonlinear PDE\MakeLowercase{s}}


\keywords{adaptive finite element method, optimal convergence rates,
	cost-optimality, iterative linearization, inexact solver, full linear
	convergence}


\subjclass[2020]{41A25, 65N15, 65N30, 65N55, 65Y20}


\author{Ani Miraçi\orcidlink{0000-0003-4962-9662}}
\author{Dirk Praetorius\orcidlink{0000-0002-1977-9830}}
\author{Julian Streitberger\orcidlink{0000-0003-1189-0611}}


\email{\tt ani.miraci@asc.tuwien.ac.at \rm(corresponding author)}
\email{\tt dirk.praetorius@asc.tuwien.ac.at}
\email{\tt julian.streitberger@asc.tuwien.ac.at}


\thanks{This research was funded by the Austrian Science Fund (FWF) projects
\href{https://www.fwf.ac.at/en/research-radar/10.55776/F65}{10.55776/F65} (SFB
F65 ``Taming complexity in PDE systems''),
\href{https://www.fwf.ac.at/en/research-radar/10.55776/I6802}{10.55776/I6802}
(international project I6802 ``Functional error estimates for PDEs on unbounded
domains''), and
\href{https://www.fwf.ac.at/en/research-radar/10.55776/P33216}{10.55776/P33216}
(standalone project P33216 ``Computational nonlinear PDEs'').}


\begin{document}
\maketitle

\begin{abstract}
	We propose an adaptive iteratively linearized finite element method
	(AILFEM) in the context of strongly monotone nonlinear operators with energy 
	structure in Hilbert spaces. The approach combines adaptive mesh-refinement 
	with an energy-contractive linearization scheme (e.g., the 
	Ka{\v c}anov method) and a norm-contractive algebraic solver (e.g., an 
	optimal geometric multigrid method). Crucially, a novel parameter-free 
	algebraic stopping criterion is designed and we prove that it leads to a 
	uniformly bounded number of algebraic solver steps. Unlike available results 
	requiring sufficiently small adaptivity parameters to ensure even plain 
	convergence, the new AILFEM algorithm guarantees full R-linear convergence 
	for arbitrary adaptivity parameters. Thus, unconditional convergence is
	guaranteed.
	Moreover, for sufficiently small adaptive mesh-refinement parameter
	$\theta$ and linearization-stopping parameter $\llin$, the new adaptive
	algorithm guarantees optimal complexity, i.e., optimal convergence rates
	with respect to the overall computational cost and hence time.
\end{abstract}

\section{Introduction}
\label{section:introAILFEM}

\textbf{Model problem.} Given a bounded Lipschitz domain 
$\Omega  \subset \! \mathbb{R}^d $ for
$d \in \mathbb{N}$, a scalar continuously differentiable nonlinearity 
$\mu  \colon \! \R_{\ge 0} \to \R_{\ge 0}$, and
right-hand sides $f \in  L^2(\Omega)$,
$\boldsymbol{f} \in \bigl[L^2(\Omega)\bigr]^d$, we focus on the nonlinear
elliptic PDE
\begin{equation}\label{eq:nonlinear:strongform}
	-\div \big(\mu(| \nabla u^\star |^2) \, \nabla u^\star \big)
	=
	f - \div \boldsymbol{f}
	\text{ in } \Omega
	\quad \text{subject to} \quad
	u^\star = 0
	\text{ on } \partial\Omega.
\end{equation}
We suppose that the nonlinearity satisfies a growth condition
with
$0 \! < \! \alpha \! \le \! L/3$ as
\begin{equation}\label{eq:assumption-mu}
	\alpha (t-s)
	\le
	\mu(t^2) t - \mu(s^2) s
	\le
	\frac{L}{3} \, (t-s)
	\quad \text{for all }
	t \ge s \ge 0
\end{equation}
With $\XX \coloneqq H^1_0(\Omega)$ and its topological dual $\XX'$, we consider
the problem~\eqref{eq:nonlinear:strongform} in its weak form. To this end, we
define the nonlinear operator
\begin{equation}\label{eq:nonlinearity}
	\AA
	\colon
	\XX  \to  \XX'
	\quad \text{via }
	\AA u
	\coloneqq
	\dual{\mu(|\nabla u|^2)\nabla u }{\nabla(\cdot)}_{L^2(\Omega)}
	\text{ for } u \in \XX
\end{equation}
and note that $\AA$ is {\em strongly monotone} and {\em Lipschitz continuous}.
Thus, the main theorem on strongly monotone
operators~\cite[Theorem~25.B]{zeidler} yields existence and uniqueness of the
weak solution $u^\star \in \XX$ to~\eqref{eq:nonlinear:strongform} as well as
of the exact discrete solution $u^\star_{H} \in \XX_{H}$ to a Galerkin
formulation of~\eqref{eq:nonlinear:strongform} for any discrete subspace
$\XX_{H} \subset \XX$.

Though the abstract mathematical framework of this work does not cover, e.g.,
the $p$-Laplace operator, studying the current model problem represents an
important mandatory step, before more complex nonlinear problems can be tackled.
Indeed, scalar nonlinearities arise in problems modeling a myriad of
physical phenomena, including magnetostatics, plasticity, elasticity,
gas- and hydrodynamics.
As an example, a widely used model for fluid flows in porous soil
is the Richards equation \cite{Richards31}, which in its
stationary formulation is an elliptic PDE with a scalar nonlinear diffusion.
This finds impactful applications in water management and agriculture,
but also environmental protection by modeling $\text{CO}_2$ sequestration or
control of the propagation of subterranean pollutants.

\textbf{Review of existing literature.} Note that the discretization 
of~\eqref{eq:nonlinear:strongform}
still leads to a system of nonlinear equations 
that can hardly be solved exactly.
Moreover, solutions to~\eqref{eq:nonlinear:strongform} usually exhibit
singularities that spoil the convergence of classical numerical methods.
Adaptive iteratively linearized finite element methods (AILFEMs) follow the
structure SOLVE \& ESTIMATE -- MARK -- REFINE, with the
intertwined SOLVE \& ESTIMATE module consisting of a \textit{nested} 
linearization and
algebraic solver loop. In the context of strongly monotone nonlinear problems,
the rich literature indicates a broad interest in these methods including
different linearization schemes in, e.g.,~\cite{gmz2011,gmz2012,cw2017,
	ghps2018, hw2020, hw2020:conv, ghps2021, mv2023}.
Therein, while the nonlinearity is treated iteratively, the arising linear
systems of equations are nonetheless solved exactly. 
More precisely, \cite{gmz2012} proves convergence with quasi-optimal rate for 
an adaptive algorithm assuming the exact solution of the nonlinear discrete 
system, \cite{hw2020,hw2020:conv} show plain convergence
of adaptive iteratively linearized Galerkin methods. 
Finally, \cite{ghps2018, ghps2021} show rate-optimal convergence, i.e.,
optimal convergence rates with respect to the dimension of the discrete 
ansatz space. However, cost-optimal convergence cannot be guaranteed in these 
works since the iterative solution of the arising linear FE systems is neglected. 
Indeed, it is crucial to couple discretization, 
linearization, and optimal \textit{uniformly contractive} algebraic 
solvers.
This is key for the overall algorithm to achieve \textit{optimal complexity}, 
i.e., optimal convergence rates of the error
with respect to the overall computation time in
the following sense: If $u^\star$ can be approximated by FE functions at
rate $s>0$ with respect to the number of degrees of freedom, then the algorithm
provides FE approximations that converge to $u^\star$ at rate $s$ not only 
with respect to the number of degrees of freedom, but also with respect to the 
overall computational cost.
Originating from adaptive wavelet
methods~\cite{cdd2001,
cdd2003}, this concept was later adopted in the adaptive FEM context
in~\cite{stevenson2007} for the
Poisson model problem and in~\cite{cg2012} for a Poisson eigenvalue problem.
Therein, the theory requires that the algebraic error has to be bounded by the
discretization error multiplied by a
sufficiently small solver-stopping parameter, while numerical 
experiments indicate that also larger stopping
parameters work in practice~\cite{cg2012}.

In the quasi-linear context~\eqref{eq:nonlinear:strongform},~\cite{hpsv2021} 
proposes an adaptive
lowest-order finite element method combining linearization 
(Zarantonello iteration) with an
algebraic solver (optimally preconditioned conjugate gradient method) achieving
optimal complexity. The presented adaptive algorithm steers and equibalances
the various error components (discretization, linearization, and algebraic
error) via solver-stopping criteria using parameters $\lambda_{\rm lin} $ and
$\lambda_{\rm alg}$ for the linearization and algebraic iterations,
respectively. \cite{hpsv2021} shows full R-linear convergence of the adaptive
algorithm, i.e., contraction in every step of an appropriate quasi-error
equivalent to the sum of error components, through a perturbation argument. 
This comes at the expense of
full R-linear convergence requiring \(\lambda_{\textup{alg}}\) to be 
sufficiently small with respect to
\(\lambda_{\textup{lin}}\) and \(\lambda_{\textup{lin}}\) to be small with 
respect to the Dörfler marking parameter \(\theta\).
These restrictions on the parameters are weakened in the
recent own work~\cite{bfmps2025} thanks to a summability-based 
proof strategy needing only that the algebraic solver parameter 
$\lambda_{\rm  alg}$ and the product 
$\lambda_{\rm lin} \, \lambda_{\rm  alg}$ are sufficiently small. 
The parameter choice is thus often heuristical and 
depends on the experience of the user. Importantly, even plain convergence could 
fail if the adaptivity parameters are not chosen sufficiently small.

\textbf{Contributions and novelties.} This work presents a novel 
optimal adaptive finite element algorithm with parameter-free stopping criterion 
for the algebraic solver that allows for the
cost-optimal approximation of the solution to the quasi-linear 
PDE~\eqref{eq:nonlinear:strongform}. The numerical
experiments indicate the practical success of the novel stopping criterion 
for different linearization schemes and a variety of adaptivity parameters.
More precisely, the new adaptive algorithm allows us to:
\begin{itemize}[leftmargin=6mm]
	\item drive the adaptive algorithm by fewer parameters than earlier 
	contributions, namely only $\theta$ for the mesh-refinement and 
	$\lambda_{\rm lin}$ for the linearization-stopping;
	\item enforce algorithmically the inexact energy contraction
	of iterates obtained from 
	an energy-contractive linearization coupled with a norm-contractive 
	algebraic solver;
	\item guarantee algebraic solver termination with a uniformly 
	bounded number of iterations;
	\item ensure unconditional full R-linear convergence, i.e., 
	quasi-contraction of the quasi-error for any 
	parameters \(\theta > 0\) and $\lambda_{\rm lin} > 0$, thus overcoming 
	even the weakened parameter restriction of~\cite{bfmps2025};
	\item achieve optimal complexity for a sufficiently small choice of the 
	remaining two parameters $\theta$ and $\lambda_{\rm lin}$ via a 
	perturbation argument;
	\item develop an analysis in an 
	\textit{abstract framework} where the requirements on the nonlinearity and
the constituting modules (discretization, error estimator, linearization,
algebraic solver) are clearly identified -- from this point of view, 
the results are not 
limited to the specific model problem~\eqref{eq:nonlinear:strongform}.
\end{itemize}

\textbf{Outline.} In Section~\ref{sec:setting}, we 
introduce the abstract setting to treat the
nonlinear model problem. Section~\ref{section:algorithm} introduces the
adaptive algorithm and its main properties, namely, we show that the
new stopping criterion allows to: (i) 
link the energy of the iterates with their
respective norms; (ii) guarantee a uniform number of 
algebraic solver steps. In
Section~\ref{sec:full-linear convergence}, we present our first main result on
unconditional
full R-linear convergence: in particular, this results in an equivalence of
convergence rates with respect to the degrees of freedom and with respect to
the overall computational cost. Section~\ref{sec:optimality} presents 
the
second main result of optimal complexity.
Numerical
experiments in Section~\ref{sec:numerics} confirm the theoretical results and
further investigate the performance of the novel stopping criterion
for different linearization strategies and algebraic solvers.

\section{Setting}\label{sec:setting}

\subsection{Weak formulation and inherent energy structure}

Recall $\XX = H^1_0(\Omega)$, its topological dual space
$\XX^\prime = H^{-1}(\Omega)$, and the nonlinear operator
$\AA \colon \XX \to \XX^\prime$ from~\eqref{eq:nonlinearity} associated
to~\eqref{eq:nonlinear:strongform}. With
$\enorm{\cdot} := \norm{\nabla \cdot}_{L^2(\Omega)}$, it follows
from~\eqref{eq:assumption-mu} that the operator $\AA$ is indeed strongly
monotone and Lipschitz continuous, i.e., for all $u, v, w \in \XX$ there holds
\begin{equation}\label{def:assumptions_operator}
	\alpha \, \enorm{u - v}^2
	\le
	\dual{\AA u - \AA v}{u - v}_{\XX'\times\XX}
	\quad \text{and} \quad
	\dual{\AA u - \AA v}{w}_{\XX'\times\XX}
	\le
	L \, \enorm{u - v} \, \enorm{w}.
\end{equation}
Define the right-hand side functional
$
	F(v)
	\coloneqq
	\dual{f}{v}_{L^2(\Omega)} + \dual{\boldsymbol{f}}{\nabla v}_{L^2(\Omega)}
$
for any $v\in \XX$.
The main theorem on strongly monotone operators~\cite[Theorem~25.B]{zeidler}
applies to any closed subspace $\XX_H \subseteq \XX$ and yields existence and
uniqueness of the corresponding solution $u^\star_{H} \in \XX_{H}$ to
\begin{equation}\label{eq:exact_solution_disc}
	\dual{\AA u^\star_{H}}{v_{H}}_{\XX'\times\XX}
	=
	F(v_{H})
	\quad \text{for all } v_{H} \in \XX_{H}.
\end{equation}
For $\XX = \XX_H$, note that~\eqref{eq:exact_solution_disc} is the weak
formulation of~\eqref{eq:nonlinear:strongform} and $u^\star = u^\star_H$ is the
exact solution. For any closed subspace $\XX_H \subseteq \XX$, we stress the
Céa-type quasi-optimality
\begin{equation}\label{eq:cea}
	\enorm{u^\star - u_H^\star}
	\le
	\frac{L}{\alpha} \, \enorm{u^\star - v_H}
	\quad \text{for all } v_H \in \XX_H.
\end{equation}
For the scalar nonlinearity $\mu$
from~\eqref{eq:nonlinear:strongform}--\eqref{eq:assumption-mu}, the
PDE~\eqref{eq:nonlinear:strongform} is indeed equivalent to energy
minimization: With
\begin{equation}\label{eq:energy_definition}
	\EE (v)
	\coloneqq
	\frac{1}{2} \, \int_\Omega \int_0^{|\nabla v|^2} \! \!
	\mu(t) \, {\rm d}t \, {\rm d}x - F(v),
\end{equation}
the convex \emph{energy functional} is Gâteaux differentiable and
$\mathrm{d}\EE = 0$ is indeed equivalent to~\eqref{eq:nonlinear:strongform}.
Moreover, this also yields the equivalence
(see, e.g.,~\cite[Lemma~5.1]{ghps2018})
\begin{equation}\label{eq:energy}
	\frac{\alpha}{2} \, \enorm{v_{H} - u_{H}^\star}^2
	\le
	\EE(v_{H}) - \EE(u_{H}^\star)
	\le
	\frac{L}{2} \, \enorm{v_{H} - u_{H}^\star}^2
	\quad \text{for all }
	v_{H} \in \XX_{H}.
\end{equation}
In particular, the solution $u^\star_{H} \in \XX_{H}$
of~\eqref{eq:exact_solution_disc} is
indeed the unique energy minimizer
\begin{align}\label{eq:minimization}
	\EE(u^\star_{H}) = \min_{v_{H} \in \XX_{H}} \EE(v_{H}).
\end{align}
For later use, we define the energy difference
\begin{equation}\label{eq:distance}
	\dist(v_{H},w_{H}) \coloneqq \EE(w_{H}) - \EE(v_{H})
	\quad \text{for all } v_{H}, w_{H} \in \XX_{H},
\end{equation}
and stress that
\begin{align}\label{eq:pythagoras_discrete}
	\begin{split}
		&\dist(u^\exact_{H},v_{H})
		\eqreff{eq:energy}\ge
		0,
		\quad
		\dist(v_{H},w_{H}) = - \dist(w_{H},v_{H}),
		\quad \text{and} \\
		&\dist(v_{H}, w_{H})
		=
		\dist(v_{H},z_{H}) + \dist(z_{H},w_{H})
		\quad \text{for all } v_{H},w_{H},z_{H} \in \XX_{H}.
	\end{split}
\end{align}

\subsection{Discretization}

Let $\TT_{0}$ be a conforming simplicial mesh of $\Omega$ into compact
simplices $T$. For mesh-refinement, we employ newest vertex bisection;
see~\cite{AFFKP13} for $d=1$, \cite{stevenson2008} for admissible $\TT_{0}$
when $d \ge 2$,~\cite{kpp2013} for general initial mesh $\TT_{0}$ when $d=2$,
and the recent work~\cite{dgs2023} for general $\TT_{0}$ when $d \ge 2$. We
write $\TT_h = {\tt refine}(\TT_H, \MM_H)$ for the coarsest one-level refinement
of $\TT_H$, where at least all marked elements $\MM_H \subseteq \TT_H$ have
been refined, i.e., $\MM_H \subseteq \TT_H \backslash \TT_h$. Similarly,
$\TT_h \in \mathtt{refine}(\TT_H)$ denotes that the mesh $\TT_h$ can be
obtained from $\TT_H$ by finitely many refinement steps. In particular,
$\T \coloneqq \mathtt{refine}(\TT_0)$ is the set of all meshes that can be
generated from $\TT_0$. 
Similarly, for $N \in \N$, let $\TT_h \in \T_N (\TT_H)$ be the mesh 
obtained by refining $\TT_H$ with the property $\# \TT_h - \# \TT_H \le N $.
Finally, associate to each
$\TT_H \in \T$ the conforming finite element space
\begin{equation}\label{eq:discrete_space}
	\XX_\coarse \coloneqq \set{v_\coarse \in H^1_0(\Omega) \colon v_\coarse|_T
	\text{ is a polynomial of total degree} \le p
		\text{ for all } T \in \TT_\coarse},
\end{equation}
where $p \in \mathbb{N}$ is a fixed polynomial degree, leading to
$\XX_H \subseteq \XX_h \subsetneqq \XX$ whenever $\TT_H \in \T$ and
$\TT_h \in \mathtt{refine}(\TT_H)$.

\subsection{Iterative linearization}\label{sec:linearization}

Let $\TT_H \in \T$ and $\XX_H \subset \XX$ be the corresponding
finite-dimensional (and hence closed) subspace from~\eqref{eq:discrete_space}.
To treat the arising discrete nonlinear
formulation~\eqref{eq:exact_solution_disc}, we follow the framework
of~\cite{hw2020} and consider, for any linearization point $u \in \XX$, a
symmetric bilinear form $a(u;\cdot,\cdot)$ which is uniformly elliptic and
continuous, i.e., there exist $\Cell, \Ccont > 0$ such that
\begin{equation}\label{eq:linearization:bounds}
	\Cell \, \enorm{v}^2
	\le
	a(u; v, v)
	\quad \text{and} \quad
	|a(u; v, w)|
	\le
	\Ccont \, \enorm{v}  \, \enorm{w}
	\quad
	\text{for all $u, v, w \in \XX$.}
\end{equation}
Then, for any $u_{H} \in \XX_{H}$, the Lax--Milgram lemma guarantees existence
and uniqueness of $\Phi_{H}(u_{H}) \in \XX_{H}$ solving
\begin{equation}\label{eq:linearization:exact}
	a(u_{H}; \Phi_{H}(u_{H}), v_{H})
	=
	a(u_{H}; u_{H}, v_{H})
	+ \!
	\bigl[F(v_{H})- \dual{ \AA u_{H}}{v_{H}}_{\XX' \times \XX}\bigr]
	\ \text{for all } v_{H} \in \XX_{H}.
\end{equation}
Provided that there exists $\Ceng > 0$, independent of $\XX_{H}$ and
$u_{H} \in \XX_{H}$, such that
\begin{equation}\label{eq:coercive}
	\Ceng \enorm{\Phi_{H}(u_{H}) - u_{H}}^2
	\le
	\dist(\Phi_{H}(u_{H}), u_{H})
	\text{ \, for all } u_{H} \in \XX_{H},
\end{equation}
there holds \textit{energy contraction} in the sense of
\begin{equation}\label{eq:energy-contraction}
	0
	\le
	\dist(u_{H}^\exact, \Phi_{H}(u_{H}))
	\le
	\qnrgs \, \dist(u_{H}^\exact, u_{H})
	\text{ with }
	0
	<
	\qnrgs
	\le
	1 - \frac{2\Ceng}{L} \frac{\alpha^2}{\Ccont^2} < 1;
\end{equation}
see~\cite[Theorem~2.1]{hpw2021}. In particular, \eqref{eq:energy-contraction}
provides a means to estimate the \emph{linearization error} in the sense of
\begin{equation}\label{eqq:energy-contraction}
	0
	\le
	\frac{1 - \qnrgs}{\qnrgs} \, \dist(u_{H}^\exact, \Phi_{H}(u_{H}))  \!
	\eqreff{eq:energy-contraction}
	\le  \!
	\dist(\Phi_{H}(u_{H}), u_{H}) \!
	\eqreff{eq:pythagoras_discrete}
	\le
	\dist(u_{H}^\exact, u_{H}).
\end{equation}
We recall the following examples
from~\cite{hw2020,hw2020:conv,hpw2021} that fit this framework:

\subsubsection{\bf Zarantonello (or Picard) linearization.}
For a damping parameter $\delta > 0$, 
the Zarantonello iteration~\cite{zarantonello} employs
$a(u; v,w) \coloneqq \delta^{-1} \,\dual{\nabla v}{\nabla w}_{L^2 (\Omega)}$.
Then, the iteration \(\Phi_H \colon \XX_H \to \XX\)
from~\eqref{eq:linearization:exact} reads, for all $u_H, v_H \in \XX_H$,
\begin{align}\label{eq:zarantonello_iter}
\begin{split}
\!\!\!\dual{\nabla \Phi_H(u_H)}{\! \!\nabla v_H}_{L^2 (\Omega)}
	=
	& \ \dual{\nabla u_H}{\!\!\nabla v_H}_{L^2 (\Omega)}
	\\
	& \quad + \delta \big[F(v_H) - \dual{\AA u_H}{\!v_H}_{\XX' \times \XX} \big]
	\
	\text{for all}
	\
	u_H, \!v_H \in \XX_H.
\end{split}
\end{align}
Note that~\eqref{eq:linearization:bounds} holds with
$\Cell = \delta^{-1} = \Ccont$.
For $0 < \delta < 2/L$, there
holds~\eqref{eq:coercive} with $\Ceng = \delta^{-1} - L/2$;
see~\cite[Proposition~2.3]{hpw2021}. Moreover, for $0 < \delta < 2\alpha/L^2$,
\cite[Theorem~2.2]{hw2020} proves norm contraction
\begin{equation}
	0
	\le
	\enorm{u_H^\exact - u_H^{k+1,\exact}}
	\le
	\big[1 - \delta(2\alpha-\delta L^2)\big] \,
	\enorm{u_H^\exact - u_H^{k,\exact}}
	\quad \text{for all } k \in \mathbb{N}_0,
\end{equation}
which is used in the proof of the main theorem on strongly monotone
operators~\cite[Section~25.4]{zeidler} to show via the Banach fixed-point
theorem that~\eqref{eq:exact_solution_disc} admits a unique solution
$u^\exact_{H} \in \XX_{H}$ (even if $\AA$ has no potential).

\subsubsection{\bf{Ka{\v c}anov linearization.}}

Suppose additionally that \(\mu\) is monotonically decreasing,
i.e., $\mu'(t) \le 0$ for all $t \ge 0$.
The Ka\v{c}anov linearization~\cite{kacanov} exploits the multiplicative
structure of the nonlinearity
$
	\boldsymbol{A}(\nabla u_\coarse)
	=
	\mu(| \nabla u_\coarse |^2) \, \nabla u_\coarse
$.
Then, the iteration $\Phi_H \colon \XX_H \to \XX$
from~\eqref{eq:linearization:exact} reads
\begin{equation}\label{eq:kacanov_iter}
	\dual{\mu(| \nabla u_\coarse |^2) \, 
	\nabla \Phi(u_\coarse)}{\nabla v_H}_{L^2(\Omega)} = F (v_H)
	\quad
	\text{for all}
	\quad
	u_H, v_H \in \XX_H.
\end{equation}
Since there is no damping 
parameter $\delta$ to be fine-tuned by the user and using 
the growth condition~\eqref{eq:assumption-mu} with $s=0$ to establish the uniform
bounds
\begin{equation}\label{eq:kacanov_mu_bound}
	0 < \alpha
	\le
	\mu(| \nabla w_{H} |^2)
	\le
	L/3
	\quad \text{for all} \  w_{H} \in \XX_{H},
\end{equation} 
it holds that \eqref{eq:linearization:bounds} is fulfilled with 
$\Cell = \alpha$ and
$\Ccont = L/3$. Moreover, \eqref{eq:coercive} holds with $\Ceng = \alpha/ 2$;
see~\cite[Proposition~2.4]{hpw2021}.

\subsubsection{\bf{Damped Newton method.}}\label{sec:damped_newton}
As for the Kačanov linearization, suppose additionally that \(\mu\) is monotonically decreasing,
i.e., $\mu'(t) \le 0$ for all $t \ge 0$.
Within this framework, also a damped Newton
iteration (see, e.g.,~\cite{Deuflhard_04_Newton,hw2020}) can be applied. For a
damping parameter $\delta >0$, the iteration $\Phi_H \colon \XX_H \to \XX$
from~\eqref{eq:linearization:exact} then reads, for all \(u_H, v_H \in \XX_H\),
\begin{equation}\label{eq:newton_iter}
	\dual{[\mathrm{d}\AA u_H]\Phi_H(u_H)}{v_H}_{\XX' \times \XX}
	=
	\dual{[\mathrm{d}\AA u_H]u_H}{v_H}_{\XX' \times \XX}
	+ \delta \, \big[F(v_H) -\dual{\AA u_H }{v_H}_{\XX' \times \XX} \big],
\end{equation}
where
$
	\dual{[\mathrm{d}\AA u] v}{w}_{\XX' \times \XX}
	=
	\int_\Omega 2 \mu'(|\nabla u|^2) (\nabla u \cdot \nabla v)
	(\nabla u \cdot \nabla w) {\rm d}x
	+ \int_\Omega \mu(|\nabla u|^2) (\nabla v \cdot \nabla w) {\rm d}x
$
for all $u,v,w \in \XX$ denotes the Fréchet derivative of $\AA$.

Note that for $\delta = 1$, we obtain the standard Newton iteration.
A simple monotonicity study of the functions
$ \mu(t^2)t - \alpha t$ and $ \mu(t^2)t -  Lt/3$
together with the growth condition~\eqref{eq:assumption-mu}
allows to obtain the uniform bounds
\begin{align}\label{eq:newton_mu_bound}
	\alpha \le
	2\mu' (| \nabla w_\ell |^2)| \nabla w_\ell |^2
	+ \mu (| \nabla w_\ell |^2) \le L/3
	\quad \text{for all} \  w_\ell \in \XX_\ell.
\end{align}
In particular, this yields the uniform bounds

\begin{equation*}
	\alpha \,  \enorm{v}^2
	\le
	\dual{[\mathrm{d}\AA u ]v}{v}_{\XX' \times \XX}
	\ \text{and} \
	|\dual{[\mathrm{d}\AA u ]v}{w}_{\XX' \times \XX}|
	\le
	\frac{L}{3} \, \enorm{v}  \, \enorm{w}
	\ \text{for all $u, v, w \in \XX$.}
\end{equation*}
Given $ 0 < \delta_{\min}
\le \delta < \delta_{\max} = 2 \, \alpha / L$,
then~\eqref{eq:linearization:bounds} holds with
$\Cell = \alpha / \delta_{\max}$ and $\Ccnt =
L / (3 \, \delta_{\min})$.
Moreover, \eqref{eq:coercive} holds with
$\Ceng = \alpha / \delta_{\max} - L/2$; see \cite[Proposition~2.5]{hpw2021}.

\subsection{Algebraic solver}\label{sec:algebraic_solver}
The symmetric positive definite system arising
from~\eqref{eq:linearization:exact} is, in practice, too demanding to be solved
exactly. Therefore, we require the use of a \textsl{norm-contractive} algebraic 
solver, examples include optimally preconditioned conjugate gradient methods 
such as~\cite{cg2012,hpsv2021} or optimal multigrid methods 
as~\cite{wz2017,imps2022}. Denoting $k>0$ the linearization
counter, $j>0$ the algebraic solver counter, and $u_{H}^{k,0} \in \XX_{H}$ as
initial guess, one aims to approximate the unavailable
$u_{H}^{k,\star} \coloneqq \Phi_{H} (u_{H}^{k,0})$ by iteratively computing
$u_{H}^{k,j} = \Psi_{H}(u_{H}^{k,\star};u_{H}^{k,j-1})$, with the algebraic
solver step represented by the iteration mapping
$\Psi_{H}(u_{H}^{k,\star};\cdot) \colon \XX_{H} \to \XX_{H}$. We suppose that
there exists $0 < \qalg < 1$ independent of $\XX_H$ and $u_H^{k,0}$, such that
\begin{equation}\label{eq:alg:sol:contraction}
	\enorm{u_{H}^{k,\star} - u_{H}^{k,j}}
	\le
	\qalg \, \enorm{u_{H}^{k,\star} - u_{H}^{k,j-1}}
	\quad \text{for all } j > 0.
\end{equation}
In particular, the algebraic contraction~\eqref{eq:alg:sol:contraction} and the
triangle inequality imply that
\begin{equation}\label{eq2:alg:sol:contraction}
	\frac{1 - \qalg}{\qalg} \, \enorm{u_{H}^{k,\star} - u_{H}^{k,j}}
	\le
	\enorm{u_{H}^{k,j} - u_{H}^{k,j-1}}
	\le
	(1 + \qalg) \, \enorm{u_{H}^{k,\star} - u_{H}^{k,j-1}}.
\end{equation}
Consequently, $\enorm{u_{H}^{k,j} - u_{H}^{k,j-1}}$ provides a means to
estimate the {\em algebraic error} $\enorm{u_{H}^{k,\star} - u_{H}^{k,j}}$.
Admittedly, the strongest assumption is that the contraction factor $\qalg$
of~\eqref{eq:alg:sol:contraction} does not depend on the linearization point.
However, this is clear for the Zarantonello iteration (as $a(u; \cdot,\cdot)$
in~\eqref{eq:zarantonello_iter} is independent of $u$) and for the Ka{\v c}anov
iteration, where~\eqref{eq:kacanov_mu_bound} controls the smallest and largest
eigenvalue of the matrix $\mu(\cdot) I$ in~\eqref{eq:kacanov_iter}.

\subsection{Residual \textsl{a-posteriori} error estimator}

With the nonlinearity
$
	\boldsymbol{A}(\nabla v_\coarse)
	=
	\mu(| \nabla v_\coarse |^2) \, \nabla v_\coarse
$,
let \(\jump{\cdot}\) denote the jump over $(d-1)$-dimensional faces of $\TT_H$
and consider the residual error estimator~$\eta_\coarse(\cdot)$ defined, for
\(T \in \TT_\coarse\) and \(v_\coarse \in \XX_\coarse\), by
\begin{equation}\label{eq:definition_eta}
	\! \! \!
	\eta_\coarse(T\!, \! v_\coarse)^2 \!
	\coloneqq  \!
	|T|^{2/d }
	\Vert
	f \! + \! \div(\boldsymbol{A} (\nabla  v_\coarse)  \! - \! \boldsymbol{f})
	\Vert_{L^2(T)}^{2}
	\!	+ \!
	|T|^{1/d }
	\Vert
	\jump{(\boldsymbol{A} (\nabla v_\coarse) \! - \! \boldsymbol{f}) \cdot n}
	\Vert_{L^2(\partial T \cap \Omega)}^{2}. \! \! \! \! \!
\end{equation}
To abbreviate notation, we define, for all $\UU_\coarse \subseteq \TT_\coarse$
and all $v_\coarse \in \XX_\coarse$,
\begin{equation}
	\eta_\coarse(v_\coarse)
	\coloneqq
	\eta_\coarse(\TT_\coarse, v_\coarse)
	\quad \text{with} \quad
	\eta_\coarse(\UU_\coarse, v_\coarse)
	\coloneqq
	\Big( \sum_{T \in \UU_\coarse} \eta_\coarse(T, v_\coarse)^2 \Big)^{1/2}.
\end{equation}
Then the error estimator satisfies the following properties
from~\cite[Section~10.1]{axioms}.
\begin{proposition}[axioms of adaptivity]\label{prop:axioms}
Consider the model problem~\eqref{eq:nonlinear:strongform} with the scalar
nonlinearity \(\boldsymbol{A}(\nabla u) = \mu(| \nabla u|^2) \nabla u\). Under
the assumption~\eqref{eq:assumption-mu} and for $p=1$, there exist constants
$\Cstab, \Crel, \Cdrel, \Cmon > 0$ and $0 < \qred < 1$ such that the following
properties are satisfied for any triangulation $\TT_\coarse \in \T$ and any
conforming refinement $\TT_\fine \in \T(\TT_\coarse)$ with the Galerkin
solutions $u_\coarse^\star \in \XX_\coarse$, $u_\fine^\star \in \XX_\fine$
to~\eqref{eq:exact_solution_disc} and arbitrary $v_\coarse \in \XX_\coarse$,
$v_\fine \in \XX_\fine$.
\begin{enumerate}
	\renewcommand{\theenumi}{A\arabic{enumi}}
	\bf
	\item[(A1)]\refstepcounter{enumi}\label{axiom:stability} \textit{stability}.
	\quad \rm
	$
		|\eta_\fine(\TT_\fine \cap \TT_\coarse, v_\fine)
		- \eta_\coarse(\TT_\fine \cap \TT_\coarse, v_\coarse)|
		\le
		\Cstab \, \enorm{v_\fine - v_\coarse}
	$.
	\bf
	\item[(A2)]\refstepcounter{enumi}\label{axiom:reduction} \textit{reduction}.
	\quad \rm
	$
		\eta_\fine(\TT_\fine \backslash \TT_\coarse, v_\coarse)
		\le
		\qred \, \eta_\coarse(\TT_\coarse \backslash \TT_\fine, v_\coarse)
	$.
	\bf
	\item[(A3)]\refstepcounter{enumi}\label{axiom:reliability}
	\textit{reliability}. \quad \rm
	$
		\enorm{u^\star - u_\coarse^\star}
		\le
		\Crel \,\eta_\coarse(u_\coarse^\star)
	$.

	\renewcommand{\theenumi}{A3$^{+}$}
	\bf
	\item[(A3$^+$)]\refstepcounter{enumi}
	\label{axiom:discrete_reliability}
	\it
	\textbf{discrete reliability.}
	\quad
	\rm
	$
		\enorm{u_h^\star - u_H^\star}
		\le
		\Cdrel \, \eta_H(\TT_H \backslash \TT_h, u_H^\star).
	$

	\renewcommand{\theenumi}{QM}
	\bf
	\item[(QM)]\refstepcounter{enumi}\label{eq:quasi-monotonicity}
	\textit{quasi-monotonicity}. \quad \rm
	$\eta_\fine(u_\fine^\star) \le \Cmon \, \eta_\coarse(u_\coarse^\star)$.
\end{enumerate}
The constant $\Crel$ depends only on the uniform shape regularity of all meshes
$\TT_\coarse \in \T$, the monotonicity constant $\alpha$, and the dimension
$d$; whereas $\Cstab$ and $\Cdrel$ additionally depend on the polynomial degree
$p$. The constant $\qred$ reads $\qred \coloneqq 2^{-1/(2d)}$ for
bisection-based refinement in $\R^d$ and the constant $\Cmon$ can be bounded by
$\Cmon \le  1 + \Cstab \, \Cdrel$.
\qed
\end{proposition}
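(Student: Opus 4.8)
The plan is to reduce the five properties to the by-now standard verification of the residual-estimator axioms of~\cite[Section~10.1]{axioms}; the only ingredient beyond the linear case is the nonlinearity $\bfA(\nabla\cdot) = \mu(|\nabla\cdot|^2)\nabla(\cdot)$. The first step is therefore to upgrade the scalar growth condition~\eqref{eq:assumption-mu} to the vector field $\bfA\colon\R^d\to\R^d$, $\bfA(y)=\mu(|y|^2)\,y$: a short elementary computation (decompose $y-z$ into its components parallel and orthogonal to $z$ and apply~\eqref{eq:assumption-mu} with $t=|y|$, $s=|z|$; this is exactly where the factor $3$ and the constraint $\alpha\le L/3$ enter) shows that $\bfA$ is Lipschitz continuous, $|\bfA(y)-\bfA(z)|\le L\,|y-z|$, and strongly monotone, $(\bfA(y)-\bfA(z))\cdot(y-z)\ge\alpha\,|y-z|^2$, which is the pointwise counterpart of~\eqref{def:assumptions_operator}. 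Consequently $\|\bfA(\nabla v_\fine)-\bfA(\nabla v_\coarse)\|_{L^2(\omega)}\le L\,\enorm{v_\fine-v_\coarse}_{L^2(\omega)}$ for every measurable $\omega\subseteq\Omega$, and this single estimate replaces the linear-flux bounds in the classical argument. The restriction to $p=1$ is used here: it guarantees that $\bfA(\nabla v_\coarse)$ is $\TT_\coarse$-piecewise constant, so that $\div\bfA(\nabla v_\coarse)|_T=0$ and no inverse estimates for $\mu$ are needed in the volume residual.

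For \textbf{(A1)}, on an element $T\in\TT_\fine\cap\TT_\coarse$ the local mesh-size $|T|$ coincides in both meshes, and since $\bfA(\nabla v_\fine)$ and $\bfA(\nabla v_\coarse)$ are elementwise constant the volume residual $f-\div\bff$ does not depend on the discrete function; hence only the jump terms differ. Using the triangle inequality in $\ell^2(\TT_\fine\cap\TT_\coarse)$ and, on each face, a scaled trace estimate $|T|^{1/(2d)}\,\|w\cdot n\|_{L^2(\partial T\cap\Omega)}\lesssim\|w\|_{L^2(T)}$ (valid for elementwise constant $w=\bfA(\nabla v_\fine)-\bfA(\nabla v_\coarse)$ by shape regularity) together with the Lipschitz bound, one arrives at a bound by $L\,\enorm{v_\fine-v_\coarse}$ over the union of the relevant element patches; finite overlap of patches yields $\Cstab$. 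For \textbf{(A2)}, every $T'\in\TT_\fine\setminus\TT_\coarse$ lies in a bisected parent $T\in\TT_\coarse$ with $|T'|\le|T|/2$; the faces of $T'$ interior to $T$ contribute no jump of $\bfA(\nabla v_\coarse)$ (constant on $T$), while the remaining faces of $T'$ are subsets of faces of $T$. Hence $|T'|^{2/d}\le 2^{-2/d}|T|^{2/d}$ and $|T'|^{1/d}\le 2^{-1/d}|T|^{1/d}$, and summing the local contributions gives the reduction with $\qred=2^{-1/(2d)}$, exactly as in~\cite[Section~10.1]{axioms}.

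For \textbf{(A3)}, strong monotonicity gives $\alpha\,\enorm{u^\star-u_\coarse^\star}^2\le\dual{\AA u^\star-\AA u_\coarse^\star}{u^\star-u_\coarse^\star}_{\XX'\times\XX}$; writing the right-hand side as $F(e)-\dual{\AA u_\coarse^\star}{e}_{\XX'\times\XX}$ with $e:=u^\star-u_\coarse^\star$, subtracting the Galerkin relation~\eqref{eq:exact_solution_disc} tested with a Scott--Zhang quasi-interpolant $I_\coarse e\in\XX_\coarse$, and integrating by parts elementwise produces the volume and jump residuals of $u_\coarse^\star$ tested against $e-I_\coarse e$; the interpolation estimates $\|e-I_\coarse e\|_{L^2(T)}\lesssim|T|^{1/d}\enorm{e}_{\omega_T}$ and $\|e-I_\coarse e\|_{L^2(\partial T)}\lesssim|T|^{1/(2d)}\enorm{e}_{\omega_T}$ then yield $\enorm{u^\star-u_\coarse^\star}\le\Crel\,\eta_\coarse(u_\coarse^\star)$ with $\Crel$ depending only on $\alpha$, $d$, and shape regularity. \textbf{(A3$^{+}$)} is the same argument with $u^\star$ replaced by $u_\fine^\star\in\XX_\fine$: choosing a localized quasi-interpolation $I_\coarse\colon\XX_\fine\to\XX_\coarse$ with $I_\coarse v_\fine=v_\fine$ on those elements of $\TT_\coarse\cap\TT_\fine$ that do not touch $\TT_\coarse\setminus\TT_\fine$, the function $(u_\fine^\star-u_\coarse^\star)-I_\coarse(u_\fine^\star-u_\coarse^\star)$ is supported in a neighbourhood of $\TT_\coarse\setminus\TT_\fine$, so only $\eta_\coarse(\TT_\coarse\setminus\TT_\fine,u_\coarse^\star)$ survives on the right-hand side. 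Finally, \textbf{(QM)} follows by splitting $\eta_\fine(u_\fine^\star)\le\eta_\fine(\TT_\fine\cap\TT_\coarse,u_\fine^\star)+\eta_\fine(\TT_\fine\setminus\TT_\coarse,u_\fine^\star)$, bounding the first term via (A1) by $\eta_\coarse(\TT_\fine\cap\TT_\coarse,u_\coarse^\star)+\Cstab\,\enorm{u_\fine^\star-u_\coarse^\star}$ and the second via (A2), and then invoking discrete reliability (A3$^{+}$); collecting constants gives $\Cmon\le 1+\Cstab\,\Cdrel$.

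I expect the two slightly technical points hidden in this outline to be the only real work: deriving the pointwise Lipschitz and strong-monotonicity bounds for the vector flux $\bfA$ from the scalar condition~\eqref{eq:assumption-mu}, and the bookkeeping in (A3$^{+}$) that confines the estimator to the refined region $\TT_\coarse\setminus\TT_\fine$ through a locally mesh-preserving quasi-interpolation. Everything else is a direct transcription of the residual-estimator analysis in~\cite[Section~10.1]{axioms}, with the strong-monotonicity constant $\alpha$ and the Lipschitz constant $L$ taking over the roles of the usual ellipticity and continuity constants.
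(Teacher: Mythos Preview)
The paper does not actually prove this proposition; it is stated with a terminal \qed and the preceding sentence refers the reader to~\cite[Section~10.1]{axioms}. Your sketch is a correct and faithful expansion of precisely that argument, with one minor caveat in the (QM) step: as stated, (A2) applies only to coarse functions $v_\coarse\in\XX_\coarse$, so you cannot directly bound $\eta_\fine(\TT_\fine\setminus\TT_\coarse,u_\fine^\star)$ by it; the clean route is to first apply stability on all of $\TT_\fine$ to replace $u_\fine^\star$ by $u_\coarse^\star$, then observe $\eta_\fine(u_\coarse^\star)\le\eta_\coarse(u_\coarse^\star)$ from (A2) together with the fact that the local indicators coincide on $\TT_\fine\cap\TT_\coarse$, and finally invoke (A3$^{+}$) to obtain $\Cmon\le 1+\Cstab\,\Cdrel$ as claimed.
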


\subsection{Comments} \label{sec:comments}

The analysis in the following sections applies to lowest-order FEM for the
model problem~\eqref{eq:nonlinear:strongform}--\eqref{eq:assumption-mu} from
Section~\ref{section:introAILFEM}. However, it extends beyond the restrictions
with the following understanding: In the spirit of~\cite{axioms}, our analysis
in Section~\ref{section:algorithm}--\ref{sec:optimality} below relies only on
certain assumption on the PDE and the error estimator:

\noindent
(a) The considered PDE
\begin{equation*}
	-\div \big(\boldsymbol{A} (\nabla u) \big)
	=
	F
	\text{ in } \Omega,
\end{equation*}
is complemented by appropriate boundary conditions and the nonlinearity
$\boldsymbol{A} \colon {\mathbb{R}}^d \to {\mathbb{R}}^d$ yields a strongly
monotone and Lipschitz continuous operator $ \AA \colon	\XX \to \XX'$ on
\(\XX \subseteq H^1(\Omega)\).
\noindent
(b) The operator $\AA$ has a Gâteaux-differentiable potential
$\PP \colon \XX \to \R$, i.e., it holds that $\AA = {\rm d} \PP$. In this case,
direct computation proves~\eqref{eq:energy}--\eqref{eq:distance} for the energy
\begin{equation}\label{eq:energy_pot_def}
	\EE \coloneqq \PP - F.
\end{equation}
We note that the formula~\eqref{eq:energy_pot_def} for the energy coincides
with~\eqref{eq:energy_definition} for scalar nonlinearities
$\boldsymbol{A} (\nabla u) = \mu(|\nabla u|^2)\nabla u$, while general
nonlinearities $\boldsymbol{A} \colon {\mathbb{R}}^d \to {\mathbb{R}}^d$
require additional assumptions to ensure the existence of $\PP$.

\noindent
(c) The linearization
satisfies~\eqref{eq:linearization:bounds}--\eqref{eq:coercive} in
Section~\ref{sec:linearization} (with examples given above).

\noindent
(d) The algebraic solver is contractive in the sense
of~\eqref{eq:alg:sol:contraction} in Section~\ref{sec:algebraic_solver}.

\noindent
(e) The error estimator
satisfies~\eqref{axiom:stability}--\eqref{axiom:reliability}, 
\eqref{axiom:discrete_reliability}. Currently, the proof of
stability~\eqref{axiom:stability} is only known, see \cite{gmz2012}, for scalar
nonlinearities $\boldsymbol{A}(\nabla u) = \mu(|\nabla u|^2)\nabla u$ with
growth condition~\eqref{eq:assumption-mu} and lowest-order case
$p=1$. Reduction~\eqref{axiom:reduction} holds for any continuous nonlinearity
$\boldsymbol{A} \colon {\mathbb{R}}^d \to {\mathbb{R}}^d$, and
reliability~\eqref{axiom:reliability} and discrete
reliability~\eqref{axiom:discrete_reliability} hold for any continuous
nonlinearity $\boldsymbol{A} \colon {\mathbb{R}}^d  \to {\mathbb{R}}^d$ inducing
a strongly monotone operator $ \AA \colon \XX \to \XX'$.

\section{Adaptive algorithm}
\label{section:algorithm}
For the numerical approximation of problem~\eqref{eq:exact_solution_disc}, the
Algorithm~\ref{algorithm} steers the adaptive mesh-refinement with index
$\ell$, the inexact iterative linearization with index $k$, and the contractive
algebraic solver with index $j$. In each step $(\ell,k,j)$, it yields an
approximation $u_\ell^{k,j}\in\XX_\ell$ to the unique (unavailable) exact
discrete solution $u_\ell^\star \in \XX_\ell$ to~\eqref{eq:exact_solution_disc}.
The summary of notation is presented in Table~\ref{tab_not}.
\begin{table}[htbp!]
	\centering
	\begin{tabular}{@{}cccccc@{}}
		\toprule
		& \multicolumn{2}{c}{Counter} & \multicolumn{3}{c}{Discrete Solution} \\
		\cmidrule(r){2-3} \cmidrule(l){4-6}
		& Running & Stopping & Running & Stopping & Exact \\
		\midrule
		Mesh & $\ell$ & $\underline\ell$ & $u_\ell^{\kk,\jj}$ 
		& $u_{\underline\ell}^{\kk,\jj}$ & $u_\ell^\star$ 
		from~\eqref{eq:exact_solution_disc} \\
		Linearization & $k$ & $\kk$ & $u_\ell^{k,\jj}$ & $u_{\ell}^{\kk,\jj}$ 
		& $u_\ell^{k,\star}$ from~\eqref{eq:linearized_pb} \\
		Algebraic Solver & $j$ & $\jj$ & $u_\ell^{k,j}$ & $u_\ell^{k,\jj}$ & \\
		\bottomrule
	\end{tabular}
	\vspace{0.1cm}
	\caption{\label{tab_not}Counters and discrete solutions in 
	Algorithm~\ref{algorithm}.}
\end{table}

\begin{algorithm}\label{algorithm}
\renewcommand{\theequation}{Alg\arabic{equation}}
\newcounter{tmp}
	\setcounter{tmp}{\arabic{equation}}
	\setcounter{equation}{0}
			{\bfseries Input:} Initial mesh $\TT_0$ and initial guess
			$u_0^{0,0} \coloneqq u_0^{0,\jj} = u_0^{0,\star} \in \XX_0$,
			adaptivity parameters $0 < \theta \le 1$, $1 \le \Cmark$,
			$0 < \llin$, algebraic solver parameter $0 < \rho < 1$, and
			tolerance $\tau \ge 0$.
			\\[2mm]
			Initialize $\alpha_{\rm min}> 0$, $J_{\rm max} \in \N$ with
			arbitrary values.
			\\[2mm]
			\textbf{Repeat}, for all $\ell = 0,1, \dots$, the following
			steps~\ref{alg:i}--\ref{alg:v} (adaptive loop):
			\begin{enumerate}[label = {\rm (\roman*)},leftmargin=0.8cm]
			\item\label{alg:i} \textbf{Repeat}, for all $k = 1,2, \dots$, the
			following steps~\ref{alg:a}--\ref{alg:d} (linearization loop):
			\begin{enumerate}[label = {\rm (\alph*)},leftmargin=0.7cm]
			\item\label{alg:a} Define $u_\ell^{k,0} \coloneqq u_\ell^{k-1,\jj}$.
			\item \textbf{Repeat}, for all $j= 1,2,3, \dots$, 
			the following steps~\ref{alg:I}--\ref{alg:IV} 
			(algebraic solver loop):

			\begin{enumerate}[label = {\rm (\Roman*)},leftmargin=0.5cm]
			\item \label{alg:I}  Consider the problem of finding
			$u_\ell^{k,\star} \in \XX_\ell$ such that, for all $v_\ell \in
			\XX_\ell$
			\begin{equation}
				\mkern70mu
				a(u_\ell^{k-1,\jj}; u_\ell^{k,\star},v_\ell)
				=
				a(u_\ell^{k-1,\jj}; u_\ell^{k-1,\jj}, v_\ell) + F(v_\ell) -
				\dual{\AA u_\ell^{k-1,\jj}}{v_\ell}_{\XX' \times \XX}
				\tag{Alg1} \label{eq:linearized_pb}
			\end{equation}
			and compute $u_\ell^{k,j} \approx u_\ell^{k,\star}$
			from $u_\ell^{k,j-1}$ by one step of the algebraic solver.

			\item Compute the local refinement indicators
			$\eta_\ell(T,u_\ell^{k,j})$ for all $T \in \TT_\ell$.

			\item	\textbf{If}
			$\eta_\ell(u_\ell^{k,j})
				+ \enorm{u_\ell^{k,j} - u_\ell^{k-1,\jj}}
				+ \enorm{u_\ell^{k,j} - u_\ell^{k,j-1}}
				\le \tau,$
			\hspace*{\fill}{\rm(Alg2)}\\
			\label{eq:st_crit_algorithm}
			\!\!\!\!
			\textbf{then} set $\underline \ell \coloneqq \ell$,
			$\kk \coloneqq \kk[\underline \ell] \coloneqq k$,
			$\jj \coloneqq \jj[\underline \ell, \kk] \coloneqq j$
			and terminate Algorithm~\ref{algorithm}.
			\item \label{alg:IV} Compute
			$
				\alpha_\ell^{k,j}
				\coloneqq
				\dist(u_\ell^{k,j}, u_\ell^{k-1,\jj})
				/
				\enorm{u_\ell^{k,j} - u_\ell^{k-1,\jj}}^2
			$.

			\textbf{Until} either
				$\alpha_\ell^{k,j} \ge \alpha_{\rm min} \text{ or }
				u_\ell^{k,j} = u_\ell^{k-1,\jj} \text{ or }
				\big[ \alpha_\ell^{k,j} > 0 \text{ and } j > J_{\rm max} \big]. 
				$
			\hspace*{\fill}{\rm(Alg3)}\\[-2mm]
			\label{eq:st_alg}
			\end{enumerate}
			
			\item Define $\jj \coloneqq \jj[\ell,k]\coloneqq j$.
			\item \label{alg:d} \textbf{If} $\jj[\ell,k] > J_{\rm max}$,
			\textbf{then} update $J_{\rm max} \mapsfrom \jj[\ell,k]$ and
			$\alpha_{\rm min} \mapsfrom \rho \, \alpha_{\rm min}$.
			\\[1mm]
			\textbf{Until} 
			$
				\dist(u_\ell^{k,\jj},u_\ell^{k-1,\jj})
				\le
				\llin\eta_\ell(u_\ell^{k,\jj})^2.
			$
			\hspace*{\fill}{\rm(Alg4)}\\[-2mm]
			\label{eq:st_lin}
			\end{enumerate}
			\item Define $\kk \coloneqq \kk[\ell]\coloneqq k$.
			\item Determine a set
			\begin{equation}
				\begin{aligned}
				\MM_\ell \in \M_\ell[\theta,u_\ell^{\kk,\jj}]
				&\coloneqq
				\set{
					\UU_\ell \subseteq \TT_\ell
				\colon
					\theta\, \eta_{\ell}(u_{\ell}^{\kk,\jj})^{2}
					\le  \eta_{\ell}(\UU_\ell, u_{\ell}^{\kk,\jj})^{2}
				}
				\text{ satisfying }
				\\
				\#\MM_\ell
				&\leq
				\Cmark \, \min_{\UU_\ell \in \M_\ell[\theta,u_\ell^{\kk,\jj}]} 
				\#\UU_\ell.
				\end{aligned}
				\tag{Alg5} \label{eq:doerfler}
			\end{equation}
			\item Generate
			$\TT_{\ell+1} \coloneqq \mathtt{refine}(\TT_\ell,\MM_\ell)$ and
			define
			$
				u_{\ell+1}^{0,0}
				\coloneqq
				u_{\ell+1}^{0,\jj}
				\coloneqq
				u_{\ell+1}^{0, \star}
				\coloneqq
				u_\ell^{\kk,\jj}
			$.
			\item \label{alg:v} Update counters $\ell \coloneqq \ell + 1$,
			$k \coloneqq 0$, and $j \coloneqq 0$ and continue with {\rm(i)}.
		\end{enumerate}
		\setcounter{equation}{\arabic{tmp}}
\end{algorithm}
Some remarks are in order to explain the nature of Algorithm~\ref{algorithm}.
The innermost loop (Algorithm~\ref{algorithm}(i.b) with index \(j\)) steers the
algebraic solver. Note that the exact solution $u_\ell^{k,\star}$
of~\eqref{eq:linearized_pb} is not computed but only approximated by the
iterates $u_\ell^{k,j}$. The middle loop (Algorithm~\ref{algorithm}(i) with
index \(k\)) steers the iterative linearization. Moreover, the iterative
linearization~\eqref{eq:linearized_pb} is perturbed, since instead of the
unavailable $u_\ell^{k,\star}$, we use the computed final iterate
$u_\ell^{k,\jj}$; cf.~\eqref{eq:linearization:exact}--\eqref{eq:coercive}.

We stress the new stopping
criterion~(\hyperref[eq:st_alg]{Alg3}) for the algebraic solver 
(with index \(j\)). The main idea behind this criterion 
stems from the central link of norm and energy encoded in \eqref{eq:coercive}. 
Indeed, this leads to the energy contraction \eqref{eq:energy-contraction} 
(if the exact algebraic 
solution is computed) and it is an essential ingredient for 
\textit{energy-steered} adaptive algorithms as in, e.g.,
\cite{hpw2021}. Since we now employ an 
\textit{iterative} algebraic solver, we need to \textit{algorithmically} 
enforce a similar link on the level of the inexact solver to ensure 
a similar energy contraction for the 
obtained iterates. Thus, heuristically, we iterate the 
algebraic solver until \eqref{eq:coercive} is satisfied for the 
\textit{inexact solution}. We prove in Proposition~\ref{prop:unif-steps} 
that this criterion will indeed be satisfied for a 
\emph{fixed} number of algebraic solver steps. Moreover, 
we devise the stopping criterion to be \textit{parameter-free}: 
$\alpha_\ell^{k,j}$ plays the role of the constant in the norm-energy link, 
which possibly needs to be made smaller (via multiplication by $\rho<1$) whenever 
a pre-fixed maximum algebraic solver steps $J_{\rm max}$ is reached. The latter 
is also increased when the norm-energy link is not satisfied 
(Proposition~\ref{prop:unif-steps} ensures that there is a uniform upper bound 
for $J_{\rm max}$).
Structurally, 
the stopping
criterion~(\hyperref[eq:st_lin]{Alg4}) for the iterative linearization 
(with index \(k\)) is the same as 
in~\cite{ghps2021,hpsv2021,hpw2021, bfmps2025} 
and aims to balance, via the associated \textsl{a~posteriori} estimators, 
the linearization error relative to the discretization error.
Finally, the outermost loop (with index \(\ell\)) steers the local adaptive
mesh-refinement by employing the D\"orfler criterion~\eqref{eq:doerfler} 
from~\cite{doerfler1996} used to mark
elements $T \in \MM_\ell$ for refinement. 
Moreover, note that Algorithm~\ref{algorithm}
uses \emph{nested iteration}: each loop is initialized with the
previously available approximation. This is crucial from both a 
practical and analytical standpoint as it is needed for the linear complexity 
and convergence analysis of the adaptive algorithm.

\subsection{Index set \(\QQ\) for the triple loop}

To analyze the asymptotic convergence behavior of Algorithm~\ref{algorithm} for
tolerance $\tau = 0$, we define the index set
\begin{equation} \label{eq_QQ}
	\QQ \coloneqq \set{(\ell,k,j) \in \mathbb{N}_0^3 \colon \text{index triple
	$(\ell,k,j)$ is used in Algorithm~\ref{algorithm}}}.
\end{equation}
Since Algorithm~\ref{algorithm} is sequential, the index set $\QQ$ is naturally
ordered by the lexicographic ordering. For indices
$(\ell,k,j), (\ell',k',j') \in \QQ$, we write
\begin{equation}
	(\ell,k,j)
	<
	(\ell',k',j')
	\,\,\, \stackrel{\text{def}}{\Longleftrightarrow} \,\,\,
	(\ell,k,j)
	\text{ appears earlier in Algorithm~\ref{algorithm} than }
	(\ell',k',j').
\end{equation}
With this ordering, we can define
\begin{equation*}
	|\ell,k,j|\coloneqq\#\set{(\ell',k',j')\in\QQ \colon
	(\ell',k',j')<(\ell,k,j)},
\end{equation*}
which is the {\em total step number} of Algorithm~\ref{algorithm}.
We make the following definitions, which are consistent with that of
Algorithm~\ref{algorithm}:
\begin{align*}
	\underline\ell
	&\coloneqq
	\sup\set{\ell \in \mathbb{N}_0 \colon (\ell,0,0) \ \in \QQ}
	\in \mathbb{N}_0 \cup\{\infty\},
	\\
	\kk[\ell]
	&\coloneqq
	\sup\set{k \in \mathbb{N} \colon (\ell,k,0) \, \in \QQ}
	\in \mathbb{N} \cup\{\infty\}
	\quad \text{if } (\ell,0,0) \in \QQ,
	\\
	\jj[\ell,k]
	&\coloneqq
	\sup\set{j \in \mathbb{N} \colon (\ell,k,j) \ \in \QQ}
	\in \mathbb{N} \cup \{ \infty \}
	\quad \text{if } (\ell,k,0) \in \QQ.
\end{align*}
To abbreviate notation, we use the following convention: If the mesh index
$\ell \in \mathbb{N}_0$ is clear from the context, we simply write
$\kk = \kk[\ell]$ for, e.g., $u_\ell^{\kk,j} = u_\ell^{\kk[\ell],j}$ or
$(\ell,\kk[\ell],j) = (\ell,\kk,j)$.  Similarly, we simply write
$\jj = \jj[\ell,k]$ for, e.g., $u_\ell^{k,\jj} = u_\ell^{k,\jj[\ell,k]}$ or
$(\ell,k,\jj) = (\ell,k,\jj[\ell,k])$. We observe that, for all
$(\ell,k,0) \in \QQ$, it holds $\kk[\ell] \ge 1$ and
\begin{align*}
	&\jj [\ell, k]
	=
	0
	\quad \text{for} \ k=0 \
	\text{and}
	\\
	&\jj [\ell, k]
	\ge
	1
	\quad \text{for} \
	k\ge 1.
\end{align*}
Moreover, Proposition~\ref{prop:unif-steps} below proves that
$\jj[\ell, k]  \le j_0 < \infty $ with a uniform bound $j_0 \in \mathbb{N}$. For
infinitesimal tolerance $\tau = 0$, it generically holds that $\# \QQ = \infty$.
Our analysis below covers indeed the three exclusive cases in
Algorithm~\ref{algorithm} that can occur:
\begin{enumerate}[label=(\roman*)]
	\item There holds $\elll = \infty$ and hence $\kk [\ell] < \infty$ for all
	$\ell \in \mathbb{N}_0$, i.e., infinitely many steps of mesh-refinement
	take place.
	\item There holds $\elll < \infty$ with $\kk [\elll] = \infty$, in which
	case Corollary~\ref{corollary_0tol} below yields that
	$\eta_{\elll} (u_{\elll}^\star) = 0$ and hence
	$u^\star = u_{\elll}^\star$, while $u_\elll^{k,\jj} \neq u_\elll^{\star}$
	for all $k \in \mathbb{N}_0$.
	\item The exact solution $u^\star = u_{\elll}^{\kk, \jj}$ is hit at
	Algorithm~\ref{algorithm}(i.b.III) with
	$\eta_{\underline\ell}(u_{\underline\ell}^{\kk,\jj}) = 0$.
\end{enumerate}

\subsection{Termination of algebraic solver and a~posteriori control}

We present now some important properties of Algorithm~\ref{algorithm}, starting
with the following observation in the spirit of, e.g., 
\cite[Proof of Theorem 2.6]{hw2020}. Details are found in the 
Appendix~\ref{appendix}.
%
\begin{lemma}\label{lemma:energy-identity}
	For any $v_\ell, w_\ell \in \XX_\ell$, it holds that
	\begin{align}\label{eq2:lemma:energy-identity}
		\begin{split}
			\dist(v_\ell, w_\ell)
			&
			\le
			L \, \enorm{w_\ell - u_\ell^{k-1,\jj}} \, \enorm{v_\ell - w_\ell}
			+ \frac{L}{2} \, \enorm{v_\ell - w_\ell}^2
			\\&
			\qquad
				+ a(
					u_\ell^{k-1,\jj}; u_\ell^{k,\star} - u_\ell^{k-1,\jj},
					v_\ell-w_\ell
					).
		\end{split}
	\end{align}
	Moreover, for $w_\ell = u_\ell^{k-1,\jj}$, it follows that
	\begin{align}\label{eq2:lemma:energy-identity}
		\dist(v_\ell, u_\ell^{k-1,\jj})
		&\ge
		- \frac{L}{2} \, \enorm{v_\ell-u_\ell^{k-1,\jj}}^2
		+ a(
			u_\ell^{k-1,\jj}; u_\ell^{k,\star} - u_\ell^{k-1,\jj},
			v_\ell-u_\ell^{k-1,\jj}
			),
		\\ \label{eq3:lemma:energy-identity}
		\dist(v_\ell, u_\ell^{k-1,\jj})
		&\le
		- \frac{\alpha}{2} \, \enorm{v_\ell-u_\ell^{k-1,\jj}}^2
		+ a(
			u_\ell^{k-1,\jj}; u_\ell^{k,\star} - u_\ell^{k-1,\jj},
			v_\ell-u_\ell^{k-1,\jj}
			).
	\end{align}
\end{lemma}
Since the algebraic solver contracts in
norm~\eqref{eq:alg:sol:contraction} and the linearization contracts in
energy~\eqref{eq:energy-contraction}, we derive a crucial
estimate linking norm and energy of the iterates in the following 
proposition. Moreover, we show that the innermost loop of
Algorithm~\ref{algorithm}\textrm{(i.b)} terminates after a \textit{uniformly}
bounded
number of steps.
\begin{proposition}[uniform bound on number of algebraic solvers
steps]\label{prop:unif-steps}
	Consider arbitrary parameters $0 < \theta \le 1$, $1 \le \Cmark$,
	$0 < \llin$, $0 < \rho < 1$, \(\alpha_{\min} > 0\), \(J_{\max} \in \N\),
	$\tau \ge 0$, and arbitrary \(u_0^{0,0} \in \XX_0\) in
	Algorithm~\ref{algorithm}. Then, there holds the following:
	\begin{enumerate}[label=(\roman*), font = \upshape]
		\item For any $0 < \CCeng' < \Ceng$, there
	exists $j_0' \in \N$ such that
	\begin{equation}\label{eq:norm_energy_estimate}
		\CCeng' \, \enorm{u_\ell^{k,j} - u_\ell^{k-1,\jj}}^2
		\le
		\dist(u_\ell^{k,j}, u_\ell^{k-1,\jj})
		\quad  \text{for all }
		(\ell,k,j) \in \QQ
		\text{ with }
		j \ge j_0'.
	\end{equation}
		\item There exists an index $j_0 \in \N$ such
		that $\jj[\ell,k] \le j_0$ for all $(\ell,k,0) \in \QQ$, i.e., 
		the number
		of algebraic solver steps is finite and even uniformly bounded.
		Moreover, there exists $0< \CCeng < \Ceng$ such that
	\begin{equation}\label{eq:norm_energy_estimate_finalj}
		\CCeng \, \enorm{u_\ell^{k,\jj} - u_\ell^{k-1,\jj}}^2
		\le
		\dist(u_\ell^{k,\jj}, u_\ell^{k-1,\jj})
		\quad \text{for all }
		(\ell,k,0) \in \QQ
		\text{ with }
		k \ge 1.
	\end{equation}
	\end{enumerate}
\end{proposition}
\begin{proof}
	The proof consists of two steps.

	\textbf{Step 1: Proof of norm-energy 
	estimate~\eqref{eq:norm_energy_estimate}.}
	Let $0 < \CCeng' < \Ceng$. For any $(\ell, k, \jj) \in \QQ$ and for the
	theory only, we disregard the termination index $\jj [\ell, k]$ and
	formally continue the algebraic solver for all $j > \jj[\ell,k]$ by
	defining, but not computing in practice,
	$u_\ell^{k,j} = \Psi_\ell (u_\ell^{k,j-1}) \in \XX_\ell$.
	By assumption~\eqref{eq:coercive} and
	$u_\ell^{k,\star} = \Phi_\ell(u_\ell^{k-1,\jj})$, it holds that
	\begin{equation}\label{eq:prep_est_norm}
		\Ceng \, \enorm{u_\ell^{k,\star} - u_\ell^{k-1,\jj}}^2
		\eqreff{eq:coercive}
		\le
		\dist(u_\ell^{k,\star}, u_\ell^{k-1,\jj})
		\eqreff{eq:pythagoras_discrete}
		=
		\dist(u_\ell^{k,\star}, u_\ell^{k,j})
		+ \dist(u_\ell^{k,j}, u_\ell^{k-1,\jj}).
	\end{equation}
	We apply~\eqref{eq2:lemma:energy-identity} from
	Lemma~\ref{lemma:energy-identity} for $ v_\ell = u_\ell^{k,\star}$ and $
	w_\ell = u_\ell^{k,j}$, 
	combined with \eqref{eq:linearization:bounds} to obtain
	\begin{align*}
		\begin{split}
			&\dist(u_\ell^{k,\star}, u_\ell^{k,j})
			\le
			\frac{3L}{2} \, \enorm{u_\ell^{k,\star} - u_\ell^{k,j}}^2
			+ (L + \Ccnt) \, \enorm{u_\ell^{k,\star} - u_\ell^{k-1,\jj}} \,
				\enorm{u_\ell^{k,\star} - u_\ell^{k,j}}.
		\end{split}
	\end{align*}
	Contraction of the algebraic
	solver~\eqref{eq:alg:sol:contraction} and nested iteration
	$u_\ell^{k,0} = u_\ell^{k-1,\jj}$ leads to
	\begin{align}
		\begin{split}
			\label{eq:prep_est_energy}
			\dist(u_\ell^{k,\star}, u_\ell^{k,j})
			&\eqreff*{eq:alg:sol:contraction}
			\le \,
			\Big[ 
				\frac{5L}{2} + \Ccnt
			\Big] \,
			\qalg^{j}
			\, \enorm{u_\ell^{k,\star} - u_\ell^{k-1,\jj}}^2
			\eqqcolon
			C \, \qalg^{j} \, \enorm{u_\ell^{k,\star} - u_\ell^{k-1,\jj}}^2.
		\end{split}
	\end{align}
	Since $ 0< \qalg < 1$ and $0<\CCeng'<\Ceng$, there exists a minimal
	$j_0 \in \N$ such that, for all $j\ge j_0$, holds
	$\CCeng' < \Ceng - C \, \qalg^{j} < \Ceng$. 
	By combining~\eqref{eq:prep_est_norm}--\eqref{eq:prep_est_energy}, we obtain
	\begin{equation}\label{eq:est1norm-nrg}
		0
		<
		(\Ceng - C \, \qalg^{j}) \enorm{u_\ell^{k,\star} - u_\ell^{k-1,\jj}}^2
		\le
		\dist(u_\ell^{k,j}, u_\ell^{k-1,\jj})
		\quad \text{for all} \
		j \ge j_0.
	\end{equation}
	Contraction of the algebraic solver~\eqref{eq:alg:sol:contraction} and
	nested iteration $u_\ell^{k,0} = u_\ell^{k-1,\jj}$ lead to
	\begin{equation}\label{eq:est2norm-nrg}
		\enorm{u_\ell^{k,j} - u_\ell^{k-1,\jj}}
		\le
		\enorm{u_\ell^{k,\star} - u_\ell^{k-1,\jj}}
		+ \enorm{u_\ell^{k,\star} - u_\ell^{k,j}}
		\le
		(1+ \qalg^{j}) \enorm{u_\ell^{k,\star} - u_\ell^{k-1,\jj}}.
	\end{equation}
	Choose $j'_0 \in \mathbb{N}_0$ minimal such that
	$
		0
		<
		\CCeng'
		\le
		\frac{\Ceng - C \, \qalg^{j} }{(1+ \qalg^{j})^2}
		<
		\Ceng - C \, \qalg^{j}
		<
		\Ceng
	$
	for all $j \ge j'_0$, where $j'_0 \in \mathbb{N}$ depends only on
	$L, \, \Ccnt, \, \qalg$, and $ \CCeng' < \Ceng$.
	Combining~\eqref{eq:est1norm-nrg}--\eqref{eq:est2norm-nrg} with the choice
	of $j'_0$, we are led to
	\begin{equation*}
		\CCeng' \, \enorm{u_\ell^{k,j} - u_\ell^{k-1,\jj}}^2
		\le
		\frac{\Ceng - C \, \qalg^{j} }{(1+ \qalg^{j})^2}
		\, \enorm{u_\ell^{k,j} - u_\ell^{k-1,\jj}}^2
		\le
		\dist(u_\ell^{k,j}, u_\ell^{k-1,\jj})
		\quad \text{for all} \
		j \ge j'_0.
	\end{equation*}

	\medskip
	\textbf{Step~2 (uniform bound on algebraic solver steps).}
	We argue by contradiction and assume that for any $j_0 \in \N$, there
	exists $(\ell,k,0) \in \QQ$ such that $\jj[\ell,k] > j_0$. On the one hand,
	Algorithm~\ref{algorithm}(i.d) then implies that 
	$\lim_{|\ell,k,\jj| \to \infty} J_{\rm max} [\ell, k] = \infty$ and
	$\lim_{|\ell,k,\jj| \to \infty}  \alpha_{\rm min}[\ell, k] =  0$, where we
	index $J_{\rm max} \in \N$ and $\alpha_{\rm min}>0$ by $[\ell, k] $, since
	they are potentially adapted in Algorithm~\ref{algorithm}(i.d) after the
	$j$-loop has been terminated. On the other hand, given $0<\CCeng' < \Ceng$,
	estimate~\eqref{eq:norm_energy_estimate} from (i)
	guarantees the existence of $j_0' \in \N_0$ such that
	$\CCeng' \le \alpha_\ell^{k,j}$ for all $(\ell,k,j) \in \QQ$ with
	$j \ge j_0'$. We fix $(\ell_0,k_0,\jj) \in \QQ$ such that
	$\alpha_{\rm min}[\ell, k] \le \CCeng'$ for all $(\ell,k,0) \in \QQ$ with
	$|\ell,k,0| > |\ell_0,k_0,\jj|$. However, for all $(\ell,k,0) \in \QQ$ with
	$|\ell,k,0| > |\ell_0,k_0,\jj|$, this implies that the termination of
	criterion~(\hyperref[eq:st_alg]{Alg3}) will be satisfied for some
	$j = \jj [\ell, k] \le j_0'$. In particular, this yields that
	$\jj [\ell, k] \le J_{\rm max} [\ell_0, k_0]$ and prevents that
	$J_{\rm max} [\ell, k]  \rightarrow \infty$ as
	$|\ell,k,j| \rightarrow \infty$. This contradiction concludes the proof.
\end{proof}
All iterates computed by Algorithm~\ref{algorithm} can be reliably controlled,
which motivates the choice of the stopping
criterion~(\hyperref[eq:st_crit_algorithm]{Alg2}). Indeed, we recall nested 
iteration \(u_\ell^{0, 0} = u_\ell^{0, \jj} = u_{\ell-1}^{\kk, \jj}\) for all
\(\ell \ge 1\) so that all \(u_\ell^{k, j}\) besides \(u_{\ell}^{0, 0}\) are
controlled by the subsequent \textsl{a~posteriori} 
estimate~\eqref{eq:aposteriori-jk}. 
The proof follows along the lines of~\cite[Proposition~2]{hpsv2021} 
with the modification that the linearization contracts in 
energy~\eqref{eq:energy-contraction} (instead of norm) together with the 
equivalence \eqref{eq:energy}. Details are found in the Appendix~\ref{appendix}.
\begin{proposition}[\textsl{a~posteriori} error control]
	\label{prop:aposteriori-jk}
	Consider arbitrary parameters $0 < \theta \le 1$, $1 \le \Cmark$,
	$0 < \llin$, $0 < \rho < 1$, \(\alpha_{\min} > 0\), \(J_{\max} \in \N\),
	$\tau \ge 0$, and arbitrary \(u_0^{0,0} \in \XX_0\). Suppose that the
	estimator satisfies stability~\eqref{axiom:stability} and
	reliability~\eqref{axiom:reliability}. Then, for any $(\ell,k,j) \in \QQ$
	with $k \ge 1$ and $j \ge 1$, it holds that
	\begin{equation}\label{eq:aposteriori-jk}
		\enorm{u^\exact - u_\ell^{k,j}}
		\le
		\Crel' \,
		\big[
			\eta_\ell(u_\ell^{k,j})
			+ \enorm{u_\ell^{k,j} - u_\ell^{k-1,\jj}}
			+ \enorm{u_\ell^{k,j} - u_\ell^{k,j-1}}
		\big].
	\end{equation}
	The constant $\Crel' > 0$ depends only on $\Crel, \, \Cstab, \, \qalg, \,
	\alpha, \, \Ccnt, \,  \qnrgs$. \qed
\end{proposition}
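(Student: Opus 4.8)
The plan is to bound the total error $\enorm{u^\star - u_\ell^{k,j}}$ by splitting it via the triangle inequality through the exact discrete solution $u_\ell^\star$ and the exact linearization iterate $u_\ell^{k,\star}$, and then to control each piece by a combination of the estimator $\eta_\ell(u_\ell^{k,j})$ and the two available increments $\enorm{u_\ell^{k,j} - u_\ell^{k-1,\jj}}$ and $\enorm{u_\ell^{k,j} - u_\ell^{k,j-1}}$. Concretely, I would write
\begin{equation*}
	\enorm{u^\star - u_\ell^{k,j}}
	\le
	\enorm{u^\star - u_\ell^\star}
	+ \enorm{u_\ell^\star - u_\ell^{k,\star}}
	+ \enorm{u_\ell^{k,\star} - u_\ell^{k,j}}.
\end{equation*}
The first term is handled by reliability~\eqref{axiom:reliability}: $\enorm{u^\star - u_\ell^\star} \le \Crel\,\eta_\ell(u_\ell^\star)$, and then $\eta_\ell(u_\ell^\star)$ is replaced by $\eta_\ell(u_\ell^{k,j})$ plus a perturbation using stability~\eqref{axiom:stability} (more precisely the triangle-inequality consequence $|\eta_\ell(u_\ell^\star) - \eta_\ell(u_\ell^{k,j})| \le \Cstab\,\enorm{u_\ell^\star - u_\ell^{k,j}}$), at the cost of reintroducing $\enorm{u_\ell^\star - u_\ell^{k,j}}$ on the right-hand side, which must later be absorbed.

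For the algebraic piece $\enorm{u_\ell^{k,\star} - u_\ell^{k,j}}$, I would use the contraction consequence~\eqref{eq2:alg:sol:contraction}, namely $\enorm{u_\ell^{k,\star} - u_\ell^{k,j}} \le \frac{\qalg}{1-\qalg}\,\enorm{u_\ell^{k,j} - u_\ell^{k,j-1}}$, which directly produces the third term on the right-hand side of~\eqref{eq:aposteriori-jk}. For the linearization piece $\enorm{u_\ell^\star - u_\ell^{k,\star}}$ I would pass through the energy: by the norm-energy equivalence~\eqref{eq:energy} applied in $\XX_\ell$, $\frac{\alpha}{2}\enorm{u_\ell^\star - u_\ell^{k,\star}}^2 \le \dist(u_\ell^\star, u_\ell^{k,\star})$, and then use the linearization-error estimate~\eqref{eqq:energy-contraction} (with the linearization point $u_\ell^{k-1,\jj}$ and $u_\ell^{k,\star} = \Phi_\ell(u_\ell^{k-1,\jj})$), which bounds $\dist(u_\ell^{k,\star}, u_\ell^{k-1,\jj})$ — hence $\dist(u_\ell^\star, u_\ell^{k,\star})$ via~\eqref{eq:pythagoras_discrete} and $\dist(u_\ell^\star,u_\ell^{k,\star})\le \dist(u_\ell^\star,u_\ell^{k-1,\jj}) = \dist(u_\ell^{k,\star},u_\ell^{k-1,\jj}) + \dist(u_\ell^{k-1,\jj},u_\ell^\star)$ — in terms of $\dist(\Phi_\ell(u_\ell^{k-1,\jj}), u_\ell^{k-1,\jj}) \lesssim \enorm{u_\ell^{k,\star} - u_\ell^{k-1,\jj}}^2$ by~\eqref{eq:linearization:bounds}/\eqref{def:assumptions_operator}. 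The quantity $\enorm{u_\ell^{k,\star} - u_\ell^{k-1,\jj}}$ is in turn controlled by $\enorm{u_\ell^{k,j} - u_\ell^{k-1,\jj}}$ plus $\enorm{u_\ell^{k,\star} - u_\ell^{k,j}}$ (already handled). Thus every contribution ends up as a constant times $\eta_\ell(u_\ell^{k,j})$, $\enorm{u_\ell^{k,j}-u_\ell^{k-1,\jj}}$, $\enorm{u_\ell^{k,j}-u_\ell^{k,j-1}}$, or the leftover $\enorm{u_\ell^\star - u_\ell^{k,j}}$.

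The main obstacle is the absorption step: $\enorm{u_\ell^\star - u_\ell^{k,j}}$ reappears on the right (through stability, and possibly through the linearization estimate), so I must bound it back in terms of the three target quantities before absorbing. For this I would again split $\enorm{u_\ell^\star - u_\ell^{k,j}} \le \enorm{u_\ell^\star - u_\ell^{k,\star}} + \enorm{u_\ell^{k,\star} - u_\ell^{k,j}}$ and reuse the bounds just described — giving $\enorm{u_\ell^\star - u_\ell^{k,j}} \lesssim \enorm{u_\ell^{k,j}-u_\ell^{k-1,\jj}} + \enorm{u_\ell^{k,j}-u_\ell^{k,j-1}}$ with a constant depending only on $\alpha, L, \Ccnt, \qnrgs, \qalg$ (note no estimator term is needed here, since the linearization error is controlled purely by increments). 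Substituting this into the stability perturbation and collecting terms yields~\eqref{eq:aposteriori-jk} with $\Crel'$ depending only on $\Crel, \Cstab, \qalg, \alpha, \Ccnt, \qnrgs$, as claimed. A minor technical point to keep straight is that all energy identities are stated on the fixed space $\XX_\ell$ and use the linearization point $u_\ell^{k-1,\jj}$ consistently; since $k\ge 1$ and $j\ge 1$ are assumed, all the iterates involved are well-defined and the nested-iteration identity $u_\ell^{k,0}=u_\ell^{k-1,\jj}$ is available whenever needed.
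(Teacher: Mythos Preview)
Your proposal is correct and follows essentially the same route as the paper: split through $u_\ell^\star$ and $u_\ell^{k,\star}$, use reliability~\eqref{axiom:reliability} plus stability~\eqref{axiom:stability} for the discretization part, the contraction consequence~\eqref{eq2:alg:sol:contraction} for the algebraic part, and the energy equivalence~\eqref{eq:energy} together with~\eqref{eqq:energy-contraction} for the linearization part, finally converting $\enorm{u_\ell^{k,\star}-u_\ell^{k-1,\jj}}$ into the computable increment. Two small remarks: (i) there is no genuine ``absorption'' obstacle, since the term reintroduced by stability is the \emph{discrete} error $\enorm{u_\ell^\star - u_\ell^{k,j}}$, not the total error on the left, and you (like the paper) bound it directly by the two increments; (ii) for the step $\dist(u_\ell^{k,\star},u_\ell^{k-1,\jj}) \lesssim \enorm{u_\ell^{k,\star}-u_\ell^{k-1,\jj}}^2$ the paper invokes the packaged identity~\eqref{eq3:lemma:energy-identity} combined with~\eqref{eq:linearization:bounds}, which is the clean way to justify what you cite loosely via~\eqref{eq:linearization:bounds}/\eqref{def:assumptions_operator}.
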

%

\section{Full linear convergence}\label{sec:full-linear convergence}

We present our first main result, namely that Algorithm~\ref{algorithm}
leads to full R-linear convergence of the quasi-error consisting of 
discretization error by means of the associated estimator 
\(\eta_\ell(u_\ell^{\star})\), linearization error 
\(\enorm{u_\ell^{\star} - u_\ell^{k,\star}}\), 
and algebraic solver error \(\enorm{u_\ell^{k,\star} - u_\ell^{k, j}}\).
While this choice for the quasi-error, in line with the stopping
criterion~(\hyperref[eq:st_crit_algorithm]{Alg2}), differs from earlier 
contributions, the proof steps remain similar. Importantly, using triangle 
inequalities and reliability~\eqref{axiom:reliability}, we can estimate the
total error $\enorm{u^\star - u_\ell^{k,j}}$ by the quasi-error.
\begin{theorem}
	[full R-linear convergence of Algorithm~\ref{algorithm}]
	\label{theorem:linearconv}
	Consider arbitrary parameters $0 < \theta \le 1$, $1 \le \Cmark$,
	$0 < \llin$, $0 < \rho < 1$, \(\alpha_{\min} > 0\), \(J_{\max} \in \N\),
	$\tau \ge 0$, and arbitrary \(u_0^{0,0} \in \XX_0\).
	Suppose that the estimator
	satisfies~\eqref{axiom:stability}--\eqref{axiom:reliability}.
	Then, Algorithm~\ref{algorithm} guarantees full R-linear convergence
	of the quasi-error
	\begin{equation}\label{eq:quasi-error}
	\Eta_\ell^{k,j}
		\coloneqq
		\eta_\ell(u_\ell^{\star})
		+ \enorm{u_\ell^\star - u_\ell^{k,\star}}
		+ \enorm{u_\ell^{k,\star} - u_\ell^{k,j}},
	\end{equation}
	i.e., there exist constants $0 <\qlin< 1$ and $\Clin > 0$ such that
	\begin{equation}\label{eq:Rlin:convergence}
		\Eta_\ell^{k,j}
		\le
		\Clin
		{\qlin}^{|\ell,k,j| - |\ell'\!,k'\!,j'|} \, \!
		\Eta_{\ell'}^{k',j'}
		\text{ for all } 
		(\ell'\!,k'\!,j'),(\ell,k,j) \in \QQ
		\text{ with }
		|\ell'\!,k'\!,j'| \! < |\ell,k,j|, \!
	\end{equation}
	where $\Clin$ and $\qlin$ depend only on $L, \, \alpha, \, \qred, \,
	\qnrg,$ $\Cstab,
	\, \Crel, \, \theta, \, \lambda_{\rm lin}$, \(j_0\), and
	\(\qnrgs\).
\end{theorem}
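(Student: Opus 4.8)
The plan is to follow the by-now-standard \emph{tail-summability} route to R-linear convergence, adapted to the triple index set $\QQ$: it suffices to establish that there is a constant $C_{\rm tail}$, depending only on the quantities listed in Theorem~\ref{theorem:linearconv}, such that
\begin{equation*}
	\sum_{\substack{(\ell',k',j')\in\QQ\\ (\ell',k',j')>(\ell,k,j)}}
	\Eta_{\ell'}^{k',j'}
	\ \le\
	C_{\rm tail}\,\Eta_\ell^{k,j}
	\qquad\text{for all }(\ell,k,j)\in\QQ .
\end{equation*}
Indeed, reindexing $\QQ$ by the total step counter $n=|\ell,k,j|$ and invoking the elementary equivalence between tail-summability and R-linear convergence of a non-negative sequence (see, e.g., \cite{axioms,afem+sum_preprint}) then turns this into the claimed estimate~\eqref{eq:Rlin:convergence}. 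To prove the displayed bound I would first collect the per-module contraction properties. \emph{Mesh step:} applied to the unchanged function $u_\ell^{\kk,\jj}=u_{\ell+1}^{0,0}$, stability~\eqref{axiom:stability} has a vanishing right-hand side, so together with reduction~\eqref{axiom:reduction} and the D\"orfler property~\eqref{eq:doerfler} one obtains the estimator reduction $\eta_{\ell+1}(u_{\ell+1}^{0,0})^2\le q_\theta^2\,\eta_\ell(u_\ell^{\kk,\jj})^2$ with $q_\theta:=\bigl(1-(1-\qred^2)\theta\bigr)^{1/2}<1$, and stability again propagates this to later iterates on level $\ell+1$ up to $\Cstab$ times a norm-increment. \emph{Algebraic step:} iterating~\eqref{eq:alg:sol:contraction} with the nested initialisation $u_\ell^{k,0}=u_\ell^{k-1,\jj}$ gives $\enorm{u_\ell^{k,\star}-u_\ell^{k,j}}\le\qalg^{\,j}\,\enorm{u_\ell^{k,\star}-u_\ell^{k-1,\jj}}$. \emph{Linearisation step:} the perturbed energy contraction~\eqref{eq:energy-contraction} with $\XX_H=\XX_\ell$ and $u_H=u_\ell^{k-1,\jj}$ reads $\EE(u_\ell^{k,\star})-\EE(u_\ell^\star)\le\qnrgs\bigl(\EE(u_\ell^{k-1,\jj})-\EE(u_\ell^\star)\bigr)$, which via the norm--energy equivalence~\eqref{eq:energy} and a triangle inequality with the algebraic error turns into a contraction of the linearisation error in the norm, up to geometrically small algebraic perturbations. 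Crucially, at linearisation termination~\eqref{eq:st_crit_pic} combined with the norm--energy estimate~\eqref{eq:norm_energy_estimate_finalj} yields $\enorm{u_\ell^{\kk,\jj}-u_\ell^{\kk-1,\jj}}\le(\llin/\CCeng)^{1/2}\,\eta_\ell(u_\ell^{\kk,\jj})$, so the terminal linearisation increment is controlled by the estimator for \emph{any} $\llin>0$.

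The decisive structural ingredient is the inherent energy structure. By Proposition~\ref{prop:unif-steps}, $\EE(u_\ell^{k-1,\jj})-\EE(u_\ell^{k,\jj})=\dist(u_\ell^{k,\jj},u_\ell^{k-1,\jj})\ge\CCeng\,\enorm{u_\ell^{k,\jj}-u_\ell^{k-1,\jj}}^2\ge0$, and nested iteration gives $\EE(u_{\ell+1}^{0,0})=\EE(u_\ell^{\kk,\jj})$; hence the energies of the \emph{spine} iterates $u_\ell^{k,\jj}$ form a non-increasing sequence bounded below by $\EE(u^\star)$. Consequently the energy gaps $\EE(u_\ell^{k,\jj})-\EE(u^\star)$ are non-increasing along the spine, and~\eqref{eq:energy} (with $v_H=u_\ell^{k,\jj}$, resp.\ $v=u_\ell^\star$ in $\XX_H=\XX$) together with reliability~\eqref{axiom:reliability} yields $\tfrac{\alpha}{2}\,\enorm{u_\ell^\star-u_\ell^{k,\jj}}^2\le\EE(u_\ell^{k,\jj})-\EE(u^\star)$ and $\EE(u_\ell^{\kk,\jj})-\EE(u^\star)\lesssim\eta_\ell(u_\ell^{\kk,\jj})^2+\enorm{u_\ell^\star-u_\ell^{\kk,\jj}}^2$; moreover, summing the norm--energy estimate over the linearisation loop on level $\ell+1$ bounds $\sum_k\enorm{u_{\ell+1}^{k,\jj}-u_{\ell+1}^{k-1,\jj}}^2$ by $\CCeng^{-1}\bigl(\EE(u_\ell^{\kk,\jj})-\EE(u^\star)\bigr)$. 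Together with the \textsl{a~posteriori} bound of Proposition~\ref{prop:aposteriori-jk}, every summand of the quasi-error $\Eta_\ell^{k,j}$ is thereby controlled by the estimator plus norm-increments, and the latter are geometric in $j$ and quasi-geometrically summable in $k$.

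With these facts, the tail-summability estimate is assembled by organising the triple sum hierarchically. On a fixed pair $(\ell',k')$ the algebraic contributions form a finite geometric sum (at most $j_0$ terms by Proposition~\ref{prop:unif-steps}, ratio $\qalg$), hence are bounded by a constant times the first term of the block together with the linearisation increment; summing these blocks over $k'$ on a fixed level uses the perturbed linearisation contraction to yield a geometric series dominated by the first linearisation step of that level plus a fixed multiple of $\eta_{\ell'}(u_{\ell'}^{\kk,\jj})$; and summing over the mesh index $\ell'$ uses, besides the estimator reduction factor $q_\theta$, the monotonicity of the spine energies (which dominates every discretisation-error term uniformly) to close a geometric recursion. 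The decisive point for parameter-robustness is that the only quantities that cross a loop boundary are the algebraic error (geometrically small, thanks to $\qalg<1$ and the finiteness of the block) and the terminal linearisation increment, and by~\eqref{eq:norm_energy_estimate_finalj}--\eqref{eq:st_crit_pic} the latter is controlled by the estimator --- and by the monotone energy gap --- through a \emph{fixed} constant, never through a factor that would have to be made small. Hence no smallness of $\llin$ (nor of $\theta$) enters the argument, in contrast to~\cite{hpsv2021,afem+sum_preprint}. An application of the abstract tail-summability lemma then gives~\eqref{eq:Rlin:convergence}, with $\qlin$ and $\Clin$ depending only on $\qalg$, $j_0$, $\qnrgs$, $q_\theta$, and $\Cstab,\Crel$ (hence on $L,\alpha,\qred,\qnrg,\theta,\llin$).

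The main obstacle is this middle layer of the summation: reconciling that the algebraic solver is only \emph{norm}-contractive while the linearisation is only \emph{energy}-contractive in the \emph{perturbed} sense, and doing so without ever absorbing a term by a small parameter, so that the final geometric recursion for estimator plus energy gap closes with a ratio $<1$ that is \emph{independent} of $\llin$ and $\theta$. The resolution rests on (i) the uniform bound $j_0$ from Proposition~\ref{prop:unif-steps}, which makes each algebraic block finite; (ii) the monotonicity of the spine energies, which makes the energy gaps a genuinely decreasing master quantity dominating all error components; and (iii) the norm--energy estimate~\eqref{eq:norm_energy_estimate_finalj}, which ties the terminal linearisation increment to the estimator. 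Everything else is careful but routine bookkeeping using the triangle and Young inequalities together with~\eqref{eq:energy}.
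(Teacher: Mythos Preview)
Your overall architecture---tail-summability of $\Eta_\ell^{k,j}$ over the ordered index set $\QQ$, then the abstract equivalence with R-linear convergence---matches the paper exactly. Two points of divergence deserve comment.

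\textbf{The $j$-layer.} You describe the sum over $j'$ on a fixed $(\ell',k')$ as a geometric series with ratio $\qalg$. This is not correct for the full quasi-error: only the component $\enorm{u_{\ell'}^{k',\star}-u_{\ell'}^{k',j'}}$ contracts geometrically, while $\eta_{\ell'}(u_{\ell'}^{k',j'})$ and $\enorm{u_{\ell'}^\star-u_{\ell'}^{k',j'}}$ need not. The paper instead proves \emph{quasi-monotonicity} of $\Eta_{\ell'}^{k',j'}$ in $j'$ (via \eqref{axiom:stability} and \eqref{eq:alg:sol:contraction}) and then simply uses the uniform bound $\jj[\ell',k']\le j_0$ from Proposition~\ref{prop:unif-steps} to bound the sum by $(j_0+1)(3+2\Cstab)\,\Eta_{\ell'}^{k',j}$. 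Your conclusion (bounded sum) survives, but the mechanism is ``finite block $+$ quasi-monotone'', not ``geometric''.

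\textbf{The $(\ell,k)$-layer.} Here lies the genuine gap. The paper does \emph{not} build the two-level recursion by hand. It introduces the energy-based quantity $\Eta_\ell^k:=\dist(u_\ell^\star,u_\ell^{k,\jj})^{1/2}+\eta_\ell(u_\ell^{k,\jj})$, shows $\Eta_\ell^k\simeq\Lambda_\ell^k:=[\dist(u^\star,u_\ell^{k,\jj})+\gamma\,\eta_\ell(u_\ell^{k,\jj})^2]^{1/2}$, and then \emph{cites} \cite[Theorem~4]{ghps2021} for R-linear convergence of $\Lambda_\ell^k$ along the reduced counter $|\ell,k|$. That cited result is precisely the construction of a contracting Lyapunov functional with a carefully tuned weight $\gamma>0$ balancing estimator reduction against energy contraction across mesh levels. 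Your sketch asserts that ``the final geometric recursion for estimator plus energy gap closes with a ratio $<1$'', invoking estimator reduction $q_\theta$ and monotonicity of the spine energies. But monotonicity of $\EE(u_\ell^{k,\jj})-\EE(u^\star)$ gives only non-increase, not geometric decay; and the linearization contraction on a single level can terminate after one step if \eqref{eq:st_crit_pic} fires immediately. Closing the recursion genuinely requires the weighted combination $\Lambda_\ell^k$ (or an equivalent construction), and your outline does not supply it. Either cite \cite[Theorem~4]{ghps2021} as the paper does, or reproduce its proof; the ingredients you list (Proposition~\ref{prop:contrlin}, estimator reduction, \eqref{eq:st_crit_pic}) are the right ones, but the balancing argument is the whole difficulty and cannot be relegated to ``routine bookkeeping''.
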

A core result needed in proving the above main result consists in establishing
that Algorithm~\ref{algorithm} provides energy contraction at \emph{each}
inexact linearization step, which is closely related
to~\cite[Theorem~2.1]{hpw2021}.
\begin{proposition}[energy-contraction of inexact linearization]
	\label{prop:contrlin}
	Consider arbitrary parameters $0 < \theta \le 1$, $1 \le \Cmark$,
	$0 < \llin$, $0 < \rho < 1$, \(\alpha_{\min} > 0\), \(J_{\max} \in \N\),
	$\tau \ge 0$, and arbitrary \(u_0^{0,0} \in \XX_0\).
	Then, it holds that
	\begin{equation}\label{prop:linearization:eq2}
		0
		\le
		\dist(u_\ell^\star, u_\ell^{k,\jj})
		\le
		\qnrg \, \dist(u_\ell^\star, u_\ell^{k-1,\jj})
		\quad \text{for all }
		(\ell,k,\jj) \in \QQ
	\end{equation}
	and consequently
	\begin{equation}
		\label{prop:linearization:eq2+}
		\frac{1-\qnrg}{\qnrg} \, \dist(u_\ell^\exact, u_\ell^{k,\jj})
		\le
		\dist(u_\ell^{k,\jj}, u_\ell^{k-1,\jj})
		\le
		\dist(u_\ell^\exact, u_\ell^{k-1,\jj})
		\quad \text{for all }
		(\ell,k,\jj) \in \QQ,
	\end{equation}
	where
	$
		\displaystyle 0
		\le
		\qnrg \le 1 - \frac{2 \CCeng}{L} \frac{\alpha^2}{\Ccont^2} \,
		(1-\qalg)^2
		< 1
	$
	with $0< \CCeng < \Ceng$ from Proposition~\ref{prop:unif-steps}.
\end{proposition}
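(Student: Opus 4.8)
The plan is to mimic the proof of energy contraction for the exact linearization, i.e.\ \cite[Theorem~2.1]{hpw2021}, but carefully tracking the perturbation caused by the fact that we stop the algebraic solver at the finite index $\jj$ rather than solving \eqref{eq:banach_non_pert} exactly. The key structural ingredient that makes this work \emph{for arbitrary} $\llin$ is the norm-energy estimate \eqref{eq:norm_energy_estimate_finalj} from Proposition~\ref{prop:unif-steps}: it guarantees $\CCeng \, \enorm{u_\ell^{k,\jj} - u_\ell^{k-1,\jj}}^2 \le \dist(u_\ell^{k,\jj}, u_\ell^{k-1,\jj})$ with a uniform $0 < \CCeng < \Ceng$, i.e.\ the \emph{computed} iterate $u_\ell^{k,\jj}$ inherits a coercivity-type bound analogous to \eqref{eq:coercive} for the exact iterate $u_\ell^{k,\star}$. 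This is precisely what is needed to replay the contraction argument with the inexact iterate in the role of $\Phi_\ell(u_\ell^{k-1,\jj})$.

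\textbf{First}, I would establish an upper bound for $\dist(u_\ell^\star, u_\ell^{k,\jj})$ in terms of $\dist(u_\ell^\star, u_\ell^{k-1,\jj})$ and $\dist(u_\ell^{k,\jj}, u_\ell^{k-1,\jj})$. Using the additivity \eqref{eq:pythagoras_discrete}, write
$
\dist(u_\ell^\star, u_\ell^{k,\jj}) = \dist(u_\ell^\star, u_\ell^{k-1,\jj}) - \dist(u_\ell^{k,\jj}, u_\ell^{k-1,\jj}).
$
Hence the claimed contraction \eqref{prop:linearization:eq2} is equivalent to the lower bound
$
\dist(u_\ell^{k,\jj}, u_\ell^{k-1,\jj}) \ge (1-\qnrg)\,\dist(u_\ell^\star, u_\ell^{k-1,\jj}),
$
so the whole task reduces to showing that the energy decrease over one inexact linearization step is a fixed fraction of the current distance to the exact discrete solution.

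\textbf{Second} — and this is the heart of the argument — I would bound $\dist(u_\ell^{k,\jj}, u_\ell^{k-1,\jj})$ from below. Apply \eqref{eq2:lemma:energy-identity} of Lemma~\ref{lemma:energy-identity} with $v_\ell = u_\ell^{k,\jj}$ to get
$
\dist(u_\ell^{k,\jj}, u_\ell^{k-1,\jj}) \ge -\tfrac{L}{2}\enorm{u_\ell^{k,\jj}-u_\ell^{k-1,\jj}}^2 + a(u_\ell^{k-1,\jj}; u_\ell^{k,\star}-u_\ell^{k-1,\jj}, u_\ell^{k,\jj}-u_\ell^{k-1,\jj}),
$
and analogously \eqref{eq3:lemma:energy-identity} with $v_\ell = u_\ell^\star$ to relate $\dist(u_\ell^\star, u_\ell^{k-1,\jj})$ to the same bilinear-form term evaluated at $u_\ell^\star - u_\ell^{k-1,\jj}$. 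One then estimates the bilinear-form term using ellipticity/continuity \eqref{eq:linearization:bounds}, strong monotonicity and Lipschitz continuity \eqref{def:assumptions_operator}, and crucially the norm-energy bound \eqref{eq:norm_energy_estimate_finalj} to convert $\enorm{u_\ell^{k,\jj}-u_\ell^{k-1,\jj}}^2$ back into $\dist(u_\ell^{k,\jj}, u_\ell^{k-1,\jj})$. Tracking constants, this should yield $\dist(u_\ell^{k,\jj}, u_\ell^{k-1,\jj}) \ge \tfrac{2}{L}\tfrac{\alpha^2}{\Ccont^2}(1-\qalg)^2\,\CCeng\,\dist(u_\ell^\star, u_\ell^{k-1,\jj})$, where the factor $(1-\qalg)^2$ enters because the computed $u_\ell^{k,\jj}$ differs from $u_\ell^{k,\star}$ (one uses \eqref{eq2:alg:sol:contraction} and nested iteration $u_\ell^{k,0}=u_\ell^{k-1,\jj}$ to control $\enorm{u_\ell^{k,\star}-u_\ell^{k,\jj}}$ by $\qalg\,\enorm{u_\ell^{k,\jj}-u_\ell^{k-1,\jj}}/(1-\qalg)$, hence $\enorm{u_\ell^{k,\star}-u_\ell^{k-1,\jj}} \le \enorm{u_\ell^{k,\jj}-u_\ell^{k-1,\jj}}/(1-\qalg)$ as well). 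This identifies $\qnrg$ with the stated upper bound and proves $\qnrg < 1$; nonnegativity $\dist(u_\ell^\star, u_\ell^{k,\jj}) \ge 0$ is immediate from \eqref{eq:energy}.

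\textbf{Third}, \eqref{prop:linearization:eq2+} follows mechanically from \eqref{prop:linearization:eq2}: the right inequality is the additivity identity plus $\dist(u_\ell^\star, u_\ell^{k,\jj}) \ge 0$, and the left inequality follows by substituting $\dist(u_\ell^{k,\jj}, u_\ell^{k-1,\jj}) = \dist(u_\ell^\star, u_\ell^{k-1,\jj}) - \dist(u_\ell^\star, u_\ell^{k,\jj}) \ge (\tfrac{1}{\qnrg}-1)\dist(u_\ell^\star, u_\ell^{k,\jj})$, using \eqref{prop:linearization:eq2}. The main obstacle I anticipate is the bookkeeping in the second step: making sure the contribution of the algebraic error (the gap between $u_\ell^{k,\jj}$ and $u_\ell^{k,\star}$) is absorbed cleanly without a smallness condition on $\llin$, which is exactly where the uniform norm-energy estimate \eqref{eq:norm_energy_estimate_finalj}, rather than an estimator-based stopping test, does the essential work.
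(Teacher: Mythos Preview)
Your plan is correct and coincides with the paper's strategy in its overall architecture: reduce \eqref{prop:linearization:eq2} via additivity to the lower bound $\dist(u_\ell^{k,\jj},u_\ell^{k-1,\jj})\ge (1-\qnrg)\,\dist(u_\ell^\star,u_\ell^{k-1,\jj})$, then chain the norm--energy bound \eqref{eq:norm_energy_estimate_finalj}, the algebraic contraction plus nested iteration (giving $\enorm{u_\ell^{k,\star}-u_\ell^{k-1,\jj}}\le (1-\qalg)^{-1}\enorm{u_\ell^{k,\jj}-u_\ell^{k-1,\jj}}$), a passage from $u_\ell^{k,\star}$ to $u_\ell^\star$, and finally energy equivalence \eqref{eq:energy}. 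Step~3 is verbatim the paper's derivation of \eqref{prop:linearization:eq2+}.

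The one difference is in the ``passage from $u_\ell^{k,\star}$ to $u_\ell^\star$'': the paper does \emph{not} go through Lemma~\ref{lemma:energy-identity}. Instead it tests strong monotonicity \eqref{def:assumptions_operator} with $u_\ell^\star-u_\ell^{k-1,\jj}$, uses the exact discrete equation \eqref{eq:exact_solution_disc} and the linearization identity \eqref{eq:linearization:exact} to rewrite the duality as $a(u_\ell^{k-1,\jj};u_\ell^{k,\star}-u_\ell^{k-1,\jj},u_\ell^\star-u_\ell^{k-1,\jj})$, and then applies continuity \eqref{eq:linearization:bounds} to obtain $\alpha\,\enorm{u_\ell^\star-u_\ell^{k-1,\jj}}\le \Ccont\,\enorm{u_\ell^{k,\star}-u_\ell^{k-1,\jj}}$. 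Combined with the previous chain and $\dist(u_\ell^\star,u_\ell^{k-1,\jj})\le \tfrac{L}{2}\enorm{u_\ell^\star-u_\ell^{k-1,\jj}}^2$, this gives the stated constant exactly. Your proposed use of \eqref{eq2:lemma:energy-identity} with $v_\ell=u_\ell^{k,\jj}$ is superfluous (the norm--energy estimate \eqref{eq:norm_energy_estimate_finalj} already delivers the needed lower bound on $\dist(u_\ell^{k,\jj},u_\ell^{k-1,\jj})$), and \eqref{eq3:lemma:energy-identity} can serve as an alternative to the strong-monotonicity step but is not how the paper proceeds.
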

\begin{proof}
	Nested iteration $u_\ell^{k,0} = u_\ell^{k-1, \jj}$, algebraic 
	contraction~\eqref{eq:alg:sol:contraction}, and $\jj[\ell,k] \ge 1$ yields
	\begin{equation*}
		\enorm{u_\ell^{k,\star} - u_\ell^{k,\jj}}
		\eqreff{eq:alg:sol:contraction}
		\le
		\qalg^{\jj[\ell,k]} \, \enorm{u_\ell^{k,\star} - u_\ell^{k,0}}
		\le
		\qalg \, \enorm{u_\ell^{k,\star} - u_\ell^{k,0}}
		=
		\qalg \, \enorm{u_\ell^{k,\star} - u_\ell^{k-1,\jj}}
	\end{equation*}
	and hence, together with a triangle inequality and 
	Proposition~\ref{prop:unif-steps}, this leads to
	\begin{equation}\label{eq1:linearization}
		\CCeng (1-\qalg)^2 \, \enorm{u_\ell^{k,\star} - u_\ell^{k-1,\jj}}^2
		\le
		\CCeng \, \enorm{u_\ell^{k,\jj} - u_\ell^{k-1,\jj}}^2
		\eqreff{eq:norm_energy_estimate_finalj}
		\le
		\dist(u_\ell^{k,\jj}, u_\ell^{k-1,\jj}).
	\end{equation}
	Next, note that
	\begin{align*}
		&\alpha \, \enorm{u_\ell^\star - u_\ell^{k-1,\jj}}^2
	\stackrel{\eqref{def:assumptions_operator},\eqref{eq:exact_solution_disc}}
		\le
		F(u_\ell^\star - u_\ell^{k-1,\jj})
		- \dual{
			\AA u_\ell^{k-1,\jj}
			}{
			u_\ell^\star - u_\ell^{k-1,\jj}
			}_{\XX' \times \XX}
		\\& \quad
		\eqreff*{eq:linearization:exact}
		=
		a(
			u_\ell^{k-1,\jj}; u_\ell^{k,\star} -u_\ell^{k-1,\jj},
			u_\ell^\star - u_\ell^{k-1,\jj}
		)
		\eqreff{eq:linearization:bounds}
		\le
		\Ccont \, \enorm{u_\ell^{k,\star} - u_\ell^{k-1,\jj}} \,
		\enorm{u_\ell^\star - u_\ell^{k-1,\jj}}.
	\end{align*}
	Together with~\eqref{eq1:linearization}, we obtain 
	\begin{equation}\label{eq2:linearization}
		0
		\le
		C \, \enorm{u_\ell^\star - u_\ell^{k-1,\jj}}^2
		\le
		\dist(u_\ell^{k,\jj}, u_\ell^{k-1,\jj})
		\quad \text{with }
		C \coloneqq \frac{\alpha^2 (1-\qalg)^2}{\Ccont^2} \, \CCeng
		> 0.
	\end{equation}
	Following the idea of \cite[Theorem~2.1]{hpw2021}, we show 
	\eqref{prop:linearization:eq2} with $0 < \qnrg \le 1 - 2C / L$
		\begin{align*}
			0
			\le \dist(u_\ell^\star, u_\ell^{k,\jj})
			&\eqreff*{eq:pythagoras_discrete}
			=
			\dist(u_\ell^\star, u_\ell^{k-1,\jj})
			- \dist(u_\ell^{k,\jj}, u_\ell^{k-1,\jj})
			\stackrel{\eqref{eq2:linearization}, \eqref{eq:energy}}
			\le
			[1 - 2C / L] \, \dist(u_\ell^\star, u_\ell^{k-1,\jj}).
		\end{align*}
		The proof of~\eqref{prop:linearization:eq2+} follows by
		using \eqref{prop:linearization:eq2}, \eqref{eq:pythagoras_discrete}, 
		and \eqref{eq:minimization}. This concludes the proof.
\end{proof}
\begin{proof}[\textbf{Proof of Theorem~\ref{theorem:linearconv}}]The proof 
follows along~\cite[Proof of Theorem~4]{bfmps2025}. Following, 
e.g.,~\cite{axioms} and \cite[Lemma~2]{bfmps2025}, there holds equivalence of 
tail summability and linear convergence of the quasi-error: For all 
$(\ell'\!,k'\!,j'),(\ell,k,j) \in \QQ$ with $|\ell'\!,k'\!,j'| \! > |\ell,k,j|$, 
there holds 
	\begin{equation*}
		\sum_{
			\substack{(\ell',k',j') \in \QQ
				\\
				|\ell',k',j'| > |\ell,k,j|
			}
		}
		\Eta_{\ell'}^{k',j'}
		\le C_{\textup{sum}}
		\Eta_\ell^{k,j} 
		\quad
		\Longleftrightarrow 
		\quad
		\Eta_{\ell'}^{k',j'}
		\le
		\Clin
		{\qlin}^{|\ell'\!,k'\!,j'|-|\ell,k,j|} \, \!
		\Eta_\ell^{k,j},
	\end{equation*}
	where \(\Clin = 1 + C_{\textup{sum}}\) and \(q_{\textup{lin}} = C_{\textup{sum}} / (1 + C_{\textup{sum}})\).
	Thus, we proceed to show tail summability of the quasi-error.

	\textbf{
		Step~1
		(Quasi-monotonicity of $\boldsymbol{\Eta_\ell^{k,j}}$ in ${\bm j}$).
		}
	For all $0 \le j \le j' \le \jj [\ell, k]$, solver
	contraction~\eqref{eq:alg:sol:contraction} yields
	\begin{align}
		\Eta_\ell^{k,j'}
		&=
		\eta_\ell(u_\ell^{\star})
		+ \enorm{u_\ell^\star - u_\ell^{k,\star}}
		+ \enorm{u_\ell^{k,\star} - u_\ell^{k,j'}}
		\nonumber
		\\
		&\eqreff*{eq:alg:sol:contraction}
		\le
		\eta_\ell(u_\ell^{\star})
		+ \enorm{u_\ell^\star - u_\ell^{k,\star}}
		+ \qalg^{j-j'} \enorm{u_\ell^{k,\star} - u_\ell^{k,j}}
		\le  \Eta_\ell^{k,j}. \label{eq:algHlkj}
	\end{align}
	Since Proposition~\ref{prop:unif-steps} ensures that the total number of
	algebraic solver steps is uniformly bounded, i.e.,
	$\jj [\ell, k] \le j_0 < \infty $ for all $(\ell, k,0) \in \QQ$, the
	previous estimate proves
	\begin{equation}\label{eq:quasimonEta}
		\medmuskip = -1mu
		\sum_{j'= j}^{\jj[\ell, k]} \Eta_\ell^{k,j'}
		\,
		\eqreff*{eq:algHlkj}
		\le \,
		\bigl(\jj[\ell, k] +1 \bigr)  \, \Eta_\ell^{k,j}
		\le
		\bigl(j_0 + 1 \bigr)  \, \Eta_\ell^{k, j}
		\
		\text{for all \((\ell, k, j) \in \QQ\)}.
	\end{equation}

	\textbf{Step~2 (stability of energy-based quasi-error).}
	Consider an energy-based
	quasi-error for the final algebraic solver step
	%
	$	\Eta_\ell^k
		\coloneqq
		\dist( u_\ell^{\star},u_\ell^{k,\jj})^{1/2}
		+ \eta_\ell(u_\ell^{k,\jj}) 
		\text{ for all} \
		(\ell,k,\jj) \in \QQ. $
	%
	First, using stability~\eqref{axiom:stability} and a 
	triangle inequality, one obtains 
	for all $(\ell,k,\jj) \in \QQ$ 
	\begin{equation}\label{eq:Eta_nrg_UB}
		\Eta_\ell^{k}
		\eqreff{eq:energy}
		\le
		(L/2)^{1/2}  \, \enorm{u_\ell^\star - u_\ell^{k,\jj}}
		+ \eta_\ell(u_\ell^{k,\jj})
		\lesssim
		\Eta_\ell^{k, \jj}.
	\end{equation}
	Next, recall that
	$u_\ell^{0,0} = u_\ell^{0,\jj} = u_\ell^{0,\star}$ by definition and 
	use of stability~\eqref{axiom:stability} leads to
	\begin{equation*}
		\Eta_{\ell}^{0, 0}
		\eqreff{axiom:stability}
		\simeq
		\enorm{u_\ell^\star - u_\ell^{0, 0}} + \eta_\ell(u_\ell^{0, 0})
		\eqreff{eq:energy}
		\le
		(2/\alpha)^{1/2} \, \dist(u_\ell^\star, u_\ell^{0, 0})^{1/2}
		+ \eta_\ell(u_\ell^{0, 0})
		\simeq
		\Eta_{\ell}^0.
	\end{equation*}
	For $k \ge 1$, using $u_\ell^{k,0} = u_\ell^{k-1,\jj}$, 
	stability~\eqref{axiom:stability},
	equivalence~\eqref{eq:energy}, and contraction~\eqref{eq:energy-contraction}
	imply
	\begin{align*}
		\Eta_{\ell}^{k, 0} \,
		&\eqreff*{axiom:stability}\le
		2 \, \enorm{u_\ell^\star - u_\ell^{k,\star}}
		+ (\Cstab +1) \, \enorm{u_\ell^\star - u_\ell^{k-1,\jj}}
		+ \eta_{\ell}(u_\ell^{k-1, \jj})
		\\
		&\eqreff*{eq:energy}
		\le
		2 \,  (\Cstab +1) \, (2/\alpha)^{1/2} \,
		\bigl[
			\dist(u_{\ell}^{\star},u_{\ell}^{k,\star}) +
			\dist(u_{\ell}^{\star},u_{\ell}^{k-1,\jj})
		\bigr]^{1/2}
		+ \eta_{\ell}(u_\ell^{k-1, \jj})
		\eqreff*{eq:energy-contraction}
		\lesssim \,
		\Eta_\ell^{k-1}.
	\end{align*}
	Therefore, we derive that
	\begin{equation}\label{eq1:step9}
		\Eta_\ell^{k,0}
		\lesssim \,
		\Eta_\ell^{(k-1)_+}
		\quad \text{for all }
		(\ell,k,0) \in \QQ,
		\quad \text{where }
		(k-1)_+ \coloneqq \max\{0, k-1 \}.
	\end{equation}

	\textbf{
		Step~3.
		(tail-summability of $\boldsymbol{\Eta_\ell^{k}}$ with respect to
		$\boldsymbol{\ell}$ and $\boldsymbol{k}$).
		}
	Thanks to~\cite[Theorem~4]{ghps2021}, there exist
	$\gamma>0$, $\widetilde{C}_{\textup{lin}} > 0$, and
	$0 < \widetilde q_{\rm lin} < 1$ depending only on
	$L, \, \alpha, \, \qred, \, \qnrg,$ $\Cstab, \, \Crel, \, \theta, \,
	\lambda_{\rm lin}$ such that the weighted quasi-error quantity
	\begin{subequations}\label{eq:lambda-sumlk}
		\begin{equation}
			\Lambda_\ell^{k}
			\coloneqq
			\bigl[
				\dist(u^{\star},u_{\ell}^{k,\jj})
				+ \gamma \eta_{\ell}(u_{\ell}^{k,\jj})^{2}
			\bigr]^{1/2}
		\end{equation}
		satisfies
		\begin{equation}
			\Lambda_{\ell'}^{k'}
			\le
			\widetilde{C}_{\rm lin}
			\widetilde{q}_{\rm lin}^{|\ell',k'|-|\ell,k|} \,
			\Lambda_{\ell}^{k}
			\quad \text{for all} \
			(\ell,k,\jj), \,
			(\ell',k',\jj') \in \QQ,
			\text{ with }
			|\ell',k'|>|\ell,k|
		\end{equation}
	\end{subequations}
	with respect to the reduced counter
	\(
		| \ell', k' |
		\coloneqq
		\# \{(\ell, k, \jj) \colon |\ell, k, \jj | < |\ell', k', \jj |\}
	\).
	Note that
	\begin{equation*}
		\dist(u^{\star}_\ell,u_{\ell}^{k,\jj})
		\eqreff{eq:pythagoras_discrete}=   \dist(u^{\star},u_{\ell}^{k,\jj}) -
		\dist(u^{\star},u_{\ell}^\star)
		\eqreff{eq:minimization}\le  \dist(u^{\star},u_{\ell}^{k,\jj})
	\end{equation*}
	and
	\begin{align*}
		\dist(u^{\star},u_{\ell}^{k,\jj})
		&\eqreff*{eq:pythagoras_discrete}
		=
		\dist(u^{\star},u_{\ell}^\star)
		+ \dist(u^{\star}_\ell,u_{\ell}^{k,\jj})
		\eqreff*{eq:energy}
		\lesssim
		\enorm{u^\star - u_\ell^\star }^2
		+ \dist(u^{\star}_\ell,u_{\ell}^{k,\jj})
		\\&
		\eqreff*{axiom:reliability}
		\lesssim
		\eta_\ell(u_\ell^\star)^2
		+ \dist(u^{\star}_\ell,u_{\ell}^{k,\jj})
		\eqreff{axiom:stability}
		\lesssim
		\eta_\ell(u_\ell^{k,\jj})^2
		+ \enorm{u^{\star}_\ell - u_{\ell}^{k,\jj}}^2
		+ \dist(u^{\star}_\ell,u_{\ell}^{k,\jj})
		\\&
		\eqreff*{eq:energy}
		\lesssim
		\eta_\ell(u_\ell^{k,\jj})^2 + \dist(u^{\star}_\ell,u_{\ell}^{k,\jj}).
	\end{align*}
	Since $\gamma$ is fixed, the above two estimates lead to
	\begin{equation}\label{eq:etalam}
		\Eta_\ell^{k}
		\simeq
		\Lambda_\ell^{k}
		\quad \text{for all} \
		(\ell,k,\jj) \in \QQ.
	\end{equation}
	With the geometric series, this yields
	\begin{equation}\label{eq:step8}
		\sum_{
			\substack{
				(\ell',k',\jj) \in \QQ
				\\
				|\ell',k',\jj| > |\ell,k,\jj|
			}
		}
		\Eta_{\ell'}^{k'}
		\eqreff{eq:etalam}
		\simeq
		\sum_{
			\substack{
				(\ell',k',\jj) \in \QQ
				\\
				|\ell',k',\jj| > |\ell,k,\jj|
			}
		}
		\Lambda_{\ell'}^{k'}
		\eqreff{eq:lambda-sumlk}
		\lesssim
		\Lambda_\ell^k
		\eqreff{eq:etalam}
		\simeq
		\Eta_\ell^k
		\quad \text{for all }
		(\ell,k,\jj) \in \QQ.
	\end{equation}

	\textbf{
		Step~4 (tail-summability of $\boldsymbol{\Eta_\ell^{k,j}}$ with
		respect to $\boldsymbol{\ell}$, $\boldsymbol{k}$, and
		$\boldsymbol{j}$).
		}
	Finally, for $(\ell,k,j) \in \QQ$, we observe that
	\begin{align*}
		&\sum_{
			\substack{(\ell',k',j') \in \QQ
				\\
				|\ell',k',j'| > |\ell,k,j|
			}
		}
		\Eta_{\ell'}^{k',j'}
		=
		\sum_{j'=j+1}^{\jj[\ell,k]} \Eta_{\ell}^{k,j'}
		+ \sum_{k'=k+1}^{\kk[\ell]} \sum_{j'=0}^{\jj[\ell,k']} \Eta_\ell^{k',j'}
		+ \sum_{\ell' = \ell+1}^\elll \sum_{k'=0}^{\kk[\ell']}
			\sum_{j'=0}^{\jj[\ell',k']} \Eta_{\ell'}^{k',j'}
		\\& \qquad
		\eqreff{eq:quasimonEta}
		\lesssim
		\Eta_{\ell}^{k,j}
		+ \sum_{k'=k+1}^{\kk[\ell]} \Eta_\ell^{k',0}
		+ \sum_{\ell' = \ell+1}^\elll \sum_{k'=0}^{\kk[\ell]}
			\Eta_{\ell'}^{k',0}
		\\& \qquad
		\eqreff{eq1:step9}
		\lesssim
		\Eta_{\ell}^{k,j}
		+ \sum_{k'=k}^{\kk[\ell]-1} \Eta_\ell^{k'}
		+ \sum_{\ell' = \ell+1}^\elll \sum_{k'=0}^{\kk[\ell]-1}
			\Eta_{\ell'}^{k'}
		\\& \qquad \
		\lesssim
		\Eta_{\ell}^{k,j} + \Eta_\ell^{k}
		+ \sum_{
			\substack{
				(\ell',k',\jj) \in \QQ
				\\
				|\ell',k',\jj| > |\ell,k,\jj|
				}
			}
			\Eta_{\ell'}^{k'}
		\eqreff{eq:step8}
		\lesssim
		\Eta_\ell^{k,j} + \Eta_\ell^{k}
		\eqreff{eq:Eta_nrg_UB}
		\lesssim
		\Eta_\ell^{k,j} + \Eta_\ell^{k,\jj}
		\eqreff{eq:algHlkj}
		\lesssim
		\Eta_\ell^{k,j}.
	\end{align*}
	This tail-summability is equivalent to R-linear
	convergence~\eqref{eq:Rlin:convergence} of $\Eta_\ell^{k,j}$,
	see~\cite[Lemma~2]{bfmps2025} and thus concludes the proof.
\end{proof}
Full R-linear convergence has the following crucial consequence that
follows from the geometric series; see, e.g.~\cite{aisfem}: If the rate of
convergence $s>0$ is achievable with respect to the degrees of freedom, it is
also achievable with respect to the cumulative computational costs. Hence, full
linear convergence is the key to optimal complexity.
\begin{corollary}
	[rates = complexity {\cite[Corollary~13]{bfmps2025}}]
	\label{corollary:rates:complexity}
	Suppose full R-linear convergence~\eqref{eq:Rlin:convergence}. Then, for
	any $s > 0$, it holds that
	\begin{equation}\label{eq:equiv-cost-rates}
		\medmuskip = -4mu
		M(s)
		\hspace{-0.1cm}
		\coloneqq
		\hspace{-0.2cm}
		\sup_{(\ell,k,j) \in \QQ} (\#\TT_\ell)^s \, \Eta_\ell^{k,j}
		\le
		\hspace{-0.2cm}
		\sup_{(\ell,k,j) \in \QQ}
		\Bigl(
			\hspace{-0.2cm}
			\sum_{
				\substack{
					(\ell',k',j') \in \QQ
					\\
					|\ell',k',j'| \le |\ell,k,j|
				}
			}
			\hspace{-0.4cm}
			\#\TT_{\ell'}
		\Bigr)^s
		\Eta_\ell^{k,j}
		\le
		\frac{\Clin}{\bigl(1 \ - \ {\qlin}^{1/s}\bigr)^s} \, M(s).
	\end{equation}
	Moreover, there exists $s_0 > 0$ such that
	\begin{equation}\label{eq:corMs}
		M(s) < \infty
		\quad \text{for all }
		0 < s \le s_0.\qed
	\end{equation}
\end{corollary}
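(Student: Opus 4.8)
The plan is to read both inequalities in~\eqref{eq:equiv-cost-rates} off full R-linear convergence~\eqref{eq:Rlin:convergence}, and then to establish $M(s) < \infty$ for small $s$ by combining~\eqref{eq:Rlin:convergence} with the geometric growth of $\#\TT_\ell$ in $\ell$. The first inequality needs no work: for every $(\ell,k,j) \in \QQ$ the term $\#\TT_\ell$ itself occurs among the summands of $\sum_{|\ell',k',j'| \le |\ell,k,j|} \#\TT_{\ell'}$ (namely for $(\ell',k',j') = (\ell,k,j)$), whence $\#\TT_\ell \le \sum_{|\ell',k',j'| \le |\ell,k,j|} \#\TT_{\ell'}$ and thus the left inequality of~\eqref{eq:equiv-cost-rates}.

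For the right inequality I would argue as follows. We may assume $\Clin \ge 1$ and $M(s) < \infty$, since otherwise the estimate is vacuous. Fix $(\ell,k,j) \in \QQ$ and assume $\Eta_\ell^{k,j} > 0$ (if the quasi-error vanishes, the corresponding term is zero). Because the total step number identifies $\QQ$ with an initial segment of $\mathbb{N}_0$, for each $0 \le n \le |\ell,k,j|$ there is a unique $(\ell_n,k_n,j_n) \in \QQ$ with $|\ell_n,k_n,j_n| = n$. Applying~\eqref{eq:Rlin:convergence} (the hypothesis $\Clin \ge 1$ also covers $n = |\ell,k,j|$) gives $\Eta_\ell^{k,j} \le \Clin\,\qlin^{\,|\ell,k,j|-n}\,\Eta_{\ell_n}^{k_n,j_n}$, and together with the defining bound $(\#\TT_{\ell_n})^s\,\Eta_{\ell_n}^{k_n,j_n} \le M(s)$ this yields the per-step estimate
\begin{equation*}
	\#\TT_{\ell_n}
	\le
	\Big( \frac{\Clin\,M(s)}{\Eta_\ell^{k,j}} \Big)^{1/s}
	\qlin^{\,(|\ell,k,j|-n)/s}
	\qquad \text{for } 0 \le n \le |\ell,k,j|.
\end{equation*}
Summing over $n$, using the geometric series $\sum_{i \ge 0} \qlin^{\,i/s} = (1 - \qlin^{1/s})^{-1}$, then raising to the power $s$ and multiplying by $\Eta_\ell^{k,j}$, one obtains $\big( \sum_{|\ell',k',j'| \le |\ell,k,j|} \#\TT_{\ell'} \big)^s\,\Eta_\ell^{k,j} \le \Clin\,(1 - \qlin^{1/s})^{-s}\,M(s)$; taking the supremum over $(\ell,k,j) \in \QQ$ closes the right inequality of~\eqref{eq:equiv-cost-rates}.

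To show $M(s) < \infty$ for small $s$, I would invoke two facts. First, newest-vertex bisection increases the number of elements by at most a fixed factor per refinement, $\#\TT_{\ell+1} \le \Cchild\,\#\TT_\ell$ for some constant $\Cchild > 1$, so that $\#\TT_\ell \le \Cchild^{\ell}\,\#\TT_0$. Second, since the indices $(0,0,0), (1,0,0), \dots, (\ell-1,0,0)$ all precede $(\ell,k,j)$ in $\QQ$, we have $|\ell,k,j| \ge \ell$, hence~\eqref{eq:Rlin:convergence} gives $\Eta_\ell^{k,j} \le \Clin\,\qlin^{\,|\ell,k,j|}\,\Eta_0^{0,0} \le \Clin\,\qlin^{\ell}\,\Eta_0^{0,0}$, uniformly in $(k,j)$. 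Multiplying,
\begin{equation*}
	(\#\TT_\ell)^s\,\Eta_\ell^{k,j}
	\le
	\Clin\,(\#\TT_0)^s\,\Eta_0^{0,0}\,\big( \Cchild^{s}\,\qlin \big)^{\ell},
\end{equation*}
which is bounded by $\Clin\,(\#\TT_0)^s\,\Eta_0^{0,0}$ as soon as $\Cchild^{s}\,\qlin \le 1$, i.e.\ for all $0 < s \le s_0 \coloneqq \ln(1/\qlin)/\ln \Cchild$. Taking the supremum over $(\ell,k,j) \in \QQ$ then gives $M(s) < \infty$ for all such $s$. The only genuinely delicate point is the circular appearance of $M(s)$ in the cost estimate: this is harmless because the bound is vacuous when $M(s) = \infty$, while for finite $M(s)$ the per-step estimate on $\#\TT_{\ell_n}$ is a genuine a~priori bound. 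The remaining bookkeeping (the normalisation $\Clin \ge 1$ needed to absorb the $n = |\ell,k,j|$ term, and the trivial treatment of steps with vanishing quasi-error) is routine, and everything else reduces to a geometric series together with the standard geometric growth of $\#\TT_\ell$.
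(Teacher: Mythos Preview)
Your proof is correct and is precisely the geometric-series argument the paper has in mind. Note that the paper itself does not prove this corollary: it is quoted verbatim from \cite[Corollary~14]{afem+sum_preprint} and closed with a \(\qed\), the only hint being the sentence ``follows from the geometric series; see, e.g.~\cite{aisfem}'' in the preceding paragraph. Your argument supplies exactly that missing content: the trivial lower bound, the per-step estimate \(\#\TT_{\ell_n} \le (\Clin M(s)/\Eta_\ell^{k,j})^{1/s}\,\qlin^{(|\ell,k,j|-n)/s}\) obtained by combining \eqref{eq:Rlin:convergence} with the definition of \(M(s)\), the geometric sum, and finally the balance \(\Cchild^{s}\qlin \le 1\) (using the NVB child bound \(\#\TT_{\ell+1}\lesssim \#\TT_\ell\) invoked in the paper in Step~1 of the proof of Theorem~\ref{th:optimal_complexity}) to extract \(s_0\). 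All the bookkeeping you flag---the harmless WLOG \(\Clin\ge 1\) to absorb \(n=|\ell,k,j|\), the vacuity when \(M(s)=\infty\), and the observation that \(\Eta_\ell^{k,j}>0\) forces all earlier quasi-errors to be positive by \eqref{eq:Rlin:convergence}---is handled cleanly.
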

As a second corollary to full linear convergence, we can characterize the limit
of
Algorithm~\ref{algorithm} for \(\elll < \infty\) and infinitesimal tolerance
$\tau =0$ in the stopping
criterion~(\hyperref[eq:st_crit_algorithm]{Alg2}).
%
\begin{corollary}\label{corollary_0tol}
	Suppose the assumptions of Theorem~\ref{theorem:linearconv} hold and
	$\tau = 0$. If $\elll < \infty$, then
	\begin{align}
		\text{either } \quad
		&\kk [\elll] < \infty
		\text{ and }
		u_\elll^{\kk, \jj} = u_\elll^\star = u^\star
		\text{ with }
		\eta_\elll (u_\elll^{\kk, \jj}) = 0,
		\\
		\text{or } \quad
		&\kk [\elll] = \infty
		\text{ and }
		u_\elll^{k, \jj} \neq u_\elll^\star = u^\star
		\text{ for all }
		k \in \N
		\text{ with }
		\eta_\elll (u_\elll^\star) = 0.
		\label{eq:case_k_infty}
	\end{align}
\end{corollary}
\begin{proof}
	While the proof of~\eqref{eq:case_k_infty} is included 
	in~\cite[Corollary~6]{ghps2021}, we provide a proof of the first statement.
	If $\elll < \infty $ and $\kk [\elll] < \infty$, then $\# \QQ < \infty$ by
	Proposition~\ref{prop:unif-steps}. Hence, Algorithm~\ref{algorithm}
	terminates at Step~(i.b.III) with
	$
		\eta_\ell(u_\ell^{k,j})
		+ \enorm{u_\elll^{\kk,\jj} - u_\elll^{\kk-1,\jj}}
		+ \enorm{u_\elll^{\kk,\jj} - u_\elll^{\kk,\jj-1}}
		= 0
	$.
	Therefore, Proposition~\ref{prop:aposteriori-jk} yields
	$u^\star = u_\elll^{\kk,\jj}$ and the Céa lemma~\eqref{eq:cea} also yields
	$u_\elll^\star = u_\elll^{\kk,\jj}$. 
\end{proof}

\section{Quasi-optimal computational costs}\label{sec:optimality}

To describe whether the
solution $u^\star$ can be approximated at rate $s>0$, we use the notion of
nonlinear approximation classes~\cite{bdd2004, stevenson2007, ckns2008, axioms} 
\begin{equation*}
	\norm{u^\exact}_{\A_s}
	\coloneqq
	\sup_{N \in \N_0}
	\Bigl(
		\bigl( N+1 \bigr)^s
		\min_{\TT_{\rm opt} \in \T_N } \eta_{\rm opt}(u^\star_{\rm opt})
	\Bigr),
\end{equation*}
where $\eta_{\rm opt}(u^\star_{\rm opt})$ is the estimator for
the (unavailable) exact Galerkin solution \(u_{\textup{opt}}^\star\) on
an optimal $\TT_{\rm opt} \in \T_N$. The following theorem is the second main
result of this work, it states that for sufficiently small 
mesh-refinement parameter $\theta$ and linearization parameter $\llin$, the 
adaptive algorithm leads to optimal (in the sense of the nonlinear
approximation classes) decrease of the quasi-error with respect to 
overall computational cost.
The proof can be derived following steps analogous to~\cite[Proof of
Theorem~3]{aisfem}. For details, we refer to the Appendix~\ref{appendix}.
\begin{theorem}[optimal complexity]\label{th:optimal_complexity}
	Consider arbitrary parameters $0 < \theta \le 1$, $1 \le \Cmark$,
	$0 < \llin$, $0 < \rho < 1$, \(\alpha_{\min} > 0\), \(J_{\max} \in \N\),
	$\tau = 0$, and arbitrary \(u_0^{0,0} \in \XX_0\).
	Suppose that the estimator satisfies
	\eqref{axiom:stability}--\eqref{axiom:discrete_reliability}.
	Let
	\begin{align}\label{eq:lambdalin}
		\begin{split}
			\lambda_{\rm lin}^\star
			&\coloneqq
			\min
			\{1, \Cstab^{-2} \, C_{\textup{solve}}^{-2}\}
			\quad
			\text{with}
			\quad
			C_{\textup{solve}}
			\coloneqq
			\Bigl(\frac{\qnrg}{1-\qnrg}\Bigr)^{1/2}\, (2/ \alpha)^{1/2},
			\\
			\theta^\star
			&\coloneqq
			(1 + \Cstab^2 \, \Cdrel^2)^{-1}.
		\end{split}
	\end{align}
	Suppose that $\theta$, $\lambda_{\rm lin} $ are sufficiently small
	in the sense of
	\begin{equation}\label{eq:thetamark}
		0 < \lambda_{\rm lin} < \lambda_{\rm lin}^\star
		\quad
		\text{and}
		\quad
		0
		<
		\thetamark
		\coloneqq
		\frac{
			(\theta^{1/2}
			+ \, (\lambda_{\rm lin} / {\lambda_{\rm lin}^\star})^{1/2})^{2}
			}{
			(1 - (\lambda_{\rm lin} / {\lambda_{\rm lin}^\star})^{1/2})^{2}
			}
		<
		\theta^\exact
		<
		1.
	\end{equation}
	Then, Algorithm~\ref{algorithm} guarantees, for all $s > 0$, that
	\begin{equation}\label{eq:optimal_complexity}
		\copt \, \norm{u^\exact}_{\A_s}
		\le
		\sup_{(\ell,k, j) \in \QQ}
		\Bigl(
			\sum_{
				\substack{
					(\ell', k', j') \in \QQ
					\\
					|\ell', k', j'| \le |\ell, k, j|
					}
				}
				\#\TT_{\ell'}
		\Bigr)^{s} \,
		\Eta_\ell^{k,j}
		\le
		\Copt \, \max\{ \norm{u^\exact}_{\A_s}, \, \Eta_{0}^{0,0}\}.
	\end{equation}
	The constant \(\copt > 0\) depends only on \(\Cstab\), the use of
	NVB refinement in \(\R^d\), and \(s\); while the constant $\Copt$ depends
	only on $\Cstab$, $\Cdrel$, $\Cmark$, $\Cmesh$, \(\alpha\), \(\qalg\),
	\(C_{\textup{nrg}}^\prime\), \(\lambda_{\textup{lin}}\), \(\qnrg\),
	$\Clin$, $\qlin$, $\# \TT_{0}$, and $s$.
	In particular, there holds optimal complexity of Algorithm~\ref{algorithm}.
\end{theorem}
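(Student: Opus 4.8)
The proof follows the by-now classical route to optimal complexity (cf.~\cite{stevenson2007,ckns2008,axioms,hpsv2021,afem+sum_preprint}), adapted to the inexact triple loop by feeding in the R-linear convergence of Theorem~\ref{theorem:linearconv} and the cost-vs-degrees-of-freedom equivalence of Corollary~\ref{corollary:rates:complexity}. First I would dispose of the degenerate situations of Corollary~\ref{corollary_0tol}: if $\elll<\infty$, then either $\#\QQ<\infty$ or $\eta_\elll(u_\elll^\star)=0$, so the cumulative cost $\sum_{(\ell',k',j')\in\QQ}\#\TT_{\ell'}$ is finite and, together with $\Eta_\elll^{\kk,\jj}=0$ respectively $\eta_\elll(u_\elll^\star)=0$, the estimate~\eqref{eq:optimal_complexity} is readily checked. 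Hence assume $\elll=\infty$ from now on; if additionally $\norm{u^\exact}_{\A_s}=\infty$ the upper bound in~\eqref{eq:optimal_complexity} is vacuous, so we may also assume $\norm{u^\exact}_{\A_s}<\infty$.

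Next I would collect the estimator equivalences that consume the smallness $\lambda_{\rm lin}<\lambda_{\rm lin}^\star$. On each level $\ell$, the linearization stopping criterion~\eqref{eq:st_crit_pic}, Proposition~\ref{prop:contrlin}, and the norm--energy estimate~\eqref{eq:norm_energy_estimate_finalj} give $\dist(u_\ell^\star,u_\ell^{\kk,\jj})\le\tfrac{\qnrg}{1-\qnrg}\,\dist(u_\ell^{\kk,\jj},u_\ell^{\kk-1,\jj})\le\tfrac{\qnrg}{1-\qnrg}\,\lambda_{\rm lin}\,\eta_\ell(u_\ell^{\kk,\jj})^2$, so by~\eqref{eq:energy} one obtains $\enorm{u_\ell^\star-u_\ell^{\kk,\jj}}\le C_{\textup{solve}}\,\lambda_{\rm lin}^{1/2}\,\eta_\ell(u_\ell^{\kk,\jj})$ with $C_{\textup{solve}}$ from~\eqref{eq:lambdalin}, whence via stability~\eqref{axiom:stability} and $\lambda_{\rm lin}^\star=\min\{1,\Cstab^{-2}C_{\textup{solve}}^{-2}\}$
\[
	\bigl(1-(\lambda_{\rm lin}/\lambda_{\rm lin}^\star)^{1/2}\bigr)\,\eta_\ell(u_\ell^\star)
	\le \eta_\ell(u_\ell^{\kk,\jj})
	\le \bigl(1+(\lambda_{\rm lin}/\lambda_{\rm lin}^\star)^{1/2}\bigr)\,\eta_\ell(u_\ell^\star).
\]
Together with~\eqref{eq2:alg:sol:contraction}, \eqref{eq:norm_energy_estimate_finalj}, and~\eqref{eq:st_crit_pic} this also yields $\Eta_\ell^{\kk,\jj}\simeq\eta_\ell(u_\ell^{\kk,\jj})$, which will let me freely interchange the quasi-error at the end of a mesh level with the estimator.

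The core step is the comparison/optimality-of-marking lemma, and this is the main obstacle, since it must simultaneously accommodate the perturbation of the estimator by the inexact solve and keep all constants compatible with~\eqref{eq:thetamark}. For each $\ell<\elll$ with $\eta_\ell(u_\ell^\star)>0$, I would choose $N=N(\ell)\in\N_0$ minimal so that $\min_{\TT_{\rm opt}\in\T_N}\eta_{\rm opt}(u_{\rm opt}^\star)$ falls below a fixed fraction of $\eta_\ell(u_\ell^\star)$ governed by $\theta^\exact=(1+\Cstab^2\Cdrel^2)^{-1}$; minimality and the definition of $\norm{u^\exact}_{\A_s}$ give $N+1\lesssim\norm{u^\exact}_{\A_s}^{1/s}\,\eta_\ell(u_\ell^\star)^{-1/s}$. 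Overlaying $\TT_\ell$ with such an optimal $\TT_{\rm opt}$ and setting $\mathcal R_\ell\coloneqq\TT_\ell\setminus(\TT_\ell\oplus\TT_{\rm opt})$, discrete reliability~\eqref{axiom:discrete_reliability}, stability~\eqref{axiom:stability} on $\TT_\ell\cap(\TT_\ell\oplus\TT_{\rm opt})$, and quasi-monotonicity~\eqref{eq:quasi-monotonicity} show that $\mathcal R_\ell$ satisfies a Dörfler property for $u_\ell^\star$ with parameter bounded in terms of $\theta^\exact$; transferring this to $u_\ell^{\kk,\jj}$ by~\eqref{axiom:stability} and the estimator equivalence of the previous paragraph, the precise bookkeeping of the constants is exactly what forces $\thetamark<\theta^\exact$ and produces $\theta\,\eta_\ell(u_\ell^{\kk,\jj})^2\le\eta_\ell(\mathcal R_\ell,u_\ell^{\kk,\jj})^2$, i.e.\ $\mathcal R_\ell\in\M_\ell[\theta,u_\ell^{\kk,\jj}]$. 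Quasi-minimal cardinality of $\MM_\ell$ from~\eqref{eq:doerfler} and the overlay estimate $\#(\TT_\ell\oplus\TT_{\rm opt})-\#\TT_\ell\le\#\TT_{\rm opt}-\#\TT_0\le N$ then yield $\#\MM_\ell\le\Cmark\#\mathcal R_\ell\lesssim N\lesssim\norm{u^\exact}_{\A_s}^{1/s}\,(\Eta_\ell^{\kk,\jj})^{-1/s}$; the sub-case $\eta_\ell(u_\ell^\star)=0$ is handled through Corollary~\ref{corollary_0tol} as in the first paragraph.

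Finally I would assemble the pieces. The NVB mesh-closure estimate~\cite{bdd2004,stevenson2008} gives $\#\TT_L-\#\TT_0\le\Cmesh\sum_{\ell=0}^{L-1}\#\MM_\ell\lesssim\norm{u^\exact}_{\A_s}^{1/s}\sum_{\ell=0}^{L-1}(\Eta_\ell^{\kk,\jj})^{-1/s}$, and applying R-linear convergence~\eqref{eq:Rlin:convergence} to the subsequence $(\ell,\kk[\ell],\jj)_{\ell}$ turns the last sum into a convergent geometric series bounded, up to a constant depending on $\Clin,\qlin,s$, by $(\Eta_L^{\kk,\jj})^{-1/s}$. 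Hence $(\#\TT_L)^s\,\Eta_L^{\kk,\jj}\lesssim\max\{(\#\TT_0)^s\Eta_0^{0,0},\,\norm{u^\exact}_{\A_s}\}\lesssim\max\{\norm{u^\exact}_{\A_s},\Eta_0^{0,0}\}$, using $\Eta_L^{\kk,\jj}\le\Clin\Eta_0^{0,0}$ from~\eqref{eq:Rlin:convergence}; for a general $(\ell,k,j)\in\QQ$ the child bound $\#\TT_\ell\le\Cchild\,\#\TT_{\ell-1}$ together with $\Eta_\ell^{k,j}\le\Clin\Eta_{\ell-1}^{\kk,\jj}$ (again~\eqref{eq:Rlin:convergence}) reduces to the level-$(\ell-1)$ bound, and the case $\ell=0$ is immediate, so $M(s)\coloneqq\sup_{(\ell,k,j)\in\QQ}(\#\TT_\ell)^s\Eta_\ell^{k,j}\lesssim\max\{\norm{u^\exact}_{\A_s},\Eta_0^{0,0}\}$. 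Corollary~\ref{corollary:rates:complexity} upgrades this to the cost bound, which is the right-hand inequality of~\eqref{eq:optimal_complexity}. The left-hand inequality is the easy direction: for each $\ell$ the mesh $\TT_\ell$ competes in $\T_{\#\TT_\ell-\#\TT_0}$, so $(\#\TT_\ell-\#\TT_0+1)^s\min_{\T_{\#\TT_\ell-\#\TT_0}}\eta_{\rm opt}(u_{\rm opt}^\star)\le(\#\TT_\ell)^s\eta_\ell(u_\ell^\star)\lesssim(\#\TT_\ell)^s\Eta_\ell^{k,j}$ by stability~\eqref{axiom:stability}; filling the gaps between consecutive values of $\#\TT_\ell$ with the monotonicity of $N\mapsto\min_{\T_N}\eta_{\rm opt}$ and the child bound $\#\TT_{\ell+1}\le\Cchild\#\TT_\ell$, and recalling $\#\TT_\ell\le\sum_{|\ell',k',j'|\le|\ell,k,j|}\#\TT_{\ell'}$, yields $\copt\,\norm{u^\exact}_{\A_s}\le\sup_{(\ell,k,j)\in\QQ}\bigl(\sum_{|\ell',k',j'|\le|\ell,k,j|}\#\TT_{\ell'}\bigr)^s\Eta_\ell^{k,j}$, which completes the proof.
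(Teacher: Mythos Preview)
Your proposal follows essentially the same route as the paper: estimator equivalence via Proposition~\ref{prop:contrlin} and the stopping criterion~\eqref{eq:st_crit_pic} (which the paper packages as Lemma~\ref{lem:estimator_equivalence}), a comparison argument transferring D\"orfler marking from $u_\ell^\star$ to $u_\ell^{\kk,\jj}$, mesh-closure combined with full R-linear convergence to sum $\#\MM_{\ell'}$, and finally Corollary~\ref{corollary:rates:complexity} to pass from degrees of freedom to cost. The paper outsources the overlay/discrete-reliability step to \cite[Lemma~14]{ghps2021} whereas you sketch it by hand; this is a presentational difference only.

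Two points deserve correction. First, your disposal of the case $\elll<\infty$ is flawed: if $\kk[\elll]=\infty$ then $\#\QQ=\infty$ and the total cost $\sum_{(\ell',k',j')\in\QQ}\#\TT_{\ell'}$ is \emph{not} finite, so ``readily checked'' does not deliver the bound by $\Copt\max\{\|u^\star\|_{\A_s},\Eta_0^{0,0}\}$. The paper simply does not split this case off for the upper bound; its Steps~3--4 apply uniformly to every $(\ell,k,j)\in\QQ$, since the comparison estimate $\#\MM_{\ell'}\lesssim\|u^\star\|_{\A_s}^{1/s}(\Eta_{\ell'+1}^{0,0})^{-1/s}$ only needs $(\ell'+1,0,0)\in\QQ$, i.e., $\ell'<\elll$. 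The case distinction is used only for the \emph{lower} bound. Second, your displayed estimator equivalence has the factors $1\pm(\lambda_{\rm lin}/\lambda_{\rm lin}^\star)^{1/2}$ attached to $\eta_\ell(u_\ell^\star)$; the bound $\enorm{u_\ell^\star-u_\ell^{\kk,\jj}}\le C_{\textup{solve}}\lambda_{\rm lin}^{1/2}\eta_\ell(u_\ell^{\kk,\jj})$ that you (correctly) derive gives them on the side of $\eta_\ell(u_\ell^{\kk,\jj})$, as in the paper's~\eqref{eq:estimator_equivalence}, and it is this orientation that produces exactly the $\thetamark$ of~\eqref{eq:thetamark}.
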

The proof of Theorem~\ref{th:optimal_complexity} 
employs the following result whose proof follows the Steps~3--4 
in~\cite[Proof of Theorem~7]{ghps2021} with
\(u_\ell^{\kk,\jj}\) replacing \(u_\ell^{\kk}\), we refer to the 
Appendix~\ref{appendix} for details.
\begin{lemma}[estimator equivalence]
	\label{lem:estimator_equivalence}
	Suppose the assumptions of Theorem~\ref{th:optimal_complexity} hold.
	Recall \(\lambda_{\rm lin}^\star\) from
	Theorem~\ref{th:optimal_complexity}. Then, for all
	\(0 < \theta \le 1\), all
	\(
		0
		<
		\lambda_{\rm lin}
		<
		\lambda_{\rm lin}^\star
	\),
	and \(\thetamark\) defined in~\eqref{eq:thetamark}, there holds the
	equivalence
	\begin{equation}\label{eq:estimator_equivalence}
		\bigl[
			1-(\lambda_{\rm lin}/\lambda_{\rm lin}^\star)^{1/2}
		\bigr] \,
		\eta_{\ell}(u_\ell^{\kk, \jj})
		\le
		\eta_{\ell}(u_\ell^\star)
		\le
		\bigl[
			1 + (\lambda_{\rm lin}/\lambda_{\rm lin}^\star)^{1/2}
		\bigr] \,
		\eta_{\ell}(u_\ell^{\kk, \jj}).
	\end{equation}
	In particular, we have that Dörfler marking for
	(\(u_\ell^\star\), \(\thetamark\)) implies Dörfler marking for
	(\(u_\ell^{\kk, \jj}\), \(\theta\)), i.e., for any
	\(\RRR_{\ell} \subseteq \TT_{\ell}\), there holds the following implication:
	\begin{equation}\label{eq:equivalence_Doerfler}
		\thetamark \, \eta_{\ell}(u_\ell^\star)^2
		\le
		\eta_{\ell}(\RRR_{\ell}; u_\ell^\star)^2
		\quad
		\Longrightarrow
		\quad
		\theta \, \eta_{\ell}(u_\ell^{\kk, \jj})^2
		\le
		\eta_{\ell}(\RRR_{\ell}; u_\ell^{\kk, \jj})^2.
	\end{equation}
\end{lemma}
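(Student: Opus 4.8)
The plan is to convert the linearization stopping criterion~\eqref{eq:st_crit_pic} into a quantitative bound of the total error $\enorm{u_\ell^\star - u_\ell^{\kk,\jj}}$ by the computable estimator $\eta_\ell(u_\ell^{\kk,\jj})$, then feed this into stability~\eqref{axiom:stability} to obtain~\eqref{eq:estimator_equivalence}, and finally derive the Dörfler transfer~\eqref{eq:equivalence_Doerfler} by a short algebraic manipulation tailored to the definition of $\thetamark$. \textbf{Step~1.} For $(\ell,\kk,\jj) \in \QQ$ with $\kk \ge 1$ the linearization loop has terminated, so~\eqref{eq:st_crit_pic} gives $\dist(u_\ell^{\kk,\jj}, u_\ell^{\kk-1,\jj}) \le \llin\,\eta_\ell(u_\ell^{\kk,\jj})^2$. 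Combining the first estimate of~\eqref{prop:linearization:eq2+} from Proposition~\ref{prop:contrlin} (with $k=\kk$) with the lower energy bound in~\eqref{eq:energy},
\[
	\frac{\alpha}{2}\,\enorm{u_\ell^\star - u_\ell^{\kk,\jj}}^2
	\le \dist(u_\ell^\star, u_\ell^{\kk,\jj})
	\le \frac{\qnrg}{1-\qnrg}\,\dist(u_\ell^{\kk,\jj}, u_\ell^{\kk-1,\jj})
	\le \frac{\qnrg}{1-\qnrg}\,\llin\,\eta_\ell(u_\ell^{\kk,\jj})^2 ,
\]
whence $\enorm{u_\ell^\star - u_\ell^{\kk,\jj}} \le C_{\textup{solve}}\,\llin^{1/2}\,\eta_\ell(u_\ell^{\kk,\jj})$ with $C_{\textup{solve}}$ from~\eqref{eq:lambdalin}.

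\textbf{Step~2.} Applying stability~\eqref{axiom:stability} with $\TT_\fine = \TT_\coarse = \TT_\ell$ to $v_\fine = u_\ell^\star$, $v_\coarse = u_\ell^{\kk,\jj}$ and inserting Step~1 gives $|\eta_\ell(u_\ell^\star) - \eta_\ell(u_\ell^{\kk,\jj})| \le \Cstab\,C_{\textup{solve}}\,\llin^{1/2}\,\eta_\ell(u_\ell^{\kk,\jj})$. By the definition $\lambda_{\rm lin}^\star = \min\{1,\Cstab^{-2}C_{\textup{solve}}^{-2}\}$ one has $\Cstab\,C_{\textup{solve}} \le (\lambda_{\rm lin}^\star)^{-1/2}$, so the right-hand side is bounded by $(\llin/\lambda_{\rm lin}^\star)^{1/2}\,\eta_\ell(u_\ell^{\kk,\jj})$; rearranging the resulting two-sided inequality proves~\eqref{eq:estimator_equivalence}.

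\textbf{Step~3.} Fix $\RRR_\ell \subseteq \TT_\ell$ and apply stability~\eqref{axiom:stability} localized to the subset $\RRR_\ell$, together with Step~1, to obtain $\eta_\ell(\RRR_\ell; u_\ell^{\kk,\jj}) \ge \eta_\ell(\RRR_\ell; u_\ell^\star) - (\llin/\lambda_{\rm lin}^\star)^{1/2}\,\eta_\ell(u_\ell^{\kk,\jj})$. Writing $x \coloneqq (\llin/\lambda_{\rm lin}^\star)^{1/2} \in (0,1)$, the Dörfler hypothesis $\thetamark\,\eta_\ell(u_\ell^\star)^2 \le \eta_\ell(\RRR_\ell; u_\ell^\star)^2$ and the left inequality of~\eqref{eq:estimator_equivalence} give $\eta_\ell(\RRR_\ell; u_\ell^{\kk,\jj}) \ge \bigl[\thetamark^{1/2}(1-x) - x\bigr]\,\eta_\ell(u_\ell^{\kk,\jj})$; by the choice $\thetamark = (\theta^{1/2}+x)^2/(1-x)^2$ from~\eqref{eq:thetamark} one checks $\thetamark^{1/2}(1-x) - x = \theta^{1/2} \ge 0$, and squaring yields $\theta\,\eta_\ell(u_\ell^{\kk,\jj})^2 \le \eta_\ell(\RRR_\ell; u_\ell^{\kk,\jj})^2$, i.e.~\eqref{eq:equivalence_Doerfler}. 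The calculation is essentially routine; the only delicate points are using stability~\eqref{axiom:stability} in its subset-localized form (the standard form of (A1), here with equal meshes) and the bookkeeping in Step~3, where the identity $\thetamark^{1/2}(1-x) = \theta^{1/2}+x$ is exactly what makes $\thetamark$ the right inflated marking threshold so that Dörfler marking for $u_\ell^\star$ survives the perturbation to $u_\ell^{\kk,\jj}$.
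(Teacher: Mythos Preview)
Your proof is correct and follows essentially the same route as the paper: both derive $\enorm{u_\ell^\star - u_\ell^{\kk,\jj}} \le C_{\textup{solve}}\,\llin^{1/2}\,\eta_\ell(u_\ell^{\kk,\jj})$ from~\eqref{eq:energy},~\eqref{prop:linearization:eq2+}, and~\eqref{eq:st_crit_pic}, feed this into the subset-localized stability~\eqref{axiom:stability} to obtain~\eqref{eq:estimator_equivalence}, and then verify the Dörfler transfer via the identity $\thetamark^{1/2}(1-x) = \theta^{1/2}+x$. The only cosmetic difference is that the paper applies stability to a generic subset $\UU_\ell$ from the outset and then specializes, whereas you split into the full-mesh case (Step~2) and the marked-set case (Step~3); the content is identical.
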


\section{Numerical experiments}\label{sec:numerics}

In the following experiments, we highlight the
parameter-free Kačanov linearization (labelled K in the plots) combined with an 
optimal $hp$-robust multigrid (MG) method from~\cite{imps2022}. For comparison, 
we present results for the standard Zarantonello iteration (Z), 
and a Newton method (N) with different optimal algebraic solvers. 
Experiments are conducted in the open source Matlab package
MooAFEM~\cite{MooAFEM}.

\subsection{Computational examples}
We consider the following test cases.

\medskip
\subsubsection{L-shaped domain.}\label{section:HPW_benchmark}
From~{\cite[Section~5.3]{hpw2021}}, we consider the L-shaped domain
\(\Omega =
(-1, 1)^2 \setminus [0, 1] \times [-1, 0]\), the nonlinear diffusion
coefficient \(\mu(t) = 1 + \exp(-t)\), and right-hand side \(f\) such that the
exact solution, given in polar coordinates, reads
\begin{equation*}
	u^\star(r, \varphi)
	\coloneqq
	r^{2/3} \, \cos(2 \varphi / 3) (1-r \cos(\varphi)) (1 + r \cos(\varphi))
	(1- r \sin(\varphi)) (1  + r \sin(\varphi)) \cos(\varphi).
\end{equation*}
Then, the growth assumption~\eqref{eq:assumption-mu} is satisfied with
\(\alpha = 1 - 2 \exp(-3/2)\) and \(L = 6\). This leads to $\delta = 1/L = 1/6$
as the optimal damping (with respect to energy) parameter in the case of the
Zarantonello iteration employed in the numerical test. The obtained adaptive
mesh for this problem is presented in Figure~\ref{fig:mesh} (left).

\medskip
\subsubsection{Z-shaped domain.}\label{section:GHPS_benchmark}
From~{\cite[Section~6.2]{ghps2021}, we consider the Z-shaped domain
$\Omega = (-1, 1)^2 \setminus \mathrm{conv}\{(-1, 0), (0, 0), (-1, -1)\}$,
the nonlinear diffusion coefficient $\mu(t) = 1 + \log(1+t)/(1+t)$, and
right-hand side $f=1$ and $\boldsymbol{f} =0$. Then, the growth
assumption~\eqref{eq:assumption-mu} is satisfied with
$\alpha \approx 0.9582898017$ and $L \approx 1.542343818$. Thus, we use
$\delta = 1/L \approx 0.648364$ as the optimal damping parameter in the case of
the	Zarantonello iteration. The obtained adaptive mesh for this problem is
presented in Figure~\ref{fig:mesh} (right).
\begin{figure}
	\resizebox{\textwidth}{!}{
		\subfloat{
		\includegraphics{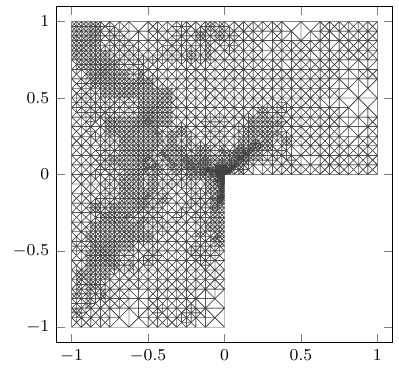}
		}
		\hfil
		\subfloat{
		\includegraphics{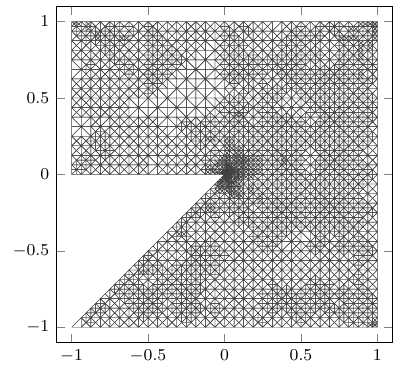}
		}
	}
	\caption{\label{fig:mesh} Adaptive mesh for the L-shaped
	domain problem from Section~\ref{section:HPW_benchmark} with 
	\(\# \TT_{7} = 6484\) (left) and for the Z-shaped
	domain problem from Section~\ref{section:GHPS_benchmark} with 
	\(\# \TT_{6} = 6122\) (right) for adaptivity
	parameters \(\theta = 0.5\) and \(\lambda_{\rm lin} = 0.7\).}
\end{figure}
\subsection{Optimality of Algorithm~\ref{algorithm}}
From Theorem~\ref{th:optimal_complexity}, we know that optimal complexity is
guaranteed for sufficiently small adaptivity parameters $\lambda_{\rm lin}$ for
the linearization stopping criterion~(\hyperref[eq:st_lin]{Alg4}) and $\theta$ for
marking~\eqref{eq:doerfler}, while the choice of \(\Cmark \ge 1\) and
\(0 < \rho < 1\), \(\alpha_{\min} > 0\), \(J_{\max} \in \N\), and
\(u_0^{0, 0} \coloneqq 0\) is arbitrary. Throughout the upcoming numerical
tests, we use \(\Cmark =1\) and $\rho = 1/2$ in Algorithm~\ref{algorithm}(i.d),
\(\alpha_{\min} = 100\) and \(J_{\max} = 1\) in~(\hyperref[eq:st_alg]{Alg3}), and
\(u_0^{0, 0} \coloneqq 0\). Table~\ref{tab:cost} presents a thorough numerical
study of the weighted cumulative computational time for different choices of
adaptivity parameters. It is interesting to note that the performance of the
algorithm is, in practice, not restricted to small parameters. On the contrary,
a favorable combination of parameters seems to be $\lambda_{\rm lin} = 0.9$ and
$\theta = 0.5$ which is used henceforth in the remaining numerical experiments.
Optimality of the algorithm is showcased both with respect to the degrees of
freedom and with respect to the cumulative computation time in
Figure~\ref{fig:optimality_HPW} for the L-shaped problem, and
Figure~\ref{fig:optimality_GHPS} for the Z-shaped problem, respectively. In
particular, we highlight the performance of the parameter-free Ka{\v c}anov
linearization, although the experiments employ the optimal damping
parameter for the Zarantonello iteration, and a damped 
Newton iteration. Moreover, we emphasize that optimal complexity 
is also observed heuristically for higher-order discretizations, see 
Figure~\ref{fig:HO}, though this is not 
yet covered; see Proposition~\ref{prop:axioms} and the comments from 
Section~\ref{sec:comments}.
\begin{table}
	\resizebox{\textwidth}{!}{
		\includegraphics{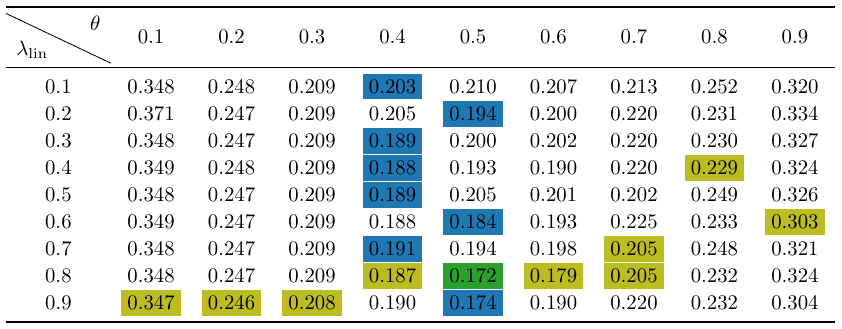}
	}
	\caption{\label{tab:cost} Optimal selection of parameters 
	for the Ka{\v c}anov linearization 
	and L-shaped domain problem considering the
	estimator-weighted cumulative time
	[$\eta_\ell(u_\ell^{\kk, \jj})
			\bigl(
			\sum_{
				|\ell',k',j'| < |\ell,\kk,\jj|
				}
			\mathtt{time}(\ell')
			\bigr)^{1/2}$
	]
	and
	stopping criterion $\eta_\ell(u_\ell^{\kk, \jj}) \ < 10^{-2}$
	for various $\theta$ and
	$\lambda_{\rm lin}$. Blue: the rowwise best combination; 
	yellow: the
	columnwise best combination; green: if both coincide.}  
\end{table}
\begin{figure}
		\resizebox{\textwidth}{!}{
		\subfloat{\includegraphics[valign=t]{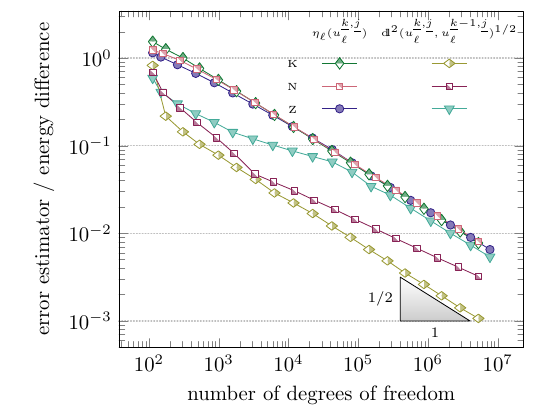}}
		\hfil
		\subfloat{\includegraphics[valign=t]{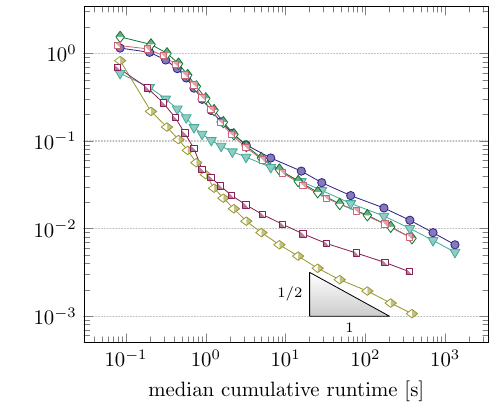}}
		}
	\caption{\label{fig:optimality_HPW} 
	Convergence of the
	error estimator \(\eta_\ell(u_\ell^{\kk, \jj})\) and
	energy difference~\({\rm d\!l}^2(u_\ell^{\underline{k}, \underline{j}},
	u_\ell^{\underline{k}-1, \underline{j}})^{1/2}\) for the L-shaped domain 
	problem from Section~\ref{section:HPW_benchmark}
	with parameters \(\theta = 0.5\) and \(\lambda_{\rm lin} = 0.7\) as well as 
	\(\delta = 1/6\) (Zarantonello iteration) resp. \(\delta = 0.5\) 
	(Newton iteration).}
\end{figure}
\begin{figure}
	\resizebox{\textwidth}{!}{
		\subfloat{
			\includegraphics[valign=t]{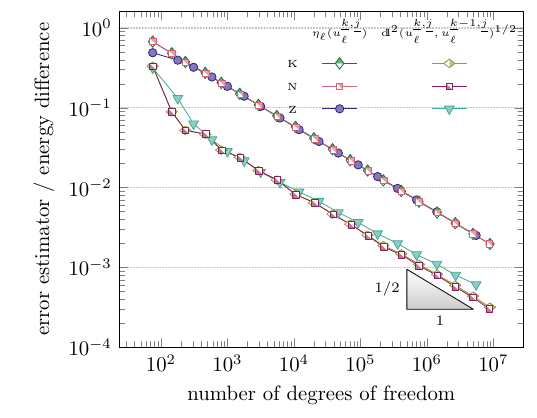}
		}
		\hfil
		\subfloat{
			\includegraphics[valign=t]{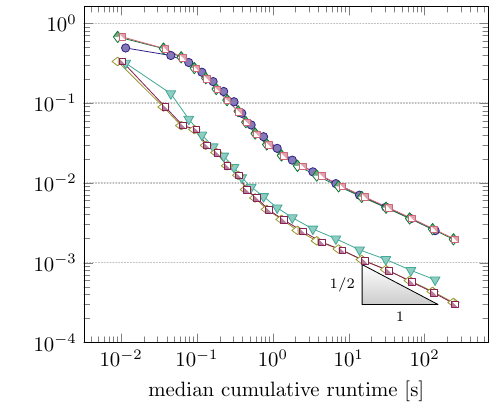}
		}
	}
	\caption{\label{fig:optimality_GHPS} 
	Convergence of the
	error estimator \(\eta_\ell(u_\ell^{\kk, \jj})\) and energy difference
	\({\rm d\!l}^2(u_\ell^{\underline{k}, \underline{j}},
	u_\ell^{\underline{k}-1, \underline{j}})^{1/2}\) for the Z-shaped domain 
	problem from Section~\ref{section:GHPS_benchmark}
	with parameters \(\theta = 0.5\) and \(\lambda_{\rm lin} = 0.7\) as well as 
	\(\delta \approx 0.648364\) (Zarantonello iteration) resp. \(\delta = 0.5\) 
	(Newton iteration).}
\end{figure}

\begin{figure}
	\resizebox{\textwidth}{!}{
		\subfloat{
			\includegraphics[valign=t]{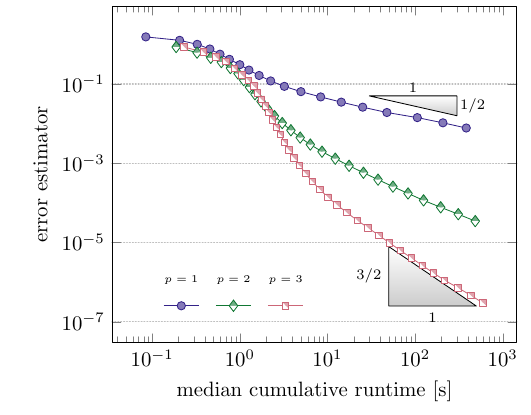}
		}
		\hfil
		\subfloat{
			\includegraphics[valign=t]{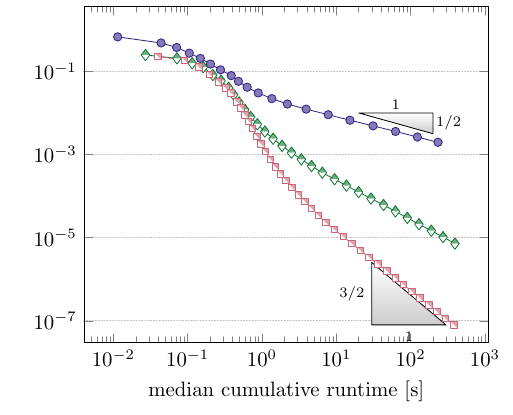}
		}
	}
	\caption{\label{fig:HO} Convergence 
	of the estimator \(\eta_\ell(u_\ell^{\kk, \jj})\) and energy difference
			\({\rm d\!l}^2(u_\ell^{\underline{k}, \underline{j}},
			u_\ell^{\underline{k}-1, \underline{j}})^{1/2}\) using the Kačanov 
	linearization for the L-shaped domain problem from 
	Section~\ref{section:HPW_benchmark} (left) and Z-shaped domain problem 
	from Section~\ref{section:GHPS_benchmark} (right) for several polynomial 
	degrees \(p=1,2,3\).}
\end{figure}

\begin{figure}
	\resizebox{\textwidth}{!}{
		\subfloat{
			\includegraphics{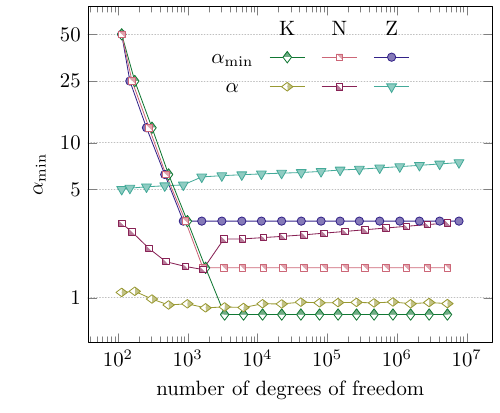}
		}
		\hfil
		\subfloat{
			\includegraphics{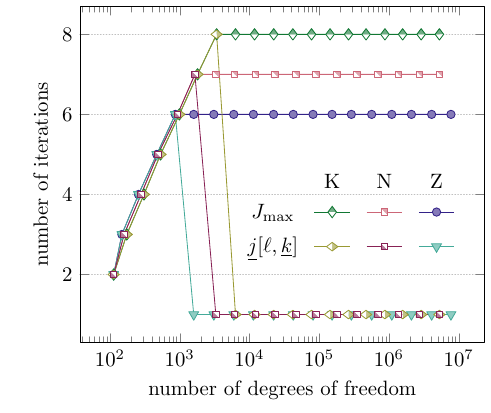}
		}
	}
	\caption{\label{fig:iterations} 
	History plot of $\alpha_\ell^{\kk, \jj}$ and
		$\alpha_{\min}$ (left), and iteration counters
		$\underline{j}[\ell, \kk]$ and $J_{\max}$ (right) for the L-shaped 
		domain problem from Section~\ref{section:HPW_benchmark} with parameters 
		$\theta = 0.5$ and $\lambda_{\rm lin} = 0.7$.}
\end{figure}

\subsection{Performance of the novel stopping criterion~{\bf 
(\hyperref[eq:st_alg]{Alg3})}}
We now investigate numerically the performance of the novel parameter-free
stopping criterion for the algebraic solver. In Figure~\ref{fig:iterations}
(left), the running parameter $\alpha_\ell^{\kk,\jj}$ is displayed together
with the evolution of the lower bound $\alpha_{\rm min}$. Though these values
are typically small for the Ka{\v c}anov iteration, one can see in
Figure~\ref{fig:iterations} (right) that the number of algebraic solver steps
$\jj[\ell,\kk]$ is rather not affected. Indeed, after the
pre-asymptotic regime, only one step is needed for both linearization methods.
In particular, the bounds $J_{\max}$ on the number of iterations of the
algebraic solver are commensurate.
%

\subsection{Alternative choices of iteration schemes}
In this subsection, we investigate the performance of the adaptive algorithm for 
a Newton method for several damping parameters and an alternative selection of 
an algebraic solver.
To this end, we consider the Z-shaped domain problem from 
Section~\ref{section:GHPS_benchmark} and a Newton method combined with
an optimally preconditioned conjugate gradient method (PCG) in the spirit 
of~\cite{cnx2012,hpsv2021}. For this \emph{particular} problem, the 
theoretical bounds from Section~\ref{sec:damped_newton} also allow for an 
undamped Newton method. The numerical experiments confirm the optimal complexity 
for both considered uniformly contractive algebraic solvers, see 
Figure~\ref{fig:Newton-alg}. Moreover, we observe that the \emph{total} 
number of iterations
is not significantly affected by the choice of
the damping parameter, see
Figure~\ref{fig:Newton-damping}: a larger damping parameter \(\delta\) 
leads to only one Newton step
at the expense of a higher number of algebraic solver iterations.  

}

\begin{figure}
	\includegraphics{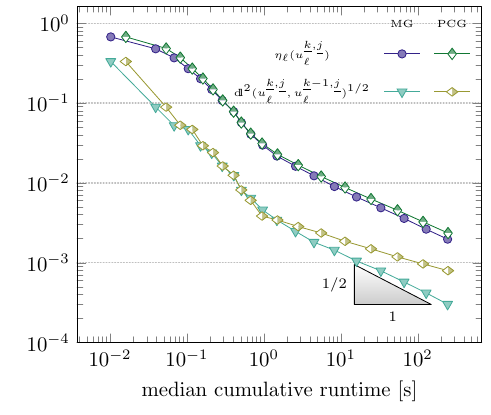}
	\caption{\label{fig:Newton-alg}
	Optimal complexity
	of an undamped Newton method with an optimally preconditioned conjugate 
	gradient method in the spirit of~\cite{cnx2012, hpsv2021} and MG solver from
	\cite{imps2022} for the Z-shaped domain problem from 
	Section~\ref{section:GHPS_benchmark} with parameters $\theta = 0.5$ and 
	$\lambda_{\rm lin} = 0.7$.}
\end{figure}

\begin{figure}
	\resizebox{\textwidth}{!}{
		\subfloat{
			\includegraphics{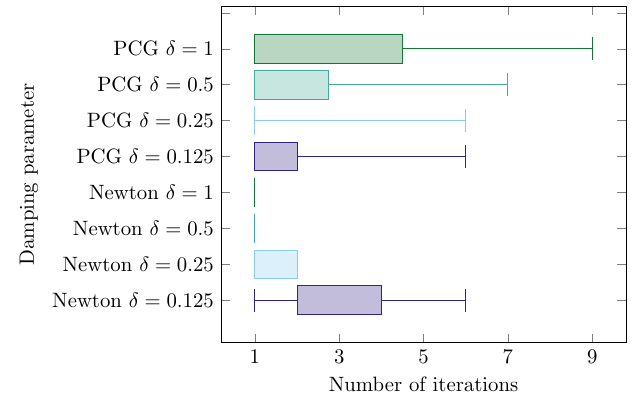} 
		}
		\hfil
		\subfloat{
			\includegraphics{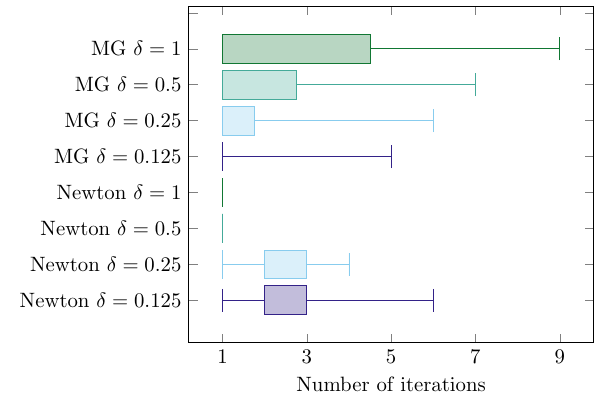} 
		}
	}
	\caption{\label{fig:Newton-damping} Maximum, median, and 
	minimum number of iterations for several damping parameters 
	\(\delta \in  \{1/8, 1/4, 1/2, 1 \}\) in the Newton 
	method with an optimally preconditioned conjugate gradient method (left) and 
	MG solver (right) for the Z-shaped domain problem from 
	Section~\ref{section:GHPS_benchmark} with parameters \(\theta = 0.5\) and 
	\(\lambda_{\rm lin} = 0.7\).}
\end{figure}

\printbibliography

\appendix
\section{Proofs of Lemma~\ref{lemma:energy-identity}, 
Proposition~\ref{prop:aposteriori-jk},
Theorem~\ref{th:optimal_complexity}, and
Lemma~\ref{lem:estimator_equivalence}}
\label{appendix}

\begin{proof}[\textbf{Proof of Lemma~\ref{lemma:energy-identity}}]
	Let $d_\ell \coloneqq v_\ell-w_\ell$ and
	$\psi(t) \coloneqq \EE(w_\ell + t d_\ell)$. Arguing as in~\cite{hw2020},
	one sees that $\psi$ is differentiable with
	\begin{align*}
		&\dist(v_\ell, w_\ell)
		=
		\EE(w_\ell) - \EE(v_\ell)
		=
		\psi(0) - \psi(1)
		=
		- \int_0^1 \psi'(t) \d{t}
		\\&
		\quad
		= -
		\int_0^1
		\big[
			\dual{\AA(w_\ell + t d_\ell) }{d_\ell}_{\XX' \times \XX}
			- F(d_\ell)
		\big]
		\d{t}
		\\& \quad
		= -
		\Bigl(
			\int_0^1
			\dual{
				\AA(w_\ell + t d_\ell) - \AA u_\ell^{k-1,\jj}
				}{
				d_\ell
				}_{\XX' \times \XX}\d{t}
			-  F(d_\ell)
			+ \dual{\AA u_\ell^{k-1,\jj}}{d_\ell}_{\XX' \times \XX}
		\Bigr).
	\end{align*}
	Note that $u_\ell^{k,\star}$ is the exact iterate of the
	linearization~\eqref{eq:linearized_pb} and the latter yields that
	\begin{equation*}
		F(d_\ell) - \dual{ \AA u_\ell^{k-1,\jj}}{d_\ell}_{\XX' \times \XX}
		\eqreff{eq:linearized_pb}
		=
		a(u_\ell^{k-1,\jj}; u_\ell^{k,\star} - u_\ell^{k-1,\jj}, d_\ell).
	\end{equation*}
	To see~\eqref{eq2:lemma:energy-identity}, note that
	\begin{align*}
		\dual{
			\AA(w_\ell + t d_\ell) - \AA u_\ell^{k-1,\jj}
			}{
			d_\ell
			}_{\XX' \times \XX}
		&\le
		L \,
		\big[
			\enorm{w_\ell - u_\ell^{k-1,\jj}} + t \, \enorm{d_\ell}
		\big] \,
		\enorm{d_\ell}
		\\
		&=
		L \, \enorm{w_\ell - u_\ell^{k-1,\jj}} \,
		\enorm{d_\ell} + t L \, \enorm{d_\ell}^2.
	\end{align*}
	With $\int_0^1 t \d{t} = 1/2$, this concludes the proof
	of~\eqref{eq2:lemma:energy-identity}. For
	$w_\ell = u_\ell^{k-1,\jj}$, the last estimate simplifies to
	\begin{equation*}
		\dual{
				\AA(u_\ell^{k-1,\jj} + t d_\ell) - \AA u_\ell^{k-1,\jj}
			}{
				d_\ell
			}_{\XX' \times \XX}
		\le
		t L \, \enorm{d_\ell}^2
	\end{equation*}
	and analogously
	\begin{equation*}
		\dual{
				\AA(u_\ell^{k-1,\jj} + t d_\ell) - \AA u_\ell^{k-1,\jj}
			}{
				d_\ell
			}_{\XX' \times \XX}
		\ge
		t \alpha \, \enorm{d_\ell}^2.
	\end{equation*}
	Together with $\int_0^1 t \d{t} = 1/2$, this concludes the proof
	of~\eqref{eq2:lemma:energy-identity}--\eqref{eq3:lemma:energy-identity}.
\end{proof}

\begin{proof}[\textbf{Proof of Proposition~\ref{prop:aposteriori-jk}}]
	Reliability~\eqref{axiom:reliability} and stability~\eqref{axiom:stability}
	prove that
	\begin{align*}
		\enorm{u^\exact - u_\ell^{k,j}}
		\le
		\enorm{u^\exact - u_\ell^\exact} + \enorm{u_\ell^\exact - u_\ell^{k,j}}
		&\eqreff{axiom:reliability}
		\le
		\Crel \, \eta_\ell(u_\ell^\exact)
		+ \enorm{u_\ell^\exact - u_\ell^{k,j}}
		\\&
		\eqreff{axiom:stability}
		\le
		\Crel \, \eta_\ell(u_\ell^{k,j})
		+ (1 + \Crel \, \Cstab) \, \enorm{u_\ell^\exact - u_\ell^{k,j}}.
	\end{align*}
	The \textsl{a~posteriori} estimate~\eqref{eq2:alg:sol:contraction} bounds
	the final term on the right-hand side
	\begin{equation*}
		\enorm{u_\ell^\exact - u_\ell^{k,j}}
		\le
		\enorm{u_\ell^\exact - u_\ell^{k,\exact}}
		+ \enorm{u_\ell^{k,\exact} - u_\ell^{k,j}}
		\eqreff{eq2:alg:sol:contraction}
		\le
		\enorm{u_\ell^\exact - u_\ell^{k,\exact}}
		+ \frac{\qalg}{1-\qalg} \, \enorm{u_\ell^{k,j} - u_\ell^{k,j-1}}.
	\end{equation*}
	Overall, it only remains to bound
	$\enorm{u_\ell^\exact - u_\ell^{k,\exact}}$. From the unperturbed energy
	contraction~\eqref{eqq:energy-contraction} and
	using~\eqref{eq3:lemma:energy-identity} for $v_\ell = u_\ell^{k,\exact}$,
	it follows that
	\begin{align*}
		\frac{\alpha}{2} \, \enorm{u_\ell^\exact - u_\ell^{k,\exact}}^2
		&\eqreff{eq:energy}
		\le
		\dist(u_\ell^\exact, u_\ell^{k,\exact})
		\eqreff*{eqq:energy-contraction}
		\le
		\frac{\qnrgs}{1 - \qnrgs} \dist(u_\ell^{k,\exact} u_\ell^{k-1,\jj})
		\\&
		\eqreff*{eq3:lemma:energy-identity}
		\le
		\frac{\qnrgs}{1 - \qnrgs}
		\Bigl(
			- \frac{\alpha}{2} \, \enorm{u_\ell^{k,\exact}-u_\ell^{k-1,\jj}}^2
			+ a(
				u_\ell^{k-1,\jj}; u_\ell^{k,\star} - u_\ell^{k-1,\jj},
				u_\ell^{k,\exact}-u_\ell^{k-1,\jj}
				)
		\Bigr)
		\\&
		\eqreff*{eq:linearization:bounds}
		\le
		\frac{\qnrgs}{1 - \qnrgs}
		\Bigl( \Ccnt - \frac{\alpha}{2} \Bigr) \,
		\enorm{u_\ell^{k,\exact}-u_\ell^{k-1,\jj}}^2.
	\end{align*}
	Contraction of the algebraic solver~\eqref{eq:alg:sol:contraction} and
	nested iteration $u_\ell^{k,0} = u_\ell^{k-1,\jj}$ show that
	\begin{align*}
		\enorm{u_\ell^{k,\exact}-u_\ell^{k-1,\jj}}
		&\le
		\enorm{u_\ell^{k,\exact}-u_\ell^{k,j}}
		+ \enorm{u_\ell^{k,j}-u_\ell^{k-1,\jj}}
		\\&
		\eqreff*{eq:alg:sol:contraction}\le
		\qalg^j \, \enorm{u_\ell^{k,\exact}-u_\ell^{k-1,\jj}}
			+ \enorm{u_\ell^{k,j}-u_\ell^{k-1,\jj}}.
	\end{align*}
	Finally, we are thus led to
	\begin{equation*}
		\enorm{u_\ell^\exact - u_\ell^{k,\exact}}^2
		\le
		\frac{2}{\alpha} \frac{\qnrgs}{1-\qnrgs}
		\Big(\Ccnt - \frac{\alpha}{2} \Big)
		\frac{1}{(1-\qalg)^2} \, \enorm{u_\ell^{k,j}-u_\ell^{k-1,\jj}}^2.
	\end{equation*}
	This concludes the proof.
\end{proof}

\begin{proof}[\textbf{Proof of Theorem~\ref{th:optimal_complexity}}]
	Thanks to Corollary~\ref{corollary:rates:complexity}, it suffices to show
	\begin{equation}\label{eq:proof:optimality}
		\| u \|_{\A_s}
		\lesssim
		\sup_{(\ell, k, j) \in \QQ} (\# \TT_{\ell})^s \, \Eta_{\ell}^{k, j}
		\lesssim
		\max \{\| u \|_{\A_s}, \Eta_{0}^{0, 0}\}.
	\end{equation}

	\medskip
	\textbf{Step~1.} We first show the lower bound
	in~\eqref{eq:proof:optimality} for \(\elll = \infty\). In this case,
	Algorithm~\ref{algorithm} ensures that \(\# \TT_{\ell} \to \infty\) as
	\(\ell \to \infty\). For any \(N \in \N\), we follow the proof of
	\cite[Proposition~4.15]{axioms} and choose the maximal index
	\(\ell' \in \N_0\) with \(\# \TT_{\ell'} - \# \TT_{0} \le N\). Since NVB
	refinement gives
	\(\# \TT_{\ell'+1} \lesssim \# \TT_{\ell'}\), we are led to
	\begin{equation*}
		N + 1 < \# \TT_{\ell'+1} - \# \TT_{0} + 1
		\le
		\# \TT_{\ell'+1}
		\lesssim
		\# \TT_{\ell'}.
	\end{equation*}
	Then, using \(\TT_{\ell'} \in \T\) we
	show for all \((\ell', k', j') \in \QQ\) that
	\begin{equation*}
		\min_{\TT_{\rm opt} \in \T_N} \eta_{\rm opt}(u_{\rm opt}^\star)
		\le
		\eta_{\ell'}(u_{\ell'}^\star)
		\le  \Eta_{\ell'}^{k', j'}.
	\end{equation*}
	A combination of the two previous estimates verifies
	\begin{equation*}
		(N + 1)^s \,
		\min_{\TT_{\rm opt} \in \T_N} \eta_{\rm opt}(u_{\rm opt}^\star)
		\le
		(\# \TT_{\ell'})^s \, \Eta_{\ell'}^{k', j'}
		\le
		\sup_{(\ell, k, j) \in \QQ} (\# \TT_{\ell})^s \, \Eta_{\ell}^{k, j}.
	\end{equation*}
	Taking the supremum over all \(N \in \N\), we prove the lower
	bound in~\eqref{eq:proof:optimality} for \(\elll = \infty\).

	\medskip
	\textbf{Step~2.} We prove the lower bound in
	\eqref{eq:proof:optimality} for \(\elll < \infty\).
	Corollary~\ref{corollary_0tol} ensures \(\eta_\elll(u_\elll^\star) = 0\) and
	\(u_\elll^\star = u^\star\). Since \(\| u^\star \|_{\A_s} = 0\)
	for \(\elll = 0\), we may assume \(\elll > 0\). Following Step~1, we let
	\(0 \le N < \# \TT_{\elll} - \# \TT_{0} \) and choose the maximal index
	\(0 \le \ell' < \elll\) with \(\# \TT_{\ell'} - \# \TT_{0} \le N\). With
	the estimates from Step~1, we arrive at
	\begin{equation*}
		\| u^\star \|_{\A_s}
		=
		\sup_{0 \le N < \# \TT_{\elll} - \# \TT_{0}}
		\bigl(
			(N+1)^s
			\min_{\TT_{\rm opt} \in \T_N} \eta_{\rm opt}(u_{\mathrm{opt}}^\star)
		\bigr)
		\le
		\sup_{(\ell, k, j) \in \QQ} (\# \TT_{\ell})^s \, \Eta_{\ell}^{k, j}.
	\end{equation*}
	This proves the lower bound
	in~\eqref{eq:proof:optimality} also for \(\elll < \infty\).

	\medskip
	\textbf{Step~3.} We proceed to prove the upper bound
	in~\eqref{eq:proof:optimality}. To this end, we may suppose
	\(\| u^\star \|_{\A_s} < \infty\) since the result is trivial otherwise.
	Let $(\ell'+1, 0, 0) \in \QQ$. Then, \cite[Lemma~14]{ghps2021} guarantees
	the existence of a set \(\RRR_{\ell'} \subseteq \TT_{\ell'}\) such that
	\begin{equation}\label{eq:Rell}
		\# \RRR_{\ell'}
		\lesssim
		\| u^\star \|_{\A_s}^{1/s} \, \eta_{\ell'}(u_{\ell'}^\star)^{-1/s}
		\quad
		\text{and}
		\quad
		\thetamark \, \eta_{\ell'}(u_{\ell'}^\star)^2
		\le
		\eta_{\ell'}(\RRR_{\ell'}; u_{\ell'}^\star)^2.
	\end{equation}
	Note that~\eqref{eq:equivalence_Doerfler} verifies that \(\RRR_{\ell'}\)
	satisfies the Dörfler marking criterion in
	Algorithm~\ref{algorithm}\textrm{(iii)} with parameter \(\theta\), i.e.,
	\(
		\theta \, \eta_{\ell'}(u_{\ell'}^{\kk, \jj})^2
		\le
		\eta_{\ell'}(\RRR_{\ell'}; u_{\ell'}^{\kk, \jj})^2
	\).
	Therefore, since \(0 < \theta < \thetamark < \theta^\star\), the Dörfler
	marking criterion in~\eqref{eq:doerfler} ensures
	\begin{equation}\label{eq:Doerfler_opt}
		\#\MM_{\ell'}
		\le
		C_{\rm mark} \, \# \RRR_{\ell'}.
	\end{equation}
	Note that the algebraic error contraction~\eqref{eq:alg:sol:contraction} 
	and nested iteration $ u_{\ell'}^{\kk, 0} =  u_{\ell'}^{\kk-1, \jj}$ imply
		\begin{align*}
		\enorm{u_{\ell'}^{\kk, \star} - u_{\ell'}^{\kk, \jj}}
		\eqreff{eq:alg:sol:contraction} 
		\le 
		\qalg^{\jj} \, \enorm{u_{\ell'}^{\kk, \star} - u_{\ell'}^{\kk-1, \jj}}
		\le 
		\qalg \, \enorm{u_{\ell'}^{\kk, \star} - u_{\ell'}^{\kk, \jj}}
		+
		\qalg \, \enorm{u_{\ell'}^{\kk, \jj} - u_{\ell'}^{\kk-1, \jj}}
		\end{align*}
		which after rearranging the terms leads to
		\begin{align}\label{eq:erroralg}
		\enorm{u_{\ell'}^{\kk, \star} - u_{\ell'}^{\kk, \jj}}
		\le 
		\frac{\qalg}{1-\qalg} \, 
		\enorm{u_{\ell'}^{\kk, \jj} - u_{\ell'}^{\kk-1, \jj}}.
		\end{align}
	Full linear convergence~\eqref{eq:Rlin:convergence}, 
	stability~\eqref{axiom:stability} and triangle inequality, 
	contraction of the exact linearization~\eqref{eq:energy-contraction} and
	the stopping criterion of the linearization~(\hyperref[eq:st_lin]{Alg4}) show
	\begin{align}\label{eq:Eta_estimator}
		\begin{split}
			\Eta_{\ell'+1}^{0, 0}
			&\eqreff*{eq:Rlin:convergence}
			\lesssim
			\Eta_{\ell'}^{\kk, \jj}
			\eqreff{axiom:stability}
			\lesssim
			\enorm{u_{\ell'}^{\star} - u_{\ell'}^{\kk, \star}}
			+ \enorm{u_{\ell'}^{\kk, \star} - u_{\ell'}^{\kk, \jj}}
			+ \eta_{\ell'}(u_{\ell'}^{\kk, \jj})
			\\
			&
			\eqreff*{eq:energy}
			\lesssim
			\dist(u_{\ell'}^\star, u_{\ell'}^{\kk, \star})^{1/2}
			+ \enorm{u_{\ell'}^{\kk, \star} - u_{\ell'}^{\kk, \jj}}
			+ \eta_{\ell'}(u_{\ell'}^{\kk, \jj})
			\\
			&
			\eqreff*{eq:energy-contraction}
			\lesssim
			\dist(u_{\ell'}^\star, u_{\ell'}^{\kk-1, \jj})^{1/2}
			+ \enorm{u_{\ell'}^{\kk, \star} - u_{\ell'}^{\kk, \jj}}
			+ \eta_{\ell'}(u_{\ell'}^{\kk, \jj})
			\\
			&\eqreff*{eq:erroralg}
			\lesssim
			\dist(u_{\ell'}^\star, u_{\ell'}^{\kk-1, \jj})^{1/2}
			+ \enorm{u_{\ell'}^{\kk, \jj} - u_{\ell'}^{\kk-1, \jj}}
			+ \eta_{\ell'}(u_{\ell'}^{\kk, \jj})\\
			&\eqreff*{eq:norm_energy_estimate}
			\lesssim
			\dist(u_{\ell'}^\star, u_{\ell'}^{\kk-1, \jj})^{1/2}
			+ \dist(u_{\ell'}^{\kk, \jj}, u_{\ell'}^{\kk-1, \jj})^{1/2}
			+ \eta_{\ell'}(u_{\ell'}^{\kk, \jj})
			\\
			&
			\eqreff*{eq:pythagoras_discrete}
			\le
			\dist(u_{\ell'}^\star, u_{\ell'}^{\kk, \jj})^{1/2}
			+ \dist(u_{\ell'}^{\kk, \jj}, u_{\ell'}^{\kk-1, \jj})^{1/2}
			+ \eta_{\ell'}(u_{\ell'}^{\kk, \jj})
			\\
			&\eqreff*{prop:linearization:eq2+}
			\lesssim
			\dist(u_{\ell'}^{\kk, \jj}, u_{\ell'}^{\kk-1, \jj})^{1/2}
			+ \eta_{\ell'}(u_{\ell'}^{\kk, \jj})
			\stackrel{\textup{(\hyperref[eq:st_lin]{Alg4})}}
			\lesssim
			\eta_{\ell'}(u_{\ell'}^{\kk, \jj}).
		\end{split}
	\end{align}
	Combining all estimates and using the estimator
	equivalence~\eqref{eq:estimator_equivalence}, we obtain
	\begin{equation}\label{eq:marked_Eta}
		\medmuskip = -1mu
		\# \MM_{\ell'} \,
		\eqreff*{eq:Doerfler_opt}
		\lesssim \,
		\# \RRR_{\ell'} \,
		\eqreff*{eq:Rell}
		\lesssim \,
		\| u^\star \|_{\A_s}^{1/s} \, \eta_{\ell'}(u_{\ell'}^\star)^{-1/s}
		\eqreff*{eq:estimator_equivalence}
		\lesssim
		\| u^\star \|_{\A_s}^{1/s} \, \eta_{\ell'}(u_{\ell'}^{\kk, \jj})^{-1/s}
		\eqreff*{eq:Eta_estimator}
		\lesssim
		\| u^\star \|_{\A_s}^{1/s} \, \bigl(\Eta_{\ell'+1}^{0, 0}\bigr)^{-1/s} .
	\end{equation}

	\medskip
	\textbf{Step 4.} Consider $(\ell, k, j) \in \QQ$, then full linear
	convergence~\eqref{eq:Rlin:convergence} yields that
	\begin{equation}\label{eq:lin_cv_sum}
		\sum_{
			\substack{
				(\ell',k',j') \in \QQ
				\\
				|\ell',k',j'| \le |\ell,k,j|
			}
		}
		(\Eta_{\ell'}^{k', j'})^{-1/s}
		\eqreff{eq:Rlin:convergence}
		\lesssim
		(\Eta_{\ell}^{k, j})^{-1/s}
		\sum_{
			\substack{
				(\ell',k',j') \in \QQ
				\\
				|\ell',k',j'| \le |\ell,k,j|
			}
		}
		(\qlin^{1/s})^{|\ell, k, j| - |\ell', k', j'|}
		\lesssim
		(\Eta_{\ell}^{k, j})^{-1/s}.
	\end{equation}
	Recall that NVB refinement satisfies the mesh-closure estimate, i.e., there
	holds that
	\begin{equation}\label{eq:meshclosure}
		\# \TT_\ell - \# \TT_0
		\le
		\Cmesh \sum_{\ell' = 0}^{\ell-1} \# \MM_{\ell'}
		\quad \text{for all }
		\ell \ge 0,
	\end{equation}
	where $\Cmesh > 1$ depends only on $\TT_{0}$.
	Thus, for $(\ell, k, j) \in \QQ$ with $\ell > 0$, we have by the above
	formulas~\eqref{eq:marked_Eta}--\eqref{eq:meshclosure} that
	\begin{align*}
		\# \TT_{\ell} - \# \TT_{0} +1
		&\le
		2 \, (\# \TT_{\ell} - \# \TT_{0})
		\eqreff*{eq:meshclosure}
		\le
		2 \, \Cmesh \sum_{\ell' = 0}^{\ell-1} \# \MM_{\ell'}
		\eqreff*{eq:marked_Eta}
		\lesssim \
		\norm{u^\exact}_{\A_s}^{1/s}
		\sum_{\ell' = 0}^{\ell-1} (\Eta_{\ell'+1}^{0, 0})^{-1/s}
		\\
		&\lesssim
		\norm{u^\exact}_{\A_s}^{1/s} \! \!
		\sum_{
			\substack{
				(\ell',k',j') \in \QQ
				\\
				|\ell',k',j'| \le |\ell,k,j|
			}
		}
		(\Eta_{\ell'}^{k', j'})^{-1/s}
		\eqreff{eq:lin_cv_sum}
		\lesssim
		\norm{u^\exact}_{\A_s}^{1/s} (\Eta_{\ell}^{k, j})^{-1/s}.
	\end{align*}
	Rearranging the terms, we obtain that
	\begin{subequations}\label{eq:Eta_approxclass}
		\begin{equation}
			(\# \TT_{\ell} - \# \TT_{0} + 1)^s \, \Eta_{\ell}^{k, j}
			\lesssim
			\norm{u^\exact}_{\A_s}
			\quad \text{for all} \quad
			\ell > 0.
		\end{equation}
		Trivially, full linear convergence proves that
		\begin{equation}
			(\# \TT_{\ell} - \# \TT_{0} + 1)^s \, \Eta_{0}^{k, j}
			=
			\Eta_{0}^{k, j}
			\lesssim
			\Eta_{0}^{0, 0}
			\quad \text{for} \quad
			\ell = 0.
		\end{equation}
	\end{subequations}
	Moreover, recall from~\cite[Lemma~22]{bhp2017} that, for all
	$\TT_\coarse \in \T$ and all $\TT_\fine \in \T(\TT_\coarse)$,
	\begin{equation}\label{eq:bhp-lemma22}
		\# \TT_\fine - \# \TT_\coarse +1
		\le \# \TT_\fine
		\le  \# \TT_\coarse \, (\# \TT_\fine - \# \TT_\coarse +1).
	\end{equation}
	Overall, we have thus shown that
	\begin{equation*}
		(\# \TT_{\ell})^s \Eta_\ell^{k,j}
		\eqreff{eq:bhp-lemma22}
		\lesssim
		(\# \TT_{\ell} - \# \TT_{0} + 1)^s \Eta_\ell^{k,j}
		\eqreff{eq:Eta_approxclass}
		\lesssim
		\max \{\norm{u^\exact}_{\A_s}, \Eta_{0}^{0,0}\}
		\
		\text{for all $(\ell, k, j) \in \QQ$.}
	\end{equation*}
	This concludes the proof of the upper bound in~\eqref{eq:proof:optimality}
	and hence that of~\eqref{eq:optimal_complexity}.
\end{proof}

\begin{proof}[\textbf{Proof of Lemma~\ref{lem:estimator_equivalence}}]
	Norm equivalence~\eqref{eq:energy}, \emph{a~posteriori} error
	control~\eqref{prop:linearization:eq2+}, and the stopping
	criterion~(\hyperref[eq:st_lin]{Alg4}) allow to estimate
	\begin{align}
		\label{eq1:proof:estimator_equivalence}
		\begin{split}
			\enorm{u_\ell^\star& - u_\ell^{\kk, \jj}}
			\eqreff*{eq:energy}
			\le
			(2/\alpha)^{1/2} \,
			\dist(u_\ell^\star, u_\ell^{\kk, \jj})^{1/2}
			\eqreff*{prop:linearization:eq2+}
			\le
			(2/\alpha)^{1/2} \,
			\Big(\frac{\qnrg}{1-\qnrg}\Bigr)^{1/2}\,
			\dist(u_\ell^{\kk, \jj}, u_\ell^{\kk-1, \jj})^{1/2}
			\\
			&\stackrel{\textup{(\hyperref[eq:st_lin]{Alg4})}}
			\le
			(2/\alpha)^{1/2} \, \Big(\frac{\qnrg}{1-\qnrg}\Bigr)^{1/2}
			\,
			\lambda_{\rm lin}^{1/2}
			\,
			\eta_{\ell}(u_\ell^{\kk, \jj})
			\eqreff{eq:lambdalin}
			=
			C_{\textup{solve}} \,
			\lambda_{\rm lin}^{1/2} \, \eta_{\ell}(u_\ell^{\kk, \jj}).
		\end{split}
	\end{align}
	For any \(\UU_\ell \subseteq \TT_\ell\), stability~\eqref{axiom:stability}
	then allows to obtain
	\begin{alignat*}{2}
		\eta_{\ell}(\UU_\ell; u_\ell^{\kk, \jj})
		\eqreff*{axiom:stability}
		\le
		\eta_{\ell}(\UU_\ell; u_\ell^\star)
		+ \Cstab \, \enorm{u_\ell^\star - u_\ell^{\kk, \jj}}
		&\eqreff*{eq1:proof:estimator_equivalence}
		\le
		\
		&&\eta_{\ell}(\UU_\ell; u_\ell^\star)
		+ C_{\textup{solve}} \, \Cstab \, \lambda_{\rm lin}^{1/2} \,
		\eta_{\ell}(u_\ell^{\kk, \jj})
		\\
		&\le
		&&\eta_{\ell}(\UU_\ell; u_\ell^\star)
		+
		\bigl(\lambda_{\rm lin} / \lambda_{\rm lin}^\star\bigr)^{1/2} \,
		\eta_{\ell}(u_\ell^{\kk, \jj}).
	\end{alignat*}
	Then, the lower bound
	of~\eqref{eq:estimator_equivalence} follows by taking
	$\UU_\ell = \TT_{\ell}$, $0 < \lambda_{\rm lin} < \lambda_{\rm lin}^\star$,
	and rearranging the latter estimate. Similarly, it follows
	\begin{align}\label{eq2:proof:estimator_equivalence}
		\medmuskip = 2mu
		\begin{split}
			\eta_{\ell}(\UU_\ell; u_\ell^\star)
			\eqreff*{axiom:stability}
			\le
			\eta_{\ell}(\UU_\ell; u_\ell^{\kk, \jj})
			+ \Cstab \, \enorm{u_\ell^\star - u_\ell^{\kk, \jj}}
			&\eqreff*{eq1:proof:estimator_equivalence}
			\le \
			\eta_{\ell}(\UU_\ell; u_\ell^{\kk, \jj})
			+ \Cstab \, C_{\textup{solve}} \, \lambda_{\rm lin}^{1/2} \,
			\eta_{\ell}(u_\ell^{\kk, \jj})
			\\
			&\le \
			\eta_{\ell}(\UU_\ell; u_\ell^{\kk, \jj})
			+
			\bigl(\lambda_{\rm lin} / \lambda_{\rm lin}^\star \bigr)^{1/2} \,
			\eta_{\ell}(u_\ell^{\kk, \jj}).
		\end{split}
	\end{align}
	For \(\UU_\ell = \TT_{\ell}\), this concludes the proof
	of~\eqref{eq:estimator_equivalence}. Suppose
	\(\RRR_{\ell} \subseteq \TT_\ell\)
	satisfies
	\(
		\thetamark^{1/2} \, \eta_{\ell}(u_\ell^\star)
		\le
		\eta_{\ell}(\RRR_{\ell}; u_\ell^\star)
	\).
	Arguing as for~\eqref{eq:estimator_equivalence}, we see
	\begin{align*}
		\bigl[
			1
			- \bigl(\lambda_{\rm lin} / \lambda_{\rm lin}^\star\bigr)^{1/2}
		\bigr] \,
		\thetamark^{1/2}
		&\eta_{\ell}( u_\ell^{\kk, \jj})
		\eqreff*{eq:estimator_equivalence}
		\le
		\thetamark^{1/2} \, \eta_{\ell}(u_\ell^\star)
		\le
		\eta_{\ell}(\RRR_{\ell}; u_\ell^\star)
		\\
		&\eqreff*{eq2:proof:estimator_equivalence}
		\le
		\eta_{\ell}(\RRR_{\ell}; u_\ell^{\kk, \jj})
		+ \bigl(\lambda_{\rm lin} / \lambda_{\rm lin}^\star\bigr)^{1/2} \,
		\eta_{\ell}(u_\ell^{\kk, \jj})
		\\
		&
		\eqreff*{eq:thetamark}
		=
		\eta_{\ell}(\RRR_{\ell}; u_\ell^{\kk, \jj})
		+
		\bigl\{
			\thetamark^{1/2} \,
			\bigl[
				1
				- \bigl(\lambda_{\rm lin} / \lambda_{\rm lin}^\star\bigr)^{1/2}
			\bigl]
			- \theta^{1/2}
		\bigr\} \,
		\eta_{\ell}(u_\ell^{\kk, \jj}).
	\end{align*}
	This yields
	\(
		\theta \, \eta_{\ell}(u_\ell^{\kk, \jj})^2
		\le
		\eta_{\ell}(\RRR_{\ell}; u_\ell^{\kk, \jj})^2
	\)
	and concludes the proof.
\end{proof}

\end{document}